\title{Two multivariate quadratic transformations of elliptic hypergeometric integrals}
\author{Fokko J. van de Bult}
\newtheorem{thm}{Theorem}[section]
\newtheorem{prop}[thm]{Proposition}
\newtheorem{lemma}[thm]{Lemma}
\newtheorem{cor}[thm]{Corollary}
\begin{document}
\begin{abstract}
Eric Rains conjectured several quadratic transformations between multivariate elliptic hypergeometric functions in \cite{RainsLittlewood}, with the integrand multiplied by interpolation functions. In this article two of these conjectures are proven in the case where the interpolation functions are constant, and we obtain a third conjecture as a corollary. Two other equations for elliptic Selberg integrals with 10 parameters, two of which multiplying to $pq/t$, are given, as they are needed in the proof. The proofs consist essentially of a calculation which strings together many elliptic Dixon transformations. Some remarks are made about using Fubini in cases in which product contours do not exist.
\end{abstract}

\maketitle

In this paper we consider quadratic elliptic hypergeometric transformations. Elliptic hypergeometric functions are generalizations of (basic) hypergeometric functions, which have been studied since the beginning of this century. The elliptic hypergeometric series are defined as series in which the quotient $a_{n+1}/a_n$ of subsequent terms is an elliptic function of $n$ (as opposed to a rational function of $n$ for ordinary hypergeometric series). We are interested in the associated multivariate integrals, which are generalizations of the beta integral.

The elliptic hypergeometric functions depend on two parameters, $p$ and $q$, just as basic hypergeometric functions depend on $q$. The quadratic nature of the transformations discussed here is the fact that the $p$ and $q$ on both sides of the equation are related by a quadratic equation. We prove one equation relating a $(p,q)$-elliptic hypergeometric function with a $(p,q^2)$-elliptic hypergeometric function, and another one relating a $(p,q)$-elliptic hypergeometric function with a $(\sqrt{p}, \sqrt{q})$-elliptic hypergeometric function.  Univariate series analogues of the equations we consider are \cite[Theorem 5.1]{SpiBailey} (for our Theorem \ref{thm71}) and \cite[Theorem 4.2]{Warnaar3} (for Theorem \ref{thmq5}). Other quadratic formulas for elliptic hypergeometric series were given  in \cite{Warnaar1},  \cite{Warnaar2}, \cite{GasperSchlosser}, \cite{ChuJia1}, and \cite{ChuJia2}. 
To the best of the author's knowledge, \textit{multivariate} quadratic transformations as given here, which are not direct consequences of univariate quadratic transformation, had not been discovered even in the basic hypergeometric case.

The specific elliptic hypergeometric functions in question are elliptic Selberg integrals with ten parameters, two of which multiply to $pq/t$ (where $t$ is the parameter describing the cross terms in the elliptic Selberg integral). Elliptic Selberg integrals are an $n$-variate generalization of the elliptic beta integral \cite{Spibeta}. The six parameter Selberg integral satisfies an evaluation formula, and the eight parameter Selberg integral satisfies transformation formulas with the Weyl group of type $E_7$ as symmetry group, see \cite{Rainstrafo}. In particular, the Selberg integrals we study are slightly beyond those for which we know interesting equations.

The quadratic equations were conjectured by Eric Rains \cite[Conjectures Q3 and Q7]{RainsLittlewood}, together with 5 other quadratic equations. He managed to prove the univariate version of all of his conjectures, and most conjectures also for other small values of $n$, though not the ones under discussion in this article. His conjectures are more general in that the integrands are multiplied with certain interpolation functions, and we only prove them when these interpolation functions are constant. Unfortunately, apart from the fact that our equations imply the general conjectures for $n\leq 3$ our method of proof does not seem to be extendable to this more general case. However, as a bonus we obtain \cite[Conjectures Q1]{RainsLittlewood} in the case the interpolation function is constant, as a corollary to the second quadratic transformation we prove. 

As an intermediate step in the proofs we show two other transformation formulas for 
arbitrary Selberg integrals with ten variables, two of which multiplying to $pq/t$. The first one is the ``induction enabler'' transformation, Proposition \ref{propbluebox}, which equates an $n$-variate integral with a double integral of an $(n-1)$-variate integral and a univariate integral. This $(n-1)$-variate integral is again a Selberg integral of the same type, so this proposition lends itself perfectly for inductive arguments. 

The second transformation, Proposition \ref{propas}, relates such an $n$-dimensional Selberg integral (again ten parameters, two multiplying to $pq/t$) with the double integral of an $n$-variate integral with a univariate integral. The eight parameters of the $n$-variate integral which are not in the pair multiplying to $pq/t$ are transformed in a way very reminiscent of the Selberg transformation for the eight variable Selberg integral. Therefore we have dubbed this transformation the almost-Selberg transformation. 

All the proofs in this article essentially consist of stringing together a long list of known elliptic hypergeometric transformations, intertwined with applications of Fubini (to change the order of integration). This has more often been seen to be fruitful method of obtaining new interesting formulas. For example the univariate version of the elliptic Selberg transformation \cite{Spitheta} can most easily be proven in this way. Other examples include \cite{vdBC1} and \cite{vdBWF4}. One tricky issue in these proofs is always determining whether Fubini is actually applicable. The usual way to see this is by first choosing the parameters correctly so that all integration contours can be chosen to be the unit circle (thus constant), in which case Fubini is clearly valid, and then using analytic extension to obtain the result for general parameters. However, in this article we are unable to do so, and thus we devote an entire section on discussing the application of Fubini on integrals of meromorphic functions. This generalization of Fubini should prove useful in other applications as well.

The article is organized as follows. We start with a section on notation. 
Subsequently we discuss the two multivariate elliptic hypergeometric beta integrals, the already mentioned Selberg integral, and the Dixon integral, another generalization of the elliptic beta integral. This is followed by a section devoted to extending Fubini's theorem. Sections \ref{secbluebox} and \ref{secas} give the induction enabler transformation, respectively the almost-Selberg transformation. The final two sections are devoted to stating and proving the actual quadratic transformations.

\subsection*{Acknowledgements} I would like to thank Eric Rains for our many discussions, especially those regarding the application of Fubini.

\section{Notation}
We define the elliptic Pochhammer symbols and the elliptic gamma function as usual
\[
(x;p,q) = \prod_{r,s\geq 0} 1-p^r q^s x,\qquad 
\Gamma_{p,q}(x) = \frac{(pq/x;p,q)}{(x;p,q)}.
\]
The infinite product converges if $|p|, |q|<1$, so we will assume this throughout the paper. 
We use the standard notation for products of these terms, thus for example
\[
\Gamma_{p,q}(a_1,\ldots,a_n) = \prod_{k=1}^n \Gamma_{p,q}(a_k), \qquad 
\Gamma_{p,q}(a y^{\pm 1} z^{\pm 1}) = \Gamma_{p,q}(ayz,ay/z,az/y,a/yz)
\]
The reflection equation 
\[
\Gamma(z, pq/z)=1
\]
will be used very often to simplify terms.

Finally we use the following notation for the parameter independent part of the different integrands. 
\begin{align*}
\Delta_{I;p,q}^{(n)}(z) &:= \frac{(p;p)^n (q;q)^n}{2^n n!} \frac{1}{\prod_{1\leq j<k\leq n}\Gamma_{p,q}(z_j^{\pm 1}z_k^{\pm 1}) \prod_{j=1}^n \Gamma_{p,q}(z_j^{\pm 2})} \prod_{j=1}^n \frac{dz_j}{2\pi i z_j}, \\
\Delta_{I\!I;p,q}^{(n)}(t;z) &:=\frac{(p;p)^n (q;q)^n\Gamma_{p,q}(t)^n}{2^n n!}  
\prod_{1\leq j<k\leq n} \frac{\Gamma_{p,q}(tz_j^{\pm 1}z_k^{\pm 1})}
{\Gamma_{p,q}(z_j^{\pm 1}z_k^{\pm 1})} \prod_{j=1}^n \frac{1}{  \Gamma_{p,q}(z_j^{\pm 2})} \prod_{j=1}^n \frac{dz_j}{2\pi i z_j}.
\end{align*}
To simplify notation we will generally omit the $p,q$ from these functions. However, as quadratic transformations intrinsically imply that we also have to deal with other values for $p$ and $q$, we will use the notations
\[
\tilde \Gamma= \Gamma_{p,q^2}, \qquad \hat \Gamma = \Gamma_{\sqrt{p},\sqrt{q}},
\]
and similarly for other functions (such as $\Delta_I^{(n)}$ and $\Delta_{II}^{(n)}$).

For the part which does depend on parameters we recognize that we always have permutation symmetry, which allows us to use the abbreviations 
\[
\Gamma(x z^{\pm 1}) = \prod_{j=1}^n \Gamma(xz_j^{\pm 1}), \qquad 
\Gamma(x y^{\pm 1} z^{\pm 1}) = \prod_{j=1}^n \prod_{k=1}^m \Gamma( xy_k^{\pm 1}z_j^{\pm 1}), 
\]
if we have $n$ parameters $z_j$ and $m$ parameters $y_k$.

\section{Two types of elliptic hypergeometric beta integrals}
When restricted to a single set of integration variables, the integrals that we consider will always be of one of two forms. They will either be Dixon or Selberg integrals (in the notation of \cite{Rainstrafo} they were called of type $I$ or type $I\!I$). 

\subsection{The Dixon integral}
The Dixon integral is an integral of the form (for parameters $t_r\in \mathbb{C}^*$)
\begin{equation}\label{eqdefid}
I_D(t_r) := \int \Delta_I^{(n)}(z) \prod_{r=1}^{2k} \Gamma( t_r z^{\pm 1}),
\end{equation}
where the parameters $t_r$ satisfy the balancing condition $\prod_r t_r = (pq)^{k-n-1}$. 
The contour for this integral is the $n$-fold product of  the unit circle if all $|t_r|<1$. The only interesting instances of this integral have $k\geq n+2$, so in those cases this condition defines a non-empty open subset of the space of parameters. Thus the Dixon integral $I_D(t_r)$ is a holomorphic function of the parameters in this set. It turns out we can extend this function analytically to a meromorphic function on $(\mathbb{C}^*)^{2k}$. 


Whenever a Dixon integral appears in this article we will interpret it as the meromorphic extension of the integral discussed here, specialized at some values for the $t_r$. In particular we will not specify any contours. The same will be true for the Selberg integral discussed next.

\subsection{The Selberg integral}
The Selberg integral, for parameters $t_r\in \mathbb{C}^*$ satisfying the balancing condition 
$t^{2(n-1)} \prod_r t_r = (pq)^{k-2}$ is given by 
\begin{equation}\label{eqdefis}
I_S(t;t_r) := \int \Delta_{I\!I}^{(n)}(t;z) \prod_{r=1}^{2k} \Gamma(t_r z^{\pm 1}).
\end{equation}
The integration contour can once again be chosen to be the $n$-fold product of the unit circle if $|t|<1$ and $|t_r|<1$ for all $r$. This integral  can also be extended analytically, in this case to a meromorphic function for $(t,t_r) \in (\mathbb{C}^*)^{2k+1}$. 


\subsection{Known transformations}
We recall the elliptic Dixon transformation \cite[Theorem 3.1]{Rainstrafo}. Let $k=m+n+2$. Under the balancing condition $\prod_{r=1}^{2k} t_r = (pq)^{m+1}$ we have
\[
\int_{C^n} \Delta_I^{(n)}(z;p,q)  \prod_{r=1}^{2k} \Gamma_{p,q}(t_r z^{\pm 1}) 
= 
\prod_{1\leq r<s\leq 2k} \Gamma_{p,q}(t_rt_s) 
\int_{C^m} \Delta_I^{(m)}(z;p,q) \prod_{r=1}^{2k} \Gamma_{p,q}(\frac{\sqrt{pq}}{t_r} z^{\pm 1}),
\]
as an equation between meromorphic functions. The case $m=0$ results in the evaluation formula 
\[
\int_{C^n} \Delta_I^{(n)}(z;p,q) \prod_{r=1}^{2(n+2)} \Gamma_{p,q}(t_r z^{\pm 1})
= \prod_{1\leq r<s\leq 2(n+2)} \Gamma_{p,q}(t_rt_s),
\]
for parameters $t_r$ ($1\leq r\leq 2(n+2)$) satisfying the balancing condition $\prod_r t_r =pq$.
The univariate version of this evaluation is the famous elliptic beta integral evaluation \cite{Spibeta}.

Another important transformation is the elliptic Selberg transformation \cite[Theorem 9.7]{Rainstrafo}. Let $t_r\in \mathbb{C}^*$ satisfy the balancing condition $t^{2(n-1)} \prod_{r=1}^8 t_r=(pq)^2$ then 
\begin{multline}\label{eqtrafo2}
\int \Delta_{II}^{(n)}(t;z) \prod_{r=1}^8 \Gamma(t_r z^{\pm 1}) \\ =
\prod_{j=0}^{n-1} \prod_{1\leq r<s\leq 4} \Gamma(t^j t_rt_s) \prod_{5\leq r<s\leq 8} \Gamma(t^j t_rt_s)
\int \Delta_{II}^{(n)}(t;z) \prod_{r=1}^4 \Gamma(t_rsz^{\pm 1}) \prod_{r=5}^8 \Gamma(\frac{t_r}{s} z^{\pm 1}),
\end{multline}
where $s^2 = \frac{pq}{t^{n-1} t_1t_2t_3t_4}=\frac{t^{n-1}t_5t_6t_7t_8}{pq}$. We will only be using this transformation in the univariate case, which was already given in \cite{Spitheta}. 
We can iterate this transformation, in which case we obtain that the associated symmetry group of this transformation equals the Weyl group of type $E_7$.

A third transformation was first conjectured in an earlier version of \cite{RainsLittlewood} and was proven by the author in \cite{vdBC1}. Under the balancing condition $\prod_{r=1}^4 t_r=t^{2+m-n}$ we have 
\begin{multline}\label{eqtrafo3}
\int \Delta_{II}^{(n)}(t;z) \prod_{r=1}^4 \Gamma(t_rz^{\pm 1})
\prod_{r=1}^{m+n} \Gamma(\sqrt{\frac{pq}{t}} v_r z^{\pm 1}) 
\\ = 
\prod_{r=1}^4 \prod_{s=1}^{m+n} \Gamma(\sqrt{\frac{pq}{t}} t_r v_s^{\pm 1})
\prod_{i=m+1}^n \prod_{1\leq r<s\leq 4} \Gamma(t^{n-i}t_rt_s)
\int \Delta_{II}^{(m)}(t;z) \prod_{r=1}^4 \Gamma(\frac{t}{t_r} z^{\pm 1})
\prod_{r=1}^{m+n} \Gamma(\sqrt{\frac{pq}{t}} v_r z^{\pm 1}).
\end{multline}

\subsection{Notational remarks}
The proofs in this article contain long strings of identities between different integrals, which are applications of the three transformation given above and a few propositions proven below. In order to save space we use an abbreviated notation to describe why these identities hold. In particular we write 
\begin{itemize}
\item $\stackrel{var}{=}$ if we apply a Dixon transformation in the variable $var$;
\item $\stackrel{W(E_7):var}{=}$ if we apply a Selberg transformation, or the iterated version of the Selberg transformation, in the variable $var$;
\item $\stackrel{{\color{blue} \boxtimes}:var}{=}$ if we apply the induction enabler transformation, Proposition \ref{propbluebox};
\end{itemize}
We will also identify $\Delta_{I\!I}^{(1)}(t;z) = \Gamma(t) \Delta_I^{(1)}(z)$ without further comments, and use balancing conditions to simplify terms where appropriate. In the calculations themselves we will interchange the order of integration as desired, but at the end of the proofs we will always quickly discuss why it was appropriate in the calculation at hand.

\section{Remarks on Fubini}\label{secFubini}
In this section we consider multiple integrals of meromorphic functions in several variables, over closed contours. When considering such a multiple integral, in principal one can allow the contour of the inner integral (say with respect to the variable $z_1$) to depend on the variable of the outer integral (say $z_2$). Since the integrand is meromorphic we have considerable freedom in choosing what the contour should be, without changing the value of the integral, in view of Cauchy's integral theorem. If we choose the inner contour such that it does not depend on $z_2$ then we can use Fubini's theorem to interchange the order of integration (note that we integrate a continuous function over a compact set, so it is surely measurable with finite absolute integral). Unfortunately, it is not always possible to choose the inner contour independent of $z_2$, as this might involve having to move the contour over some poles of the integrand. In this section we will consider this situation and show that in many cases we can still apply Fubini. The discussion here is focused on $BC$-symmetric elliptic hypergeometric integrals, but the ideas can easily be extended to a more general setting.

The integrals we consider are of the following form
\[
\int_{C^n} \frac{hol(z)}{\prod_{r,i} (t_{ri} z_i^{\pm 1};p,q) \prod_{r,i,j} (u_{rij}z_i^{\pm 1}z_j^{\pm 1};p,q)} \prod_{j=1}^n \frac{dz_j}{2\pi i z_j},
\]
where the products are over some arbitrary number of terms, and $hol(z)$ is some holomorphic function. If all $|t_{ri}|<1$ and all $|u_{rij}|<1$ we can take all contours to be the unit circle. In particular, we can apply Fubini in this case and the integral takes the same value regardless of the order of integration. As a function of the $t_{ri}$ and the $u_{rij}$ we can analytically extend the integral to a meromorphic function for $t_{ri} \in \mathbb{C}^*$ and $u_{rij} \in \mathbb{C}^*$. As an equation between meromorphic function the integrals with different orders of integration are still equal to each other, so Fubini is still valid.

However, there is a complication. In particular, we are not interested in these integrals where all variables are independent, but we want to specialize these variables in a certain way. A meromorphic function is, locally, a quotient of two holomorphic functions, and specializing it can be delicate in the case this specialization becomes $\frac{0}{0}$. Even in the case where the order of the pole equals the order of the zero in a point, it may be ill-defined. For example, specializing $\frac{w^2+z^2}{w^2-z^2}$ in $w=z=0$ gives different results depending on whether we first set $w=0$ and then $z=0$, or first $z=0$ and then $w=0$, or first $w=iz$ and then $w=0$, etc. In particular, if two functions are equal as meromorphic functions, we can only say that the specialization of these two functions are equal, if this specialization avoids the zero-divisor of the respective denominators. It thus becomes vital to explicitly write the integrals as the quotient of two holomorphic functions, so we can determine if this denominator vanishes in the specialization.

The following lemma considers the analytic properties of the integrals we want to consider. In practice we expect they have many fewer poles than what the lemma predicts, but for our purposes this lemma sufficiently restricts their locations. The reader should compare this result to the stronger, but more specialized, statements in \cite[Theorem 10.6 and 10.7]{Rainstrafo}.
\begin{lemma}\label{lemmero}
Let $F$ be defined for $z_i \in \mathbb{C}^*$ ($1\leq i\leq n$), 
$0<|p|,|q|<1$ with parameters
$t_{ri}\in \mathbb{C}^*$ (for $1\leq r\leq R_i$, we might have no $t_{ri}$ for some values of $i$) and $u_{rij} \in \mathbb{C}^*$ (for $1\leq r\leq U_{ij}$ and $1\leq i<j\leq n$, for some $i,j$ we might have $U_{ij}=0$ and we write $u_{rij}=u_{rji}$ if $i>j$). Suppose the function
\[
F(z_i;t_{ri}; u_{rij}) 
\prod_{r,i} (t_{ri} z_i^{\pm 1};p,q) \prod_{r,i<j} (u_{rij}z_i^{\pm 1}z_j^{\pm 1};p,q)
\]
is holomorphic on the domain  $0<|p|,|q|<1$, $z_i, t_{ri}, u_{rij} \in \mathbb{C}^*$. 

To these data we assign a graph $G$ with multiple edges and half-edges (i.e., edges with only one side attached to a vertex). The vertex set is $\{1,2,\ldots,n\}$, the number of edges between $i$ and $j$ is $U_{ij}$ and there are $R_i$ half-edges attached to vertex $i$. The edges are labeled with the $u_{rij}$, and the half-edges with $t_{ri}$. 

Then the function defined for $|t_{ri}|<1$ and $|u_{rij}|<1$ by 
\[
\prod_{\lambda} (w(\lambda);p,q) \prod_{\mu} (w(\mu),pw(\mu),qw(\mu),pqw(\mu);p,q)^2 \int_{C^n} F \prod_{j=1}^n  \frac{dz_j}{2\pi i z_j}
\]
where $C$ is the unit circle, extends uniquely to a holomorphic function on the domain 
$t_{ri}, u_{rij} \in \mathbb{C}^*$ and $|p|, |q|<1$. 

Here the product over $\lambda$ is over all paths in $G$  of length at most $2\cdot 3^{n-1}$ which start and end at an half-edge. The product of the labels of the (half-)edges in the path is $w(\lambda)$. Note that such a path contains exactly two half-edges, possibly the same ones. Also note that, unless the path is symmetric, we count a path twice: once in each direction. 
Here the length of the path is the total number of half-edges and edges in the path, counting multiplicity and counting half-edges as 1 (not $\frac12$).

The product over $\mu$ is over all closed paths in $G$ of length at most $4 \cdot 3^{n-2}$. Note that such a closed path can only contain edges, so $w(\mu)$ is a product of $u_{rij}$'s. We count the paths multiple times, once for each direction and starting point.

Away from the zeros of the prefactor, this function can be obtained by integrating the integrand over contours (possibly disconnected) which are deformations of the unit circle (in principle we have different contours for different integration variables, with the contours for the inner integrals depending on the value of the outer variables)\footnote{We can describe exactly the homology class these contours should be contained in, but refrain from doing so to simplify the exposition.}.
\end{lemma}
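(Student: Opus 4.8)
The plan is to prove both assertions by induction on the number of integration variables $n$, using contour deformation together with the elementary principle that the only obstruction to holomorphic continuation of a contour integral of a meromorphic family of integrands is the \emph{pinching} of the contour between two sequences of poles that are forced to lie on opposite sides of it. First I would settle the univariate base case $n=1$, where $G$ has a single vertex, half-edges $t_{r1}$ and no edges, so there are no closed paths and the admissible paths $\lambda$ of length $\le 2$ are exactly the pairs $(t_{r1},t_{s1})$ with $w(\lambda)=t_{r1}t_{s1}$. The factor $(t_{r1}z_1^{\pm1};p,q)$ produces an inner sequence of poles at $z_1=t_{r1}p^aq^b$ and an outer sequence at $z_1=p^{-a}q^{-b}/t_{r1}$ ($a,b\ge0$); for $|t_{r1}|<1$ the former lie inside and the latter outside the unit circle, and the integral continues holomorphically as long as the contour can be kept separating the two families. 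Continuation can fail to be holomorphic only when an inner pole from $t_{r1}$ collides with an outer pole from $t_{s1}$, i.e. when $t_{r1}t_{s1}p^aq^b=1$; these are precisely the zeros of $(w(\lambda);p,q)$, and the resulting singularity is a simple pole, so multiplying by $\prod_\lambda(w(\lambda);p,q)$ removes all of them.

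For the inductive step I would integrate out one variable, say $z_n$, first. For fixed $z_1,\dots,z_{n-1}$ the inner integral is of univariate type, but its effective half-edge parameters are now the $t_{rn}$ together with the edge contributions $u_{rnj}z_j^{\pm1}$ incident to $z_n$. Applying the base case makes the inner integral holomorphic after multiplication by $(\cdot;p,q)$ over all pairs of these effective parameters; a pair built from two edge contributions $u_{rnj}z_j^{\pm1}$, $u_{r'nj'}z_{j'}^{\pm1}$ yields a factor depending on the outer variables, which is exactly a new half-edge- or edge-type denominator for the $(n-1)$-variate integral in $z_1,\dots,z_{n-1}$. In graph terms, a pinch chain that enters vertex $n$ along one incident (half-)edge must leave along another, so a path terminating at $n$ continues into $G\setminus\{n\}$, while a chain that closes back onto a previously visited vertex yields a closed path $\mu$. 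Feeding the new denominators into the induction hypothesis for $G\setminus\{n\}$ turns open paths through $n$ into open paths of $G$ and cyclic chains into the $\mu$. Because each incidence of a variable carries the reflection $z_i\leftrightarrow z_i^{-1}$, transferring a pinch across a vertex multiplies the number of continuing pole families by a bounded factor; tracking this through the $n$ reductions produces the bounds $2\cdot3^{n-1}$ for open paths and $4\cdot3^{n-2}$ for closed paths, beyond which no new pinch can arise. The double factor $(w(\mu),pw(\mu),qw(\mu),pqw(\mu);p,q)^2$ reflects that a cyclic pinch is a \emph{double} pinch: the reflection symmetry lets the loop be traversed in both orientations, so the integral acquires a pole of order two there and the square supplies the matching double zero.

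For the contour statement I would observe that away from the zeros of the prefactor there is, by construction, no pinching, so the unit circle can be deformed (allowing several disjoint components once poles have crossed) into a contour that separates the inner and outer pole sequences of every variable, with the inner contours depending on the outer variables. Integrating over such a contour reproduces the continued holomorphic function, by Cauchy's theorem and holomorphic dependence on the parameters.

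The main obstacle I expect is the combinatorial bookkeeping in the inductive step: rigorously matching every pinching configuration of the $n$-fold integral with a bounded-length (half-)path in $G$, proving the length bounds, and in particular justifying the factor $3$, and verifying that open-path pinches contribute only simple poles while cyclic pinches contribute exactly double poles, so that the single and squared $(\cdot;p,q)$ factors cancel them precisely. Controlling the residues of the shifted pole families $p^aq^b$ without over- or under-counting zeros is the delicate part; the lemma deliberately overestimates the polar locus precisely to keep this bookkeeping tractable.
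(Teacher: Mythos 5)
Your overall strategy is the paper's: the base case is exactly \cite[Corollary 10.3]{Rainstrafo} (which you re-derive via contour pinching rather than cite), and the inductive step peels off one variable and feeds the resulting parameter-dependent denominators into the induction hypothesis for the graph with one vertex removed, matching prefactor terms to paths. The only structural difference is the order: the paper applies the induction hypothesis to the \emph{inner} $(n-1)$-fold integral with $z_n$ as a parameter and then uses the univariate corollary on the outer $z_n$-integral, whereas you integrate $z_n$ innermost; both orders can be made to work.

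However, the proposal has a concrete gap precisely where the paper's proof does its real work. When the univariate step pairs two edge contributions incident to the removed vertex that involve the \emph{same} outer variable with the same sign, the resulting denominator is of the form $(uu'z_j^{\pm 2};p,q)$. This is neither a half-edge-type denominator $(tz_j^{\pm 1};p,q)$ nor an edge-type denominator, so the induction hypothesis cannot be applied to the outer integral as you describe; one must first decompose
\[
(xz^{\pm 2};p,q)=(\pm\sqrt{x}\,z^{\pm 1},\pm\sqrt{px}\,z^{\pm 1},\pm\sqrt{qx}\,z^{\pm 1},\pm\sqrt{pqx}\,z^{\pm 1};p,q)
\]
into eight half-edge-type factors. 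It is the subsequent recombination of two such square-root half-edges that produces, purely algebraically, the factors $(w(\mu),pw(\mu),qw(\mu),pqw(\mu);p,q)^2$ for closed paths; your explanation of the square as a ``double pinch'' giving a double pole is a heuristic that the lemma neither asserts nor needs (the prefactor deliberately overcounts), and it does not account for the $p$-, $q$- and $pq$-shifted terms at all. Relatedly, the length bounds $2\cdot 3^{n-1}$ and $4\cdot 3^{n-2}$ are not a matter of ``tracking a bounded factor'': the factor $3$ arises from the specific worst case in which a path runs from a half-edge to the removed vertex along $\lambda$, traverses a loop $\mu$ based there, and returns along $\lambda$, so that $\lambda$ is counted twice and $\mu$ once. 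You explicitly defer this bookkeeping as ``the main obstacle,'' but it, together with the square-root device, is essentially the entire proof; as written the inductive step does not close.
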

In particular we see that we can apply Fubini whenever, for each path in the graph (either closed or ending in two half-edges), the product over the labels of the edges is not contained in $p^{\mathbb{Z}_{\leq 0}} q^{\mathbb{Z}_{\leq 0}}$. The remainder of this article is filled with examples in which this is true, so to emphasize the delicacy of this issue, we now give an example in which Fubini fails. Consider, under the balancing conditions $t^2v_1v_2=1$ and $t^2u_1u_2u_3u_4=pq$ the double integral
\[
\iint \Delta_I^{(1)}(y) \Delta_I^{(1)}(z) \Gamma(ty^{\pm 1}z^{\pm 1}) \prod_{r=1}^4 \Gamma(u_rz^{\pm 1}) \prod_{r=1}^2 \Gamma(v_ry^{\pm 1})
\]
If we first integrate over $y$, the inner integral equals 0 identically, irrespective of the value of $z$ (essentially it is a special case of the elliptic beta integral where two parameters multiply to $pq$). Integrating the function 0 over the variable $z$ gives the value 0 for the double integral. On the other hand first integrating to $z$ leads (using the elliptic beta integral evaluation) to 
\[
\Gamma(t^2) \prod_{1\leq r<s\leq 4} \Gamma(u_ru_s)
\int \Delta_I^{(1)}(y) \prod_{r=1}^4 \Gamma(tu_r y^{\pm 1}) \prod_{r=1}^2 \Gamma(v_ry^{\pm 1})
= \Gamma(t^{\pm 2}) \prod_{r=1}^4 \Gamma(tu_rv_1,tu_rv_2).
\]
In particular the equality fails. The problematic path in this case is the path using the edges labeled $v_1 \to t \to t \to v_2$, which passes through the vertices $y\to z\to y$.

\begin{proof}[Proof of Lemma \ref{lemmero}]
The proof is based on iterating the result from \cite[Corollary 10.3]{Rainstrafo}, where we set all $t_r$'s equal to $u_r$'s (in loc. cit.). Let us restate that corollary in our situation. Already in this case it should be noted that we can reduce the possible number of poles significantly by assuming a balancing condition, see \cite[Lemma 10.4]{Rainstrafo}.
\begin{cor}[Corollary 10.3 from \cite{Rainstrafo}]
Let $\Delta(z;t_r;p,q)$ be a function such that
\[
\Delta(z;t_r;p,q) \prod_{r} (t_r z^{\pm 1};p,q) 
\]
is holomorphic on $z,t_r\in \mathbb{C}^*$ and $0<|p|,|q|<1$. Then 
\[
G(t_r) = \prod_{r,s} (t_rt_s;p,q) \int_{C} \Delta(z;t_r;p,q) \frac{dz}{2\pi i z}
\]
extends uniquely to a holomorphic function on $t_r\in \mathbb{C}^*$, $0<|p|,|q|<1$. (In the product $\prod_{r,s}$ we include the term $(t_1^2;p,q)$ and include $(t_1t_2;p,q)$ twice, once for $r=1$, $s=2$ and once for $r=2$, $s=1$). Away from the zeros of the prefactor this extension can be obtained by integrating the integrand over any contour which contains the points $p^{\mathbb{Z}_{\geq 0}}q^{\mathbb{Z}_{\geq 0}} t_r$ and excludes their reciprocals.
\end{cor}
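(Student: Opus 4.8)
The plan is to prove the corollary by the standard pinching (contour-deformation) argument for $BC_1$-symmetric elliptic integrals, tracking exactly which collisions of poles create singularities of the continuation and checking that the prefactor $\prod_{r,s}(t_rt_s;p,q)$ cancels each of them. First I would establish holomorphy on the base region $\{|t_r|<1\}$. Writing $hol(z):=\Delta(z;t_r;p,q)\prod_r (t_rz^{\pm 1};p,q)$, which is holomorphic on $z,t_r\in\mathbb{C}^*$, $0<|p|,|q|<1$ by hypothesis, the only $z$-poles of $\Delta(z)/z$ on $\mathbb{C}^*$ lie at the zeros of $\prod_r(t_rz^{\pm1};p,q)$, i.e.\ on the two reflected geometric progressions
\[
P_r^+=\{t_rp^aq^b:a,b\ge 0\},\qquad P_r^-=\{(t_rp^aq^b)^{-1}:a,b\ge 0\}.
\]
When all $|t_r|<1$, every point of $\bigcup_r P_r^+$ has modulus $\le|t_r|<1$ and every point of $\bigcup_r P_r^-$ has modulus $\ge 1/|t_r|>1$, so the unit circle $C$ separates them. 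The integrand is jointly holomorphic in $(z,t_r,p,q)$ in a neighborhood of $C$, whence $\int_C\Delta\,\frac{dz}{2\pi iz}$ is holomorphic in $(t_r,p,q)$ there (Morera together with differentiation under the integral sign), and since $\prod_{r,s}(t_rt_s;p,q)$ is entire in the $t_r$ for fixed $|p|,|q|<1$, the product $G$ is holomorphic on the base region.

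Next I would continue analytically by deforming the contour. Fixing a target $(t_r^\circ)$ and joining it to the base region by a path, I deform $C$ along the path to a contour $C_t$ that continues to enclose $\bigcup_r P_r^+$ and to exclude $\bigcup_r P_r^-$. By Cauchy's theorem $\int_{C_t}$ agrees with $\int_C$ where both are defined, and it furnishes a holomorphic continuation of $G$ wherever such a separating contour exists, namely whenever $\bigcup_r P_r^+\cap\bigcup_s P_s^-=\emptyset$. This is precisely the contour prescription in the statement (enclose $p^{\mathbb{Z}_{\ge0}}q^{\mathbb{Z}_{\ge0}}t_r$, exclude reciprocals), so the final assertion of the corollary falls out of the construction itself.

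Third, I would locate and remove the poles. A separating contour fails to exist exactly when some point of $P_r^+$ meets some point of $P_s^-$, i.e.\ $t_rp^aq^b=(t_sp^cq^d)^{-1}$, equivalently
\[
t_rt_s=p^{-(a+c)}q^{-(b+d)}\in p^{\mathbb{Z}_{\le0}}q^{\mathbb{Z}_{\le0}},
\]
which is exactly the zero set of $(t_rt_s;p,q)$; the case $r=s$ gives $t_1^2\in p^{\mathbb{Z}_{\le0}}q^{\mathbb{Z}_{\le0}}$, cancelled by the included factor $(t_1^2;p,q)$. Near such a collision I would split $C_t$ into a fixed separating contour plus a small circle around the offending pole: the fixed-contour integral stays holomorphic, while the residue contribution carries a simple pole in the variable $1-p^{a+c}q^{b+d}t_rt_s$ coming from the simple zero of the Pochhammer in the denominator. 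As $(t_rt_s;p,q)$ has a simple zero at each such point, and each collision value is matched by exactly one factor of the (doubled, $r=s$-included) product, multiplying by $\prod_{r,s}(t_rt_s;p,q)$ removes every pole. Hence $G$ extends across the entire collision locus and thus to all $t_r\in\mathbb{C}^*$, $0<|p|,|q|<1$, with uniqueness immediate from the identity theorem.

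The main obstacle I expect is the residue bookkeeping at the collisions: verifying that the pole of the continued integral is exactly simple and exactly cancelled, even when several poles collide simultaneously (the same value of $t_rt_s$ realized by several quadruples $(a,b,c,d)$, or several pairs $(r,s)$ coinciding), so that no uncancelled higher-order pole survives and the doubling convention in $\prod_{r,s}$ produces precisely the right multiplicities. One must also record the routine facts that the progressions $P_r^\pm$ accumulate only at $0$ and $\infty$, hence never on a compact separating contour, and that the target can always be reached by a path avoiding the (proper analytic) collision locus, so that the deformation is carried out without obstruction.
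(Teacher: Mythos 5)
The paper does not actually prove this statement: it is quoted verbatim, inside the proof of Lemma \ref{lemmero}, as Corollary 10.3 of Rains \cite{Rainstrafo}, so there is no internal proof to compare against, and the relevant benchmark is Rains's original argument. Your pinching proof is precisely that standard route: holomorphy on $\{|t_r|<1\}$ with the unit circle, continuation by deforming to any contour separating $p^{\mathbb{Z}_{\geq 0}}q^{\mathbb{Z}_{\geq 0}}t_r$ from the reciprocal progressions (which also yields the final assertion of the corollary), and cancellation of the pinch singularities, supported on $t_rt_s\in p^{\mathbb{Z}_{\leq 0}}q^{\mathbb{Z}_{\leq 0}}$, by the Pochhammer prefactor. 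In outline this is correct and is essentially the proof in the cited source.

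Two remarks on the step you flag as the main obstacle. First, your phrase ``each collision value is matched by exactly one factor'' slightly misreads the statement: for $r\neq s$ the product $\prod_{r,s}$ contains $(t_rt_s;p,q)$ \emph{twice}, so the prefactor vanishes to order two along each component $t_rt_s=p^{-N}q^{-M}$, and what you must prove is only that the pole of the continued integral has order at most two there (at most one for $r=s$, where the factor occurs once). Second, the multiplicity bookkeeping you defer is in fact harmless: writing the continuation near the divisor as an integral over a fixed contour plus finitely many explicit residue terms at the crossed points $z=t_rp^aq^b$ (respectively their reciprocals), each residue term contains exactly one vanishing denominator factor of the form $(t_rt_sp^{a+c}q^{b+d};p,q)$, hence has at most a simple pole along the divisor; a finite sum of functions each with at most a simple pole along a divisor again has at most a simple pole, since orders add under multiplication, not addition. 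So several quadruples $(a,b,c,d)$, or the two pinch families ($P_r^+$ against $P_s^-$ and $P_s^+$ against $P_r^-$), realizing the same collision cannot create an uncancelled higher-order pole, and the doubled product even leaves a margin. The remaining routine points you note (the progressions accumulate only at $0$ and $\infty$; the collision locus is a countable union of proper analytic hypersurfaces, so its complement is connected and deformation paths exist; uniqueness via the identity theorem) are exactly what is needed.
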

In particular this corresponds with the fact that in the univariate case our graph has one vertex, with several half-edges pointing out from it. In particular there are no closed paths (as there are no edges), and any path starting and ending at an half-edge is just the combination of two arbitrary half-edges.

Now we prove our statement with induction to $n$. Suppose the conditions of the theorem. 
We first consider the analyticity of the integral of just the $n-1$ variables $z_1$ until $z_{n-1}$. The corresponding graph is obtained by removing the $n$'th vertex, while transforming the edges to this $n$'th vertex into pairs of half-edges: the edge from $i$ to $n$ with label $u$ is changed into two half-edges attached to $i$ with labels $uz_n$ and $u/z_n$. 

Notice that the $z_n$-dependent part of the prefactor making the integral to $n-1$ variables holomorphic, apart from the terms $(t_{rn} z_n^{\pm 1};p,q)$ is contained in the product over paths starting and ending at an half-edge. In particular paths $\lambda$ starting at an half-edge labeled $uz_n$ and ending at an half edge $t$ give (using $z_n\to 1/z_n$ symmetry) a term in the prefactor of $(u t v(\lambda) z_n^{\pm 1};p,q)$ (where $v(\lambda)$ denotes the product of the labels of the edges of the path (excluding the half-edges at the end)), while paths $\lambda$ starting at an half-edge labeled $u z_n$ and ending at one labeled $u' z_n$ give a term in the prefactor of the form $(uu' v(\lambda) z_n^{\pm 2};p,q)$, and the remaining terms are $z_n$-independent. 
The latter can be expressed as 
\[
(uu'v(\lambda) z_n^{\pm 2};p,q) = 
(\pm \sqrt{uu'v(\lambda) } z_n^{\pm 1}, \pm \sqrt{puu'v(\lambda) } z_n^{\pm 1},\pm \sqrt{quu'v(\lambda) } z_n^{\pm 1},\pm \sqrt{pquu'v(\lambda) } z_n^{\pm 1};p,q).
\]
In particular we see that the $(n-1)$-fold integral is of the form where we can use the corollary to obtain the analytic properties of its integral to $z_n$.

The $z_n$-independent prefactors come from paths between half-edges whose labels are both independent of $z_n$, paths between half-edges labeled by $uz_n$, respectively $u'/z_n$, and closed paths within the graph of $n-1$ vertices. The first and last of these three cases carry over directly to the same paths in the graph with $n$ vertices, so those prefactors are indeed accounted for in the general lemma for $n$-variables. The middle case becomes a closed path in the graph of $n$-vertices starting at vertex $n$, and is thus also accounted for in the prefactor for the $n$-variable case.

Let us now consider what the prefactor is that we must add in consideration of the $z_n$-dependent parts of the prefactor after $n-1$ integrals. As described these prefactors fall in three general groups. We can actually combine the group consisting of terms $(t_{rn} z_n^{\pm 1};p,q)$ with the group consisting of terms $(tu v(\lambda)z_n^{\pm 1};p,q)$ for a path $\lambda$ starting at a half-edge labeled $t$ and ending at an half-edge labeled $uz_n^{\pm 1}$, as both being related to paths starting at an half-edge and ending at the vertex $n$ in the new graph. Their products will thus be related to paths starting and ending at the associated half-edges and passing through vertex $n$. The length of these paths is bounded by $2 \cdot ( 2\cdot 3^{n-2})< 2\cdot 3^{n-1}$. 

For the products of terms associated to a path $\mu$ starting at $uz_n$ and ending at $u'z_n$ and a path $\lambda$ starting at $t$ and ending at $u'' z_n$, we get 
\begin{multline*}
(\pm \sqrt{uu'v(\mu) } tu'' v(\lambda) , \pm \sqrt{puu'v(\mu) }tu'' v(\lambda) ,\pm \sqrt{quu'v(\mu) } tu'' v(\lambda) ,\pm \sqrt{pquu'v(\mu) } tu'' v(\lambda) ;p,q)
\\ =
(uu'v(\mu) t^2 (u'')^2 v(\lambda)^2;p,q).
\end{multline*}
Thus this corresponds to the path, starting and ending at the half-edge labeled by $t$, which first goes to vertex $n$ along $\lambda$ (with last step $u''$), then follows the path $\mu$ (starting with $u$ and ending with $u'$), and then returns along $\lambda$ to the original half-edge. Thus it corresponds to a path of length at most $3 \cdot (2\cdot 3^{n-2}) = 2 \cdot 3^{n-1}$. 

Finally we have to consider the case of products of terms associated to a path $\mu$ starting at $u z_n$, ending at $u'z_n$ and a path $\lambda$ starting at $\hat u z_n$ and ending at $\hat u' z_n$. This will correspond to a product of $8^2=64$ terms, which simplifies to 
\[
(uu'v(\mu) \hat u \hat u' v(\lambda),
puu'v(\mu) \hat u \hat u' v(\lambda),
quu'v(\mu) \hat u \hat u' v(\lambda), pq uu'v(\mu) \hat u \hat u' v(\lambda);p,q)^2
\]
In particular this corresponds to a closed path in the new graph starting at vertex $n$, using $\mu$ to return to $n$, and following it by $\lambda$. The total length is thus at most $2 (2\cdot 3^{n-2}) = 4\cdot 3^{n-2}$.

The final question we have to consider is if we find the same path multiple times (in which case we would have to take a certain power of the associated prefactor). However, given any path we can determine how it was created. If the path is a loop, then it has to start at vertex $n$, or the associated prefactor was already obtained in the prefactor of the $n-1$-dimensional integral. Then, the loop either returns to $n$ in the middle of the path or it does not. In the second case, the term must be obtained from combining two paths starting and ending at a half-edge of the form $uz_n$, and the break between those two paths occurs at the first return to vertex $n$, while in the first case the term is already contained in the prefactor of the $n-1$-dimensional integral associated to a path starting at $uz_n$ and ending at $u'/z_n$. For the paths between half-edges, we see that either they pass through vertex $n$ or they don't. If they don't then they are already contained in the prefactor of the $n-1$-dimensional integral. If they do, then they are the combination of two paths starting at a half edge and ending at $uz_n$, the break between them occurring exactly at the point the path reaches vertex $n$.

Note that many paths are actually impossible, so we could significantly simplify the prefactor. However doing this would make the expression more complicated and was unnecessary for our applications.
\end{proof}

\section{The induction enabler transformation}\label{secbluebox}
The first transformation we consider is the so-called induction enabler. It transforms an $n$-dimensional Selberg integral into a double integral of an $(n-1)$-dimensional Selberg integral and a one-dimensional integral. This transformation is therefore very convenient to be able to apply inductive arguments.

\begin{prop}\label{propbluebox}
For parameters $t_r, v\in \mathbb{C}^*$ satisfying the balancing condition $t^{2n-3} \prod_{r=1}^8 t_r =(pq)^{2}$ we have 
\begin{align*}
\int & \Delta_{II}^{(n)}(t;z) \prod_{r=1}^8 \Gamma(t_r z^{\pm 1}) \Gamma(\sqrt{\frac{pq}{t}} v^{\pm 1} z^{\pm 1})
 = \Gamma( t^n)  \prod_{k=1}^{n-1} \prod_{1\leq r<s\leq 8} \Gamma(t^{k-1} t_rt_s) 
  \\& \qquad \times 
\iint \Delta_{II}^{(n-1)}(t;z) \Delta_{I}^{(1)}(y) \Gamma(\sqrt{\frac{pq}{t}} z^{\pm 1}y^{\pm 1}) 
\prod_{r=1}^8 \Gamma(\frac{\sqrt{pq}}{t_rt^{\frac12n-1}} z^{\pm 1},   t_rt^{\frac12(n-1)} y^{\pm 1})
 \Gamma(\sqrt{\frac{pq}{t^n}}  v^{\pm 1} y^{\pm 1})  
 \end{align*}
\end{prop}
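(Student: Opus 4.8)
The plan is to establish the identity as an equality of meromorphic functions in the parameters $t_1,\dots,t_8,v$ subject to $t^{2n-3}\prod_{r=1}^8 t_r=(pq)^2$, and to obtain it by stringing together the transformations recalled above, interspersed with applications of Fubini. I would first settle the base case $n=1$, where $\Delta_{II}^{(1)}(t;z)=\Gamma(t)\Delta_I^{(1)}(z)$, the empty product over $k$ and the empty $\Delta_{II}^{(0)}$-integral are both $1$, all $z$-dependent factors on the right disappear, and the two sides agree termwise after renaming $z$ to $y$. For $n\geq 2$ both sides are $n$-fold integrals --- the right-hand side being an $(n-1)+1$ splitting of the integration variables --- so the real content is to transform the $n$-dimensional $\Delta_{II}$-integral on the left into the double integral on the right.

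The engine will be the elliptic Dixon transformation, as the shape of the transformed parameters makes clear. Setting $\tau_r=t_r t^{\frac12 n-1}$, the eight $z$-parameters $\sqrt{pq}/\tau_r$ are exactly the Dixon images $t\mapsto\sqrt{pq}/t$ of the $\tau_r$, while the eight $y$-parameters $\tau_r t^{1/2}$ are the $\tau_r$ left essentially untouched; this signals a Dixon step that peels off a single integration variable, carrying the $\Delta_I$-part from dimension $n$ to dimension $n-1$ and redistributing the eight $t_r$ between the surviving $z$-integral (transformed) and the new univariate $y$-integral (untransformed). To make the bare Dixon transformation applicable I must first dispose of the Selberg pair-coupling: writing $\Delta_{II}^{(n)}(t;z)=\Gamma(t)^n\Delta_I^{(n)}(z)\prod_{j<k}\Gamma(tz_j^{\pm1}z_k^{\pm1})$ exhibits the factor $\prod_{j<k}\Gamma(tz_j^{\pm1}z_k^{\pm1})$ as the only obstruction, and I would remove it by passing through \eqref{eqtrafo3} (applied to the two quadruples $\{t_1,\dots,t_4\}$ and $\{t_5,\dots,t_8\}$, using the freedom to write any parameter in the special form $\sqrt{pq/t}\,v_s$), which is precisely the transformation tailored to $\Delta_{II}$-integrals carrying such a pair. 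Along the way the coupling $\Gamma(\sqrt{pq/t}\,z^{\pm1}y^{\pm1})$ emerges as the image of the cross-terms and the pair $\sqrt{pq/t}\,v^{\pm1}$ migrates from $z$ to $y$, becoming $\sqrt{pq/t^n}\,v^{\pm1}$. The prefactor $\Gamma(t^n)\prod_{k=1}^{n-1}\prod_{1\leq r<s\leq 8}\Gamma(t^{k-1}t_rt_s)$ should then assemble from the Dixon prefactors $\prod_{r<s}\Gamma(t_rt_s)$, the $\prod_{i}\prod_{1\leq r<s\leq 4}\Gamma(t^{n-i}t_rt_s)$-factors of \eqref{eqtrafo3}, and the coupling factors $\prod\Gamma(\sqrt{pq/t}\,t_rv_s^{\pm1})$ that supply the cross-quadruple pairs; matching it termwise, and repeatedly using the balancing condition to simplify, is the first place demanding careful bookkeeping.

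The main obstacle, however, will be justifying each interchange of the $y$- and $z$-integrations in the chain. The usual device --- specializing the parameters so that every contour is the unit circle and then continuing analytically --- is unavailable here, exactly as discussed in Section \ref{secFubini}: the balancing constraints together with the pair multiplying to $pq/t$ prevent all the poles from being pushed to one side of the unit circle at once, so product contours need not exist. I would therefore license every interchange through Lemma \ref{lemmero}, exhibiting at each stage the relevant graph $G$ and checking that no path --- closed, or running between two half-edges --- has the product of its edge-labels lying in $p^{\mathbb{Z}_{\leq 0}}q^{\mathbb{Z}_{\leq 0}}$, which is exactly the condition that the prefactor of Lemma \ref{lemmero} not vanish in the specialization at hand. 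The counterexample closing Section \ref{secFubini} shows that this can genuinely fail, and that the $pq/t$-pair is precisely the configuration that threatens a bad path, so verifying that the balancing condition keeps every path-product away from $p^{\mathbb{Z}_{\leq 0}}q^{\mathbb{Z}_{\leq 0}}$ is the crux rather than a formality. Once each interchange is licensed, reading the resulting chain of equalities from the left-hand integral down to the double integral on the right completes the proof.
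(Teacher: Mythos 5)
Your reading of the shape of the right-hand side, and your plan to license every interchange of integrals through Lemma \ref{lemmero}, both match the paper. But the core of your transformation chain has a genuine gap. First, the proposed use of \eqref{eqtrafo3} on the quadruples $\{t_1,\dots,t_4\}$ and $\{t_5,\dots,t_8\}$ is not available for generic parameters: that transformation carries its own balancing condition $\prod_{r=1}^4 t_r=t^{2+m-n}$ on the distinguished quadruple, which is an extra constraint not implied by the hypothesis $t^{2n-3}\prod_{r=1}^8 t_r=(pq)^2$ (the freedom to rewrite a parameter as $\sqrt{pq/t}\,v_s$ does not help with this). Moreover, even where it applies, \eqref{eqtrafo3} returns another $\Delta_{II}^{(m)}$-integral, so it does not ``dispose of the Selberg pair-coupling'' unless it lowers the dimension all the way to $m\leq 1$, which forces specific values of $n$. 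The device the paper actually uses to trade cross-couplings for extra integrations is the Dixon transformation run backwards on an \emph{auxiliary} set of variables: one writes $\Gamma(t^n)^{-1}\prod_{j<k}\Gamma(tz_j^{\pm1}z_k^{\pm1})\cdot(\cdots)$ as an $(n-1)$-dimensional Dixon integral over new variables $y$ coupled to $z$ by $\Gamma(\sqrt{t}\,y^{\pm1}z^{\pm1})$. Your proposal never invokes this trick, and without it neither the removal of the $\Delta_{II}$-coupling nor the creation of the coupling $\Gamma(\sqrt{pq/t}\,z^{\pm1}y^{\pm1})$ on the right can get off the ground.

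Second, a single Dixon step cannot produce the right-hand side: the Dixon transformation maps one $\Delta_I$-integral to another, transforming \emph{all} parameters as $t_r\mapsto\sqrt{pq}/t_r$; it does not split an integral into a genuinely coupled double integral with half the parameters transformed and half untouched. The telescoping prefactor $\prod_{k=1}^{n-1}\prod_{r<s}\Gamma(t^{k-1}t_rt_s)$ is already a hint that the passage from left to right is an $O(n)$-step process. The paper's proof introduces an auxiliary family of triple integrals $F_{k,l}$, relates the left-hand side to $F_{3,n-2}$ (using several Dixon transformations and one carefully arranged application of \eqref{eqtrafo3} whose balancing condition is engineered to hold), iterates a lemma taking $F_{k,l}$ to $F_{k+1,l-1}$ a total of $n-2$ times, and then simplifies $F_{n+1,0}$ to the right-hand side. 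Some multi-step scaffolding of this kind is unavoidable, and it is the main missing idea in your proposal.
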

We want to view this transformation as an equation between meromorphic functions. In the case 
$|pq|<|t^n| <1$ and $|\sqrt{pq} t^{1-\frac12 n}|<|t_r|<1$ we get an equation between two multivariate integrals whose integration contours are products of the unit circle.

\begin{proof}
Throughout the proof we use the definition 
\begin{multline*}
F_{k,l}(t_1,\ldots,t_7;u;v;a) :=
\iiint \Delta_I^{(1)}(z) \Delta_{II}^{(k)}(t;y)\Delta_{II}^{(l)}(t;x) \Gamma(\sqrt{\frac{pq}{t}} z^{\pm 1}y^{\pm 1}, \sqrt{\frac{pq}{t}} y^{\pm 1} x^{\pm 1}) 
\\ \times \prod_{r=1}^7 \Gamma(\frac{\sqrt{pqt} }{t_r} y^{\pm 1},t_r x^{\pm 1})
\Gamma(a v^{\pm 1} z^{\pm 1})
\Gamma(u y^{\pm 1}, \frac{\sqrt{pq}}{u t^{k+l-3/2}} x^{\pm 1}),
\end{multline*}
assuming the balancing conditions 
\[
pq a^2 = t^k, \qquad 
\prod_r t_r = u (pq)^{3/2} t^{2k+1/2-l}.
\]
Of course these equations allow us to calculate $a$ directly, but the formulas are prettier if we use $a$. We will mostly ignore the $a$ in the list of variables of $F_{k,l}$ from now on though. 

The proof is ordered almost like a musical piece. It starts with an introduction, which relates the left hand side of the proposition to an $F_{3,n-2}$. Subsequently we repeat a sequence which equates an $F_{k,l}$ with an $F_{k+1,l-1}$ over and over, to arrive at an $F_{n+1,0}$. Finally we have a coda which simplifies this $F_{n+1,0}$ to the desired right hand side of the proposition.

The introduction is given by the following lemma.
\begin{lemma}\label{lem41}
We have 
\begin{multline*}
\int \Delta_{II}^{(n)}(t;z)\prod_{r=1}^8 \Gamma(t_r z^{\pm 1}) \Gamma(\sqrt{\frac{pq}{t}} v^{\pm 1} z^{\pm 1}) 
= \Gamma(\sqrt{pq} t_8 t^{n-5/2} v^{\pm 1}, \frac{pq}{t^2},t^n, t^{n-1},\frac{pq}{t}, \frac{(pq)^2}{t^3})  \\ \times  \prod_{1\leq r<s\leq 7}  \Gamma(t_rt_s, tt_rt_s) \prod_{r=1}^7 \Gamma( \sqrt{\frac{pq}{t}} t_r v^{\pm 1}, t_rt_8 t^{n-1},  t_rt_8t^{n-2})  F_{3,n-2}(tt_1, \ldots, tt_7;\frac{\sqrt{pq}}{t_8 t^{n-3/2}};v)
\end{multline*}
\end{lemma}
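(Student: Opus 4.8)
The plan is to manufacture the triple integral $F_{3,n-2}$ out of the single $n$-fold Selberg integral on the left by introducing two auxiliary integration blocks and recording the $\Gamma$-factors that accumulate. I first observe that the left-hand side is a ten-parameter ($k=5$) Selberg integral whose special pair $\sqrt{pq/t}\,v^{\pm1}$ multiplies to $pq/t$, whereas $F_{3,n-2}$ is a chain $z$–$y$–$x$ consisting of a one-dimensional Dixon block, a three-dimensional Selberg block and an $(n-2)$-dimensional Selberg block, coupled only through $\sqrt{pq/t}$-cross terms. Thus the total dimension must grow from $n$ to $n+2$, and the whole content of the lemma is to organize this growth using the three transformations quoted above. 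To present the new one-dimensional piece as the Dixon block $\Delta_I^{(1)}$ I would use the identification $\Delta_{II}^{(1)}(t;z)=\Gamma(t)\Delta_I^{(1)}(z)$.

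The engine I would use is the multivariate transformation \eqref{eqtrafo3}, together with evaluation formulas run backwards to insert auxiliary blocks. Concretely, I would introduce a three-dimensional block (which becomes $y$) and couple it to the original $n$-dimensional block through \eqref{eqtrafo3}, specializing the free parameters $v_1,\dots,v_{m+n}$ to the $BC$-orbit of that block so that $\prod_r\Gamma(\sqrt{pq/t}\,v_r z^{\pm1})$ becomes an inter-block coupling of the form $\Gamma(\sqrt{pq/t}\,y^{\pm1}\cdot)$; this simultaneously reduces the original block from dimension $n$ to $n-2$ (the future $x$-block) and produces exactly the $t$-graded prefactors $\prod_{1\le r<s\le7}\Gamma(t_rt_s,tt_rt_s)$ and $\prod_{r=1}^7\Gamma(t_rt_8t^{n-1},t_rt_8t^{n-2})$ appearing on the right. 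The residual $v$-dependence, carried by the factor $\Gamma(\sqrt{pq/t}\,v^{\pm1}z^{\pm1})$ of the original integral, I would peel off by a one-dimensional ($n=1$) Dixon/$W(E_7)$ move, since the Selberg transformation is used here only univariately; this inserts the one-dimensional block $z$ with its coupling $\Gamma(av^{\pm1}z^{\pm1})$ and generates the $v$-linear factor $\Gamma(\sqrt{pq}\,t_8t^{n-5/2}v^{\pm1})\prod_{r=1}^7\Gamma(\sqrt{pq/t}\,t_rv^{\pm1})$ as well as the scalar terms $\Gamma(pq/t^2,t^n,t^{n-1},pq/t,(pq)^2/t^3)$. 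A final reordering of the integrations then assembles the chain in the shape of the definition of $F_{k,l}$ with $k=3$, $l=n-2$.

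Before trusting the bookkeeping I would check the two balancing conditions defining $F_{3,n-2}$. The nontrivial one, $\prod_{r=1}^7(tt_r)=u(pq)^{3/2}t^{2\cdot3+1/2-(n-2)}$ with $u=\sqrt{pq}/(t_8t^{n-3/2})$, collapses after simplification to $\prod_{r=1}^8 t_r=(pq)^2t^{3-2n}$, which is exactly the hypothesis $t^{2n-3}\prod_{r=1}^8 t_r=(pq)^2$; the other condition $pq\,a^2=t^3$ merely fixes $a=\sqrt{t^3/pq}$. This coincidence of balancing conditions is the crucial sanity check that $tt_1,\dots,tt_7$ and $u$ are the correct arguments of $F$, and it forces the parameter map $t_r\mapsto tt_r$ ($r\le7$), $t_8\mapsto t_8/t$ that one reads off the $x$-block.

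The main obstacle is not the algebra of the prefactors but the legitimacy of each reordering of integrations. For the parameter values in force the product contours built from the unit circle need not exist, so every line must be read as an identity of meromorphic functions and every use of Fubini must be licensed by Lemma~\ref{lemmero}: for each intermediate multiple integral I must verify that no path in the associated graph, whether closed or joining two half-edges, has edge-label product landing in $p^{\mathbb{Z}_{\le0}}q^{\mathbb{Z}_{\le0}}$, precisely the failure exhibited by the $v_1\to t\to t\to v_2$ path in the counterexample. Tracking the holomorphy prefactors of Lemma~\ref{lemmero} through the specializations, so that no spurious $\tfrac00$ is created when the $v_r$ are sent to the nodes of the auxiliary block, is where the real care lies; once that is secured, matching the accumulated $\Gamma$-factors against the stated prefactor is a finite, if lengthy, computation governed throughout by the balancing condition.
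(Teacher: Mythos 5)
Your high-level strategy is the right one (insert auxiliary integration blocks, string together the known transformations, and police Fubini via Lemma \ref{lemmero}), and your verification that the balancing condition of $F_{3,n-2}(tt_1,\ldots,tt_7;\sqrt{pq}/(t_8t^{n-3/2});v)$ collapses to $t^{2n-3}\prod_{r=1}^8 t_r=(pq)^2$ is correct and is indeed the right sanity check on the parameter map. However, the concrete engine you propose does not work. Coupling a three-dimensional auxiliary block to the original $n$-dimensional block via \eqref{eqtrafo3} cannot reduce that block from dimension $n$ to $n-2$: in \eqref{eqtrafo3} the source and target dimensions $n$ and $m$ are tied to the number of free parameters by $m+n=\#\{v_r\}$, so a $3$-dimensional block supplies $6$ parameters and forces $m=6-n$, which equals $n-2$ only when $n=4$. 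Moreover, in $F_{3,n-2}$ it is the \emph{one}-dimensional block that carries the $v$-coupling $\Gamma(av^{\pm1}z^{\pm1})$ and the $(n-2)$-dimensional block carries the parameters $tt_r$ (not $t/t_r$ as \eqref{eqtrafo3} would produce), so your assignment of roles to the blocks cannot be completed; and a univariate $W(E_7)$ move transforms an existing one-dimensional integral but does not insert a new integration variable, so it cannot "peel off" the $v$-dependence into a new block as you describe.

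The actual proof runs differently: the main workhorse is the Dixon transformation used repeatedly in the variable-\emph{increasing} direction to trade the cross terms of $\Delta_{II}$ for couplings to fresh blocks (an $(n-1)$-fold $y$-block, then an $(n-2)$-fold $x$-block), alternating with Dixon transformations in the variable-decreasing direction that shrink the original $z$-block from $n$ to $2$ and the $y$-block from $n-1$ to $3$; a key device is splitting $t_8$ across blocks (e.g.\ $\Gamma(t_8 z^{\pm1})$ is distributed as $t_8/\sqrt{t}$ on $y$ and $t_8t^{n-1}$ on $z$) so that the parameter counts of each Dixon step close up. The transformation \eqref{eqtrafo3} enters exactly once, applied to the intermediate \emph{two}-dimensional $z$-integral with the four special parameters $\sqrt{t}v$, $\sqrt{t}/v$, $\sqrt{pq}/(t_8t^{n-1})$, $\sqrt{pq}\,t_8t^{n-3}$, which lowers it to one dimension and eliminates the two $t_8$-parameters, producing the factor $\Gamma(\sqrt{pq}\,t_8t^{n-5/2}v^{\pm1},pq/t^2)$; a final univariate Dixon step then puts the $z$-block into the form $\Delta_I^{(1)}(z)\Gamma(\sqrt{t^3/pq}\,v^{\pm1}z^{\pm1})$ required by $F_{3,n-2}$. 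None of this sequencing is recoverable from your outline, so the proposal as written has a genuine gap at its core step.
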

\begin{proof}
A direct calculation shows 
\begin{align*}
\int & \Delta_{II}^{(n)}(t;z) \prod_{r=1}^8 \Gamma(t_r z^{\pm 1}) \Gamma(\sqrt{\frac{pq}{t}} v^{\pm 1} z^{\pm 1})
\\& \stackrel{y}{=}
\Gamma(t^n) \iint \Delta_I^{(n)}(z) \Delta_I^{(n-1)}(y) \Gamma(\sqrt{t} y^{\pm 1} z^{\pm 1}) 
\prod_{r=1}^7 \Gamma(t_rz^{\pm 1}) \Gamma(\sqrt{\frac{pq}{t}} v^{\pm 1} z^{\pm 1})
\Gamma(\frac{t_8}{\sqrt{t}} y^{\pm 1}, \frac{pq}{t_8t^{n-1/2}} y^{\pm 1}, t_8 t^{n-1}z^{\pm 1})
\\ & \stackrel{z}{=}
 \Gamma(t^n,\frac{pq}{t}, \sqrt{\frac{pq}{t}} t_8 t^{n-1} v^{\pm 1}) \prod_{1\leq r<s\leq 7} \Gamma(t_rt_s)  \prod_{r=1}^7 \Gamma( \sqrt{\frac{pq}{t}} t_r v^{\pm 1}, t_rt_8 t^{n-1})  
\\ & \qquad \times \iint \Delta_I^{(2)}(z) \Delta_{II}^{(n-1)}(t;y) \Gamma(\sqrt{\frac{pq}{t}} y^{\pm 1} z^{\pm 1}) 
\prod_{r=1}^7 \Gamma(\frac{\sqrt{pq}}{t_r}z^{\pm 1}, \sqrt{t} t_r y^{\pm 1}) \Gamma(\sqrt{t} v^{\pm 1} z^{\pm 1})
\Gamma(\frac{t_8}{\sqrt{t}} y^{\pm 1}, \frac{\sqrt{pq}}{ t_8 t^{n-1}}z^{\pm 1})
\\ & \stackrel{x}{=}
 \Gamma(t^n,t^{n-1},\frac{pq}{t}, \sqrt{\frac{pq}{t}} t_8 t^{n-1} v^{\pm 1}) \prod_{1\leq r<s\leq 7} \Gamma(t_rt_s)  \prod_{r=1}^7 \Gamma( \sqrt{\frac{pq}{t}} t_r v^{\pm 1}, t_rt_8 t^{n-1})  
\\ & \qquad  \times \iiint \Delta_I^{(2)}(z) \Delta_{I}^{(n-1)}(y) \Delta_I^{(n-2)}(x) 
\Gamma(\sqrt{\frac{pq}{t}} y^{\pm 1} z^{\pm 1}, \sqrt{t} y^{\pm 1}x^{\pm 1}) 
\\ & \qquad \qquad \times \prod_{r=1}^7 \Gamma(\frac{\sqrt{pq}}{t_r}z^{\pm 1}, \sqrt{t} t_r y^{\pm 1}) \Gamma(\sqrt{t} v^{\pm 1} z^{\pm 1})
\Gamma(\frac{\sqrt{pq}}{ t_8 t^{n-1}}z^{\pm 1}, t_8 t^{n-5/2} y^{\pm 1}, \frac{t_8}{t} x^{\pm 1}, \frac{pq}{t_8 t^{n-2}} x^{\pm 1} )
\end{align*}
Continuing with 
\begin{align*}
 & \stackrel{y}{=}
 \Gamma(t^n,t^{n-1},\frac{pq}{t}, \sqrt{\frac{pq}{t}} t_8 t^{n-1} v^{\pm 1}) \prod_{1\leq r<s\leq 7} \Gamma(t_rt_s, tt_rt_s)  \prod_{r=1}^7 \Gamma( \sqrt{\frac{pq}{t}} t_r v^{\pm 1}, t_rt_8 t^{n-1},  t_rt_8t^{n-2})  
\\ & \qquad  \times \iiint \Delta_{II}^{(2)}(\frac{pq}{t};z) \Delta_{I}^{(3)}(y) \Delta_{II}^{(n-2)}(t;x) 
\Gamma(\sqrt{t} y^{\pm 1} z^{\pm 1}, \sqrt{\frac{pq}{t}} y^{\pm 1}x^{\pm 1}) 
\\ & \qquad \qquad \times \prod_{r=1}^7 \Gamma(\sqrt{\frac{pq}{t}} \frac{1}{t_r} y^{\pm 1}, tt_r x^{\pm 1}) \Gamma(\sqrt{t} v^{\pm 1} z^{\pm 1})
\Gamma(\frac{\sqrt{pq}}{ t_8 t^{n-1}}z^{\pm 1}, \sqrt{pq} t_8 t^{n-3} z^{\pm 1}, \frac{\sqrt{pq}}{t_8 t^{n-5/2}} y^{\pm 1}, \frac{t_8}{t} x^{\pm 1})
\end{align*}
Now we can use the transformation \eqref{eqtrafo3} on the $z$-integral, where the four special parameters are $\sqrt{t}v$, $\sqrt{t}/v$, $\frac{\sqrt{pq}}{t_8t^{n-1}}$, and $\sqrt{pq} t_8 t^{n-3}$. In this case the transformation is dimension lowering, and in the final equation we lose the two parameters involving $t_8$:
\begin{align*}
& = \Gamma(\sqrt{pq} t_8 t^{n-5/2} v^{\pm 1}, \frac{pq}{t^2},t^n, t^{n-1}) \prod_{1\leq r<s\leq 7} \Gamma(t_rt_s, tt_rt_s)  \prod_{r=1}^7 \Gamma( \sqrt{\frac{pq}{t}} t_r v^{\pm 1}, t_rt_8 t^{n-1},  t_rt_8t^{n-2})  
\\ & \qquad  \times \iiint \Delta_{II}^{(1)}(\frac{pq}{t};z) \Delta_{I}^{(3)}(y) \Delta_{II}^{(n-2)}(t;x) 
\Gamma(\sqrt{t} y^{\pm 1} z^{\pm 1}, \sqrt{\frac{pq}{t}} y^{\pm 1}x^{\pm 1}) 
\\ & \qquad \qquad \times \prod_{r=1}^7 \Gamma(\sqrt{\frac{pq}{t}} \frac{1}{t_r} y^{\pm 1}, tt_r x^{\pm 1}) \Gamma(\frac{pq}{t\sqrt{t}} v^{\pm 1} z^{\pm 1}, t v^{\pm 1} y^{\pm 1})
\Gamma( \frac{\sqrt{pq}}{t_8 t^{n-3/2}} y^{\pm 1}, \frac{t_8}{t} x^{\pm 1})
\\ & \stackrel{z}{=}
 \Gamma(\sqrt{pq} t_8 t^{n-5/2} v^{\pm 1}, \frac{pq}{t^2},t^n, t^{n-1}, \frac{pq}{t}, \frac{(pq)^2}{t^3}) \prod_{1\leq r<s\leq 7} \Gamma(t_rt_s, tt_rt_s)  \prod_{r=1}^7 \Gamma( \sqrt{\frac{pq}{t}} t_r v^{\pm 1}, t_rt_8 t^{n-1},  t_rt_8t^{n-2})  
\\ & \qquad  \times \iiint \Delta_{I}^{(1)}(z) \Delta_{II}^{(3)}(t;y) \Delta_{II}^{(n-2)}(t;x) 
\Gamma(\sqrt{\frac{pq}{t}} y^{\pm 1} z^{\pm 1}, \sqrt{\frac{pq}{t}} y^{\pm 1}x^{\pm 1}) 
\\ & \qquad \qquad \times \prod_{r=1}^7 \Gamma(\sqrt{\frac{pq}{t}} \frac{1}{t_r} y^{\pm 1}, tt_r x^{\pm 1}) 
\Gamma(\sqrt{\frac{t^3}{pq}} v^{\pm 1} z^{\pm 1})
\Gamma( \frac{\sqrt{pq}}{t_8 t^{n-3/2}} y^{\pm 1}, \frac{t_8}{t} x^{\pm 1})
\end{align*}
The remaining integral is indeed an $F_{3,n-2}$ as desired.
\end{proof}

The main iterative sequence is given by 
\begin{lemma}\label{lem43}
The following transformation holds
\begin{multline*}
F_{k,l}(t_r;u;v;a) = \Gamma(t^l, \frac{pq}{t^k},\frac{t^k}{pq} , t^{\frac12 (k-1)}u v^{\pm 1}, \frac{pq}{ut^{\frac12(k+1)}} v^{\pm 1}, \frac{(pq)^{2}}{t^{k+1}})
\prod_{1\leq r<s\leq 7} \Gamma(t_rt_s) 
\\ \times \prod_{r=1}^7 \Gamma(\frac{\sqrt{pq} t_r}{u t^{k-1/2}}) F_{k+1,l-1}(t_r\sqrt{t};u \sqrt{t};v;a\sqrt{t}).
\end{multline*} 
\end{lemma}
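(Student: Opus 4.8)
The plan is to prove Lemma \ref{lem43} by the same calculational strategy used for the introductory Lemma \ref{lem41}: a choreographed string of elliptic Dixon transformations, punctuated by a single application of the dimension-lowering transformation \eqref{eqtrafo3}, with the order of integration rearranged by Fubini whenever convenient. Before computing I would record that the two balancing conditions are compatible: from $\prod_r t_r = u(pq)^{3/2}t^{2k+1/2-l}$ one checks that $\prod_r(t_r\sqrt t)=u\sqrt t\,(pq)^{3/2}t^{2(k+1)+1/2-(l-1)}$ and that $pq(a\sqrt t)^2=t^{k+1}$, so that the right-hand side $F_{k+1,l-1}(t_r\sqrt t;u\sqrt t;v;a\sqrt t)$ is a legitimate member of the $F$-family. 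Comparing the two integrands term by term tells me what the calculation must achieve: the $x$-block loses one dimension while its eight parameters $t_r$ and $\tfrac{\sqrt{pq}}{ut^{k+l-3/2}}$ are rescaled, the $y$-block gains one dimension with its parameters rescaled oppositely, and the single $z$-parameter $a$ is multiplied by $\sqrt t$.

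First I would unfold the $l$-dimensional Selberg block $\Delta_{II}^{(l)}(t;x)$ into a type-$I$ density $\Delta_I^{(l)}(x)$ times an auxiliary $(l-1)$-dimensional Dixon integral linked by a $\sqrt t$-factor, exactly as the $z$-block was unfolded in the first step of the proof of Lemma \ref{lem41}; this is an application of the elliptic Dixon evaluation read backwards, and it is the source of the factor $\Gamma(t^l)$ in the stated prefactor. I would then carry out a sequence of Dixon transformations in the various blocks, exploiting the key feature already used in Lemma \ref{lem41}: a Dixon transformation in one block generates the pairwise products $\Gamma(s_rs_s)$ of the parameters attached to that block, and these products reconstitute the type-$II$ cross terms of neighbouring blocks, thereby refolding those neighbours between type $I$ and type $II$ while redistributing the parameters $t_r$, $u$, and the linking parameters. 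The products $\prod_{1\leq r<s\leq 7}\Gamma(t_rt_s)$ and $\prod_{r=1}^7\Gamma(\tfrac{\sqrt{pq}\,t_r}{ut^{k-1/2}})$, together with the pure powers $\Gamma(\tfrac{pq}{t^k},\tfrac{t^k}{pq},\tfrac{(pq)^2}{t^{k+1}})$, should all emerge as the accumulated prefactors of these Dixon steps and of the beta-integral evaluations that collapse the auxiliary integral.

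The one non-Dixon move is a single application of \eqref{eqtrafo3} to the $z$-block, with a pair of special parameters proportional to $v^{\pm1}$, exactly as in Lemma \ref{lem41}; this is the step that adjusts the relevant dimension and produces the $v$-dependent prefactor $\Gamma(t^{\frac12(k-1)}uv^{\pm1},\tfrac{pq}{ut^{\frac12(k+1)}}v^{\pm1})$, the analogue of the term $\Gamma(\sqrt{pq}\,t_8t^{n-5/2}v^{\pm1})$ appearing in Lemma \ref{lem41}. After this I expect a final Dixon transformation to refold the blocks into the Selberg densities $\Delta_{II}^{(k+1)}(t;y)$ and $\Delta_{II}^{(l-1)}(t;x)$ with the $\sqrt{pq/t}$-linkings characteristic of the $F$-family, at which point one reads off the rescalings $t_r\mapsto t_r\sqrt t$, $u\mapsto u\sqrt t$, $a\mapsto a\sqrt t$ and identifies the result as $F_{k+1,l-1}$. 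Throughout I would freely use the identification $\Delta_{II}^{(1)}(t;z)=\Gamma(t)\Delta_I^{(1)}(z)$ and the reflection equation to collapse terms.

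The main obstacle is twofold. The first difficulty is combinatorial: pinning down the exact order of the Dixon steps and the precise special-parameter choices so that every parameter lands in the right place and the prefactor assembles exactly as stated. This is pure, if lengthy, bookkeeping, and the balancing conditions recorded above are the running constraints that make it tractable. The second, and more delicate, difficulty is analytic: each interchange of the order of integration must be justified, and as the failure example in Section \ref{secFubini} shows, this cannot be taken for granted. I would discharge it at the end of the proof by appealing to Lemma \ref{lemmero}, checking for every intermediate multiple integral that no path in the associated graph (closed, or running between two half-edges) has edge-label product lying in $p^{\mathbb Z_{\leq0}}q^{\mathbb Z_{\leq0}}$, which is precisely the condition guaranteeing that Fubini may be applied.
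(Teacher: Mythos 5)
Your opening move (unfolding $\Delta_{II}^{(l)}(t;x)$ by introducing an auxiliary $(l-1)$-dimensional block $w$ linked by $\sqrt{t}$, which produces the factor $\Gamma(t^l)$) is exactly the paper's first step, and your check of the balancing conditions and your plan for discharging Fubini via Lemma \ref{lemmero} are both sound. But the central mechanism you propose is not the one that makes the identity work, and as described it cannot be executed. The paper's proof of Lemma \ref{lem43} uses \emph{no} application of \eqref{eqtrafo3}: it is a string of four Dixon moves (introduce $w$; Dixon in $x$, turning the $l$-dimensional block into a $(k{+}1)$-dimensional one and producing $\prod_{r<s}\Gamma(t_rt_s)$ and $\prod_r\Gamma(\sqrt{pq}\,t_r/(ut^{k-1/2}))$; Dixon in $y$, collapsing the $k$-dimensional block to one dimension and producing $\Gamma(pq/t^k)$; and finally an elliptic beta evaluation of the one-dimensional $z$-integral). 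The $v$-dependent prefactors $\Gamma(t^{\frac12(k-1)}uv^{\pm1},\,pq\,u^{-1}t^{-\frac12(k+1)}v^{\pm1})$ come from that last beta evaluation, whose six parameters are $av^{\pm1}$, $\sqrt{t}\,y^{\pm1}$, $\sqrt{pq/t}\,u$ and $(pq)^{3/2}/(ut^{k+1/2})$; the same evaluation creates the new coupling $\Gamma(a\sqrt{t}\,v^{\pm1}y^{\pm1})$ that transfers the $v$-parameters onto the old $y$-block.

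Your proposed substitute --- an application of \eqref{eqtrafo3} to the $z$-block with special parameters proportional to $v^{\pm1}$, ``exactly as in Lemma \ref{lem41}'' --- does not have the raw material to run: \eqref{eqtrafo3} needs four special parameters on the block it acts on, and in Lemma \ref{lem41} these are $\sqrt{t}v^{\pm1}$ together with two parameters built from $t_8$, whereas the $z$-block of $F_{k,l}$ carries only the two non-linking parameters $av^{\pm1}$ and has no analogue of $t_8$. Relatedly, your reading of the identity as ``the $x$-block loses a dimension and the $y$-block gains one, in place'' misses the actual structure: the chain shifts by one slot --- the old $z$-block is consumed by the beta evaluation, the old $k$-dimensional $y$-block becomes the new one-dimensional $v$-carrying block, the old $x$-block becomes the new $(k{+}1)$-dimensional block, and the freshly spawned $w$-block becomes the new $(l{-}1)$-dimensional block. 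Without this shift the parameters cannot land where the statement requires, so the bookkeeping you defer is not merely lengthy; as planned, it would not close.
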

\begin{proof}
First we introduce $w$ parameters to obtain
\begin{align*}
F_{k,l}(t_r;u;v) & \stackrel{w}{=} 
\Gamma(t^l) \iiiint \Delta_I^{(1)}(z) \Delta_{II}^{(k)}(t;y)\Delta_{I}^{(l)}(x) 
\Delta_I^{(l-1)}(w)
\Gamma(\sqrt{\frac{pq}{t}} y^{\pm 1}z^{\pm 1}, \sqrt{\frac{pq}{t}} y^{\pm 1} x^{\pm 1}, \sqrt{t} x^{\pm 1}w^{\pm 1}) 
\\ & \qquad  \times \prod_{r=1}^7 \Gamma(\frac{\sqrt{pqt} }{t_r} y^{\pm 1},t_r x^{\pm 1})
\Gamma(a  v^{\pm 1} z^{\pm 1})
\Gamma(u y^{\pm 1},  \frac{\sqrt{pq}}{u t^{k-1/2}} x^{\pm 1},\sqrt{pq} u t^{k-1} w^{\pm 1} ,\frac{\sqrt{pq}}{u t^{k+l-1}} w^{\pm 1})
\end{align*}
Continuing with 
\begin{align*}
 & \stackrel{x}{=}\Gamma(t^l) \prod_{1\leq r<s\leq 7} \Gamma(t_rt_s) 
\prod_{r=1}^7 \Gamma(\frac{\sqrt{pq} t_r}{u t^{k-1/2}}) 
\\ & \qquad \times 
\iiiint \Delta_I^{(1)}(z) \Delta_{I}^{(k)}(y)\Delta_{I}^{(k+1)}(x) 
\Delta_{II}^{(l-1)}(t;w)
\Gamma(\sqrt{\frac{pq}{t}} y^{\pm 1}z^{\pm 1}, \sqrt{t} y^{\pm 1} x^{\pm 1}, \sqrt{\frac{pq}{t}} x^{\pm 1}w^{\pm 1}) 
\\ & \qquad  \times \prod_{r=1}^7 \Gamma(t_r x^{\pm 1}, \sqrt{t} t_r w^{\pm 1})
\Gamma(a  v^{\pm 1} z^{\pm 1})
\Gamma(u y^{\pm 1}, \frac{pq}{ u t^k} y^{\pm 1},  u t^{k-1/2} x^{\pm 1},\frac{\sqrt{pq}}{u t^{k+l-1}} w^{\pm 1})
\\ & \stackrel{y}{=} \Gamma(t^l, \frac{pq}{t^k}, \frac{pq}{t}) \prod_{1\leq r<s\leq 7} \Gamma(t_rt_s) 
\prod_{r=1}^7 \Gamma(\frac{\sqrt{pq} t_r}{u t^{k-1/2}}) 
\\ & \qquad \times \iiiint \Delta_I^{(1)}(z) \Delta_{I}^{(1)}(y)\Delta_{II}^{(k+1)}(t;x) 
\Delta_{II}^{(l-1)}(t;w)
\Gamma(\sqrt{t} y^{\pm 1}z^{\pm 1}, \sqrt{\frac{pq}{t}} y^{\pm 1} x^{\pm 1}, \sqrt{\frac{pq}{t}} x^{\pm 1}w^{\pm 1}) 
\\ & \qquad  \times \prod_{r=1}^7 \Gamma(t_r x^{\pm 1}, \sqrt{t} t_r w^{\pm 1})
\Gamma(a  v^{\pm 1} z^{\pm 1})
\\ & \qquad \times \Gamma(\sqrt{\frac{pq}{t}} u z^{\pm 1}, \frac{(pq)^{3/2}}{u t^{k+1/2}} z^{\pm 1}, \frac{\sqrt{pq}}{u} y^{\pm 1}, 
\frac{ut^k}{\sqrt{pq}} y^{\pm 1}, u\sqrt{t} x^{\pm 1},\frac{\sqrt{pq}}{u t^{k+l-1}} w^{\pm 1})
\\ & \stackrel{z}{=} 
 \Gamma(t^l, \frac{pq}{t^k},a^2 , \sqrt{\frac{pq}{t}} au v^{\pm 1}, \frac{(pq)^{3/2}a}{ut^{k+1/2}} v^{\pm 1}, \frac{(pq)^{2}}{t^{k+1}})
\prod_{1\leq r<s\leq 7} \Gamma(t_rt_s) 
\prod_{r=1}^7 \Gamma(\frac{\sqrt{pq} t_r}{u t^{k-1/2}}) 
\\ & \qquad \times \iiint  \Delta_I^{(1)}(y)\Delta_{II}^{(k+1)}(t;x) 
\Delta_{II}^{(l-1)}(t;w)
\Gamma(\sqrt{\frac{pq}{t}} y^{\pm 1} x^{\pm 1}, \sqrt{\frac{pq}{t}} x^{\pm 1}w^{\pm 1}) 
\\ & \qquad  \times \prod_{r=1}^7 \Gamma(t_r x^{\pm 1}, \sqrt{t} t_r w^{\pm 1})
\Gamma(a\sqrt{t}  v^{\pm 1} y^{\pm 1}) \Gamma(u\sqrt{t} x^{\pm 1},\frac{\sqrt{pq}}{u t^{k+l-1}} w^{\pm 1}).
\end{align*}
The final integral can now be recognized as $F_{k+1,l-1}(t_r \sqrt{t}; u\sqrt{t};v)$.
\end{proof}
Finally we have the coda, which is
\begin{lemma}
We have
\begin{align*}
F_{n+1,0}(t_1,\ldots,t_7;u;v) &= \Gamma(\frac{t^{n+1}}{pq}, t^n, t^{\frac12 n} u v^{\pm 1}) \prod_{1\leq r<s\leq 7} \Gamma(\frac{pqt}{t_rt_s}) \prod_{r=1}^7 
\Gamma(\frac{\sqrt{pqt^{n+1}}}{t_r} v^{\pm 1}, \frac{\sqrt{pqt}u}{t_r}) \\& \qquad \times 
\iint \Delta_{II}^{(n-1)}(t;z) \Delta_{I}^{(1)}(y) \Gamma(\sqrt{\frac{pq}{t}} z^{\pm 1}y^{\pm 1}) 
\prod_{r=1}^7 \Gamma(\frac{\sqrt{pq}t}{t_r} z^{\pm 1},   \frac{t_r}{\sqrt{t} } y^{\pm 1})
\\ & \qquad \times \Gamma(\sqrt{\frac{pq}{t^n}}  v^{\pm 1} y^{\pm 1}) \Gamma(u\sqrt{t} z^{\pm 1}, \frac{\sqrt{pq}}{u} y^{\pm 1})
\end{align*}
\end{lemma}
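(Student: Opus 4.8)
The plan is to prove the coda with exactly \emph{two} applications of the elliptic Dixon transformation \cite[Theorem 3.1]{Rainstrafo}, one in each of the two remaining variables, intertwined with a single use of Fubini. Setting $l=0$ collapses the $x$-integral ($\Delta_{II}^{(0)}$ being empty), so that $F_{n+1,0}$ is the double integral
\[
\iint \Delta_I^{(1)}(z)\,\Delta_{II}^{(n+1)}(t;y)\,\Gamma(\sqrt{\tfrac{pq}{t}}z^{\pm 1}y^{\pm 1})\prod_{r=1}^7\Gamma(\tfrac{\sqrt{pqt}}{t_r}y^{\pm 1})\,\Gamma(av^{\pm 1}z^{\pm 1})\,\Gamma(uy^{\pm 1}),
\]
with $a,u$ governed by the balancing conditions $pqa^2=t^{n+1}$ and $\prod_r t_r=u(pq)^{3/2}t^{2n+5/2}$. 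Since the Dixon transformation acts on type $I$ integrals and the only type $I$ density is in $z$, the order of the two steps is forced: first transform the inner $z$-integral, then the $y$-integral.

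Step one is a transformation $\stackrel{z}{=}$ applied to the one-dimensional $z$-integral, whose $2n+4$ parameters are the $\sqrt{pq/t}\,y_j^{\pm 1}$ together with $av,a/v$; using $pqa^2=t^{n+1}$ their product is $(pq)^n$, exactly the balancing needed to rewrite the $1$-dimensional integral as an $(n-1)$-dimensional one. The key is the Dixon prefactor $\prod_{r<s}\Gamma(t_rt_s)$: its pairs among the $\sqrt{pq/t}\,y_j^{\pm 1}$ produce $\prod_{j<k}\Gamma(\tfrac{pq}{t}y_j^{\pm 1}y_k^{\pm 1})$, which by the reflection equation equals $\prod_{j<k}\Gamma(ty_j^{\pm 1}y_k^{\pm 1})^{-1}$ and hence turns $\Delta_{II}^{(n+1)}(t;y)$ into $\Gamma(t)^{n+1}\Delta_I^{(n+1)}(y)$; the mixed pairs with $av,a/v$ become two new single-$y$ parameters $\sqrt{pq/t}\,av$ and $\sqrt{pq/t}\,a/v$; and the self-pairs contribute the constants $\Gamma(pq/t)^{n+1}$ and $\Gamma(a^2)$. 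The transformed $z$-integral is now type $I$ of dimension $n-1$, coupled to $y$ through $\Gamma(\sqrt t\,y^{\pm 1}z^{\pm 1})$ and carrying the parameters $\tfrac{\sqrt{pq}}{a}v^{\pm 1}$.

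Step two, after a Fubini interchange making $y$ inner, is a transformation $\stackrel{y}{=}$ applied to the now type $I$, $(n+1)$-dimensional $y$-integral, whose $2n+8$ parameters (the $\tfrac{\sqrt{pqt}}{t_r}$, $u$, the two new ones, and the $\sqrt t\,z_i^{\pm 1}$) have product $(pq)^2$, reducing the dimension to $1$. The self- and cross-pairs of the $\sqrt t\,z_i^{\pm 1}$ rebuild $\Delta_{II}^{(n-1)}(t;z)$ from $\Delta_I^{(n-1)}(z)$; the pairs of the $\sqrt t\,z_i^{\pm 1}$ with $\tfrac{\sqrt{pqt}}{t_r}$ and $u$ supply the target $z$-parameters $\tfrac{\sqrt{pq}t}{t_r}$ and $u\sqrt t$; and the purely parametric pairs yield the prefactor $\prod_{r<s}\Gamma(\tfrac{pqt}{t_rt_s})\prod_r\Gamma(\tfrac{\sqrt{pqt^{n+1}}}{t_r}v^{\pm 1},\tfrac{\sqrt{pqt}u}{t_r})$ together with $\Gamma(a^2,\tfrac{pq}{t}a^2,t^{n/2}uv^{\pm 1})=\Gamma(\tfrac{t^{n+1}}{pq},t^n,t^{n/2}uv^{\pm 1})$, while the factor $\Gamma(t)^{n+1}\Gamma(pq/t)^{n+1}=1$ from step one drops out by reflection. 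The one genuinely delicate simplification is that the $v$-dependent $z$-side terms $\Gamma(\tfrac{\sqrt{pq}}{a}v^{\pm 1}z^{\pm 1})$ from step one and $\Gamma(\sqrt{pq}\,av\,z^{\pm 1},\sqrt{pq}\tfrac{a}{v}z^{\pm 1})$ from step two organize into four reflection pairs of argument-product $pq$ and cancel entirely; since $\tfrac{\sqrt t}{a}=\sqrt{pq/t^n}$, the surviving new $y$-parameter is exactly $\sqrt{pq/t^n}\,v^{\pm 1}$, matching the statement.

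The main obstacle is not the algebra, which is forced once the two Dixon steps are fixed, but the justification of the Fubini interchange between them: I must verify, as in Section \ref{secFubini}, that no path in the associated graph has edge-label product in $p^{\mathbb{Z}_{\le 0}}q^{\mathbb{Z}_{\le 0}}$, so that Lemma \ref{lemmero} licenses swapping the order of integration and guarantees that the meromorphic Dixon identities survive the specialization in which the Dixon parameters are set to values depending on the outer integration variable. That verification, together with a final collection of constants, completes the proof.
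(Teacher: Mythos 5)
Your proposal is correct and follows essentially the same route as the paper: after collapsing the empty $x$-integral, the paper likewise applies a Dixon transformation in $z$ (producing exactly the prefactor $\Gamma(a^2)$, the coupling $\Gamma(\sqrt{t}z^{\pm 1}y^{\pm 1})$, and the new $y$-parameters $\sqrt{pq/t}\,a v^{\pm 1}$), then a Dixon transformation in $y$ reducing dimension $n+1$ to $1$, with the same bookkeeping of reflection cancellations and the same final prefactors. The only cosmetic difference is that the paper defers the Fubini justification to the end of the proof of Proposition \ref{propbluebox} rather than checking it inside the coda.
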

\begin{proof}
Notice that the final $x$ integral in $F_{n+1,0}$ just disappears (becomes 1) as it is $0$-dimensional, so we get 
\begin{align*}
F_{n+1,0}(t_1,\ldots,t_7;u;v;a) &=
\iint \Delta_I^{(1)}(z) \Delta_{II}^{(n+1)}(t;y) \Gamma(\sqrt{\frac{pq}{t}} z^{\pm 1}y^{\pm 1}) 
\prod_{r=1}^7 \Gamma(\frac{\sqrt{pqt} }{t_r} y^{\pm 1})
\Gamma(a v^{\pm 1} z^{\pm 1}) \Gamma(u y^{\pm 1})
\\ & \stackrel{z}{=} \Gamma(a^2)
\iint \Delta_I^{(n-1)}(z) \Delta_{I}^{(n+1)}(y) \Gamma(\sqrt{t} z^{\pm 1}y^{\pm 1}) 
\prod_{r=1}^7 \Gamma(\frac{\sqrt{pqt} }{t_r} y^{\pm 1})
\\ & \qquad \times \Gamma(\frac{\sqrt{pq}}{a} v^{\pm 1} z^{\pm 1}, \sqrt{\frac{pq}{t}} a v^{\pm 1} y^{\pm 1}) \Gamma(u y^{\pm 1})
\end{align*}
Continuing with
\begin{align*}
 & \stackrel{y}{=} \Gamma(a^2, \frac{pqa^2}{t}, \sqrt{\frac{pq}{t}} au v^{\pm 1}) \prod_{1\leq r<s\leq 7} \Gamma(\frac{pqt}{t_rt_s}) \prod_{r=1}^7 
\Gamma(\frac{pqa}{t_r} v^{\pm 1}, \frac{\sqrt{pqt}u}{t_r}) \\& \qquad \times 
\iint \Delta_{II}^{(n-1)}(t;z) \Delta_{I}^{(1)}(y) \Gamma(\sqrt{\frac{pq}{t}} z^{\pm 1}y^{\pm 1}) 
\prod_{r=1}^7 \Gamma(\frac{\sqrt{pq}t}{t_r} z^{\pm 1},   \frac{t_r}{\sqrt{t} } y^{\pm 1})
\\ & \qquad \times \Gamma(\frac{\sqrt{t}}{a} v^{\pm 1} y^{\pm 1}) \Gamma(u\sqrt{t} z^{\pm 1}, \frac{\sqrt{pq}}{u} y^{\pm 1}) \qedhere
\end{align*}
\end{proof}

Now we can combine the three lemma's. First using the first lemma, applying the second lemma $n-2$ times and finishing with the third lemma. Let us first calculate the entire induction part from lemma 2.  Thus we get 
\begin{align*}
F_{3,n-2}(t_r;u;v) & = 
\prod_{k=3}^n \Gamma(t^{n+1-k}, \frac{pq}{t^k}, \frac{t^k}{pq}, t^{\frac12(k-1)} u t^{\frac12(k-3)} v^{\pm 1}, 
\frac{pq}{u t^{\frac12(k-3)} t^{\frac12(k+1)}} v^{\pm 1}, 
\frac{(pq)^2}{t^{k+1}})
\\ & \qquad \times \prod_{1\leq r<s\leq 7} \Gamma(t^{k-3} t_rt_s)
\prod_{r=1}^7 \Gamma( \frac{\sqrt{pq} t^{\frac12(k-3)} t_r}{u t^{\frac12(k-3)} t^{k-\frac12}})
F_{n+1,0}(t_r t^{\frac12 (n-2)}; u t^{\frac12 (n-2)};v)
\\ &= \Gamma(\frac{t^3}{pq}, \frac{(pq)^2}{t^{n+1}}, t uv^{\pm 1}, \frac{pq}{u t^{n-1}} v^{\pm 1}, t, t^2, \frac{pq}{t^n},\frac{pq}{t^{n-1}} )
\\ & \qquad \times \prod_{k=3}^n \prod_{1\leq r<s\leq 7} \Gamma(t^{k-3} t_rt_s)
\prod_{r=1}^7 \Gamma( \frac{\sqrt{pq}  t_r}{u  t^{k-\frac12}})
F_{n+1,0}(t_r t^{\frac12 (n-2)}; u t^{\frac12 (n-2)};v)
\end{align*}
Now we can easily combine the three lemma's to obtain
\begin{align*}
\int & \Delta_{II}^{(n)}(t;z)\prod_{r=1}^8 \Gamma(t_r z^{\pm 1}) \Gamma(\sqrt{\frac{pq}{t}} v^{\pm 1} z^{\pm 1}) 
\\ & \stackrel{Lem 4.2}{=} \Gamma(\sqrt{pq} t_8 t^{n-5/2} v^{\pm 1}, \frac{pq}{t^2},t^n, t^{n-1},\frac{pq}{t}, \frac{(pq)^2}{t^3}) \prod_{1\leq r<s\leq 7}  \Gamma(t_rt_s, tt_rt_s)  \\ & \qquad  \times \prod_{r=1}^7 \Gamma( \sqrt{\frac{pq}{t}} t_r v^{\pm 1}, t_rt_8 t^{n-1},  t_rt_8t^{n-2})  F_{3,n-2}(tt_1, \ldots, tt_7;\frac{\sqrt{pq}}{t_8 t^{n-3/2}};v)
\\ & \stackrel{Lem 4.3}{=} \Gamma(\sqrt{pq} t_8 t^{n-5/2} v^{\pm 1}, \frac{pq}{t^2},t^n, t^{n-1},\frac{pq}{t}, \frac{(pq)^2}{t^3}) \prod_{1\leq r<s\leq 7}  \Gamma(t_rt_s, tt_rt_s)  \\ & \qquad  \times \prod_{r=1}^7 \Gamma( \sqrt{\frac{pq}{t}} t_r v^{\pm 1}, t_rt_8 t^{n-1},  t_rt_8t^{n-2}) \\ & \qquad \times 
\Gamma(\frac{t^3}{pq}, \frac{(pq)^2}{t^{n+1}}, t \frac{\sqrt{pq}}{t_8 t^{n-3/2}} v^{\pm 1}, \sqrt{\frac{pq}{t}}t_8 v^{\pm 1}, t, t^2, \frac{pq}{t^n},\frac{pq}{t^{n-1}} )
\\ & \qquad \times \prod_{k=3}^n \prod_{1\leq r<s\leq 7} \Gamma(t^{k-1} t_rt_s)
\prod_{r=1}^7 \Gamma(t_r t_8 t^{n-k})
F_{n+1,0}(t_r t^{\frac12 n}; \frac{\sqrt{pq}}{t_8 t^{\frac12 n-1/2}};v)
\end{align*}
The calculation continues as 
\begin{align*}
 &\stackrel{Lem 4.4}{=} \Gamma( \frac{(pq)^2}{t^{n+1}},  \sqrt{\frac{pq}{t}}t_8 v^{\pm 1} )
 \prod_{r=1}^7 \Gamma( \sqrt{\frac{pq}{t}} t_r v^{\pm 1}) \prod_{k=1}^n \prod_{1\leq r<s\leq 7} \Gamma(t^{k-1} t_rt_s) \prod_{r=1}^7 \Gamma(t_r t_8 t^{n-k})
\\ & \qquad \times 
\Gamma(\frac{t^{n+1}}{pq}, t^n,  \frac{\sqrt{pqt}}{t_8} v^{\pm 1}) \prod_{1\leq r<s\leq 7} \Gamma(\frac{pq}{t^{n-1}t_rt_s}) \prod_{r=1}^7 
\Gamma(\frac{\sqrt{pqt}}{t_r} v^{\pm 1}, \frac{pq}{t_8 t_rt^{n-1}}) \\& \qquad \times 
\iint \Delta_{II}^{(n-1)}(t;z) \Delta_{I}^{(1)}(y) \Gamma(\sqrt{\frac{pq}{t}} z^{\pm 1}y^{\pm 1}) 
\prod_{r=1}^7 \Gamma(\frac{\sqrt{pq}}{t_rt^{\frac12n-1}} z^{\pm 1},   t_rt^{\frac12(n-1)} y^{\pm 1})
\\ & \qquad \times \Gamma(\sqrt{\frac{pq}{t^n}}  v^{\pm 1} y^{\pm 1}) \Gamma(\frac{\sqrt{pq}}{t_8 t^{\frac12 n-1}} z^{\pm 1}, t_8 t^{\frac12( n-1)} y^{\pm 1})
\\ &= \Gamma( t^n)  \prod_{k=1}^{n-1} \prod_{1\leq r<s\leq 8} \Gamma(t^{k-1} t_rt_s) 
  \\& \qquad \times 
\iint \Delta_{II}^{(n-1)}(t;z) \Delta_{I}^{(1)}(y) \Gamma(\sqrt{\frac{pq}{t}} z^{\pm 1}y^{\pm 1}) 
\prod_{r=1}^8 \Gamma(\frac{\sqrt{pq}}{t_rt^{\frac12n-1}} z^{\pm 1},   t_rt^{\frac12(n-1)} y^{\pm 1})
 \Gamma(\sqrt{\frac{pq}{t^n}}  v^{\pm 1} y^{\pm 1}) 
\end{align*}
Where we simplified the expression in the final step.

The final thing to check is whether we were justified in exchanging the order of integration all those times. This is a case where we can not find parameters for which we can always use unit circle contours. For example, in the final expression of the proof of Lemma \ref{lem41}, this would imply that $|tt_r|<1$ for $1\leq r\leq 7$, $|t_8/t|<1$ and $|t^3/pq|<1$ (thus $|t|<1$). However, the balancing condition tells us that 
\[
\left(\frac{t^3}{pq} \right)^2 \frac{t_8}{t} \prod_{r=1}^7 tt_r = t^{12-(2n-3)} = t^{15-2n}.
\]
For $n\geq 8$ the left hand side is less than 1 in absolute value, while the right hand side is more than 1 in absolute value. 

Thus we need to show that in all cases the prefactor of the integral in Lemma \ref{lemmero} does not vanish. First we consider what kind of labels for edges we get (i.e. what cross terms there are). The edges coming from $\Delta_{I\!I}$-terms are always $t$, whereas the other labels are either $\sqrt{t}$ or $\sqrt{pq/t}$. In particular any product of these (associated to a closed path in the graph) will either be a strictly positive power of $(pq)$ times some power of $t$, or a strictly positive power of $t$. In particular it will never be in $p^{\mathbb{Z}_{\leq 0}}q^{\mathbb{Z}_{\leq 0}}$. 

Now, we first consider the products which involve a term of the form $t_r$ ($1\leq r\leq 8$). A quick review shows that these do not appear in cross terms, thus they only appear as labels to half-edges. In fact those labels are all of the form $t^k t_r$ or $t^k \sqrt{pq}/t_r$. Since the product of two $t_r$'s is independent of $p$ and $q$, the only option to still get a negative power of $p$ times a negative power of $q$ involving one of these terms is if we multiply a $t^k t_r$ term with a $t^k \sqrt{pq}/t_r$ term. However than we obtain a positive power of $(pq)$ times a product of labels of edges (which we classified above). In this case we also find that the resulting prefactor terms never vanish.

The terms involving $u$, in the later lemma's, are of the form $t^k u$, $t^k\sqrt{pq}/u$ and $t^k pq/u$ for some exponents $k$. The same argument as above shows that these will never lead to problems.

Finally we are left with the labels of half-edges which do not involve either $t_r$ or $u$. These always involve the parameter $v$. In the case where we only have $t^k v^{\pm 1}$ and 
$(pq)^k t^l v^{\pm 1}$ for some positive powers $k$ (though typically negative $l$), we can use the argument as before to show that we are fine. However, in the definition of $F_{k,l}$ and 
all expressions in Lemma \ref{lem43} we have terms $\sqrt{t^k/pq}v^{\pm 1}$ for $k\geq 3$. The only paths that could give problems are those that start with $\sqrt{t^k/pq} v^{+1}$ and end with $\sqrt{t^k/pq} v^{-1}$ (or vice versa). The associated product is either $t^k/(pq)$ times some positive power of $t$, which will never be in $p^{\mathbb{Z}_{\leq 0}} q^{\mathbb{Z}_{\leq 0}}$, or $t^k/pq$ times some positive power of $pq$ times an arbitrary power of $t$. This might still become 1, but fortunately we see that since, $\sqrt{pq/t}$ is the only label with negative powers of $t$ of an edge, and since $k\geq 3$, the sum of the power of $pq$ plus that of $t$ is strictly positive, and thus never 0.
\end{proof}

\section{The almost-Selberg transformation}\label{secas}
The second transformation we consider is reminiscent of the Selberg transformation \eqref{eqtrafo2}. It maps a Selberg integral with 10 parameters (thus 2 more than the integral for the actual Selberg transformation), two of which multiply to $pq/t$ to a double integral, an $n$-variate Selberg integral and a univariate integral. 

\begin{prop}\label{propas}
The following transformation holds
\begin{align*}
& \int  \Delta_{II}^{(n)}(t;z) \Gamma( \sqrt{\frac{pq}{t}} v^{\pm 1} z^{\pm 1})\prod_{r=1}^4 
\Gamma(t_r z^{\pm 1}, u_r z^{\pm 1}) 
\\ & = 
\prod_{i=0}^{n-1} \Gamma(t^i t_rt_s) \prod_{i=0}^{n-2} \Gamma(t^iu_ru_s)
\prod_{r=1}^4 \Gamma( \sqrt{\frac{pq}{t}} u_r v^{\pm 1}) 
\Gamma ( \frac{pq}{s^2\sqrt{t}}) 
\\ & \qquad \times 
\iint \Delta_{II}^{(n)}(t;z) \Delta_I^{(1)}(y) \Gamma( \sqrt{\frac{pq}{t}} y^{\pm 1} z^{\pm 1}, st^{\frac14}v^{\pm 1} y^{\pm 1})
\prod_{r=1}^4 \Gamma( \frac{t_r}{st^{\frac14}} z^{\pm 1}) \prod_{r=1}^4 \Gamma( st^{\frac14}u_r z^{\pm 1}, \frac{\sqrt{pq}t^{\frac14}}{su_r} y^{\pm 1})
\\ &= 
\prod_{i=0}^{n-1} \Gamma(t^i t_rt_s, t^i u_ru_s) 
\iint \Delta_{I}^{(n)}(z) \Delta_I^{(n)}(y) \Gamma( \sqrt{t} y^{\pm 1} z^{\pm 1}, \frac{\sqrt{pq} s}{t^{\frac14}}v^{\pm 1} z^{\pm 1}, 
\frac{\sqrt{pq}}{st^{\frac14}} v^{\pm 1} y^{\pm 1})
\prod_{r=1}^4 \Gamma( \frac{t_r}{st^{\frac14}} z^{\pm 1},\frac{su_r}{t^{\frac14}}y^{\pm 1})
\end{align*}
where $u_r,t_r\in \mathbb{C}^*$ satisfy the balancing conditions
\begin{equation}\label{eqdefs}
\prod_r u_r = \frac{pq}{t^{n-\frac32}s^2}, \qquad 
\prod_r t_r = \frac{pqs^2}{t^{n-\frac32}}.
\end{equation}
\end{prop}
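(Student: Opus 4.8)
My plan is to establish the two displayed equalities by separate chains of the known transformations, and I would treat the second equality first because I expect it to be genuinely short.

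\emph{From the middle double integral to the last one.} I expect this to be a single elliptic Dixon transformation carried out in the variable $y$. In the middle expression $y$ is univariate and the $z_j$ enter its integrand only through $\Gamma(\sqrt{pq/t}\,y^{\pm1}z^{\pm1})$, so they may be frozen as parameters. The $y$-integral then carries the $2n+6$ parameters $\sqrt{pq/t}\,z_j^{\pm1}$ ($1\le j\le n$), $st^{1/4}v^{\pm1}$ and $\sqrt{pq}\,t^{1/4}/(su_r)$ ($1\le r\le4$), and using \eqref{eqdefs} one checks their product is $(pq)^{n+1}$, so Dixon sends this univariate integral to an $n$-dimensional type-$I$ integral. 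Under $t_r\mapsto\sqrt{pq}/t_r$ the parameters become exactly $\sqrt t\,z_j^{\pm1}$, $\sqrt{pq}/(st^{1/4})\,v^{\pm1}$ and $su_r/t^{1/4}$, matching the last line of Proposition \ref{propas}. The reason this closes cleanly is that the Dixon prefactor $\prod_{r<s}\Gamma(t_rt_s)$, restricted to the pairs among the $\sqrt{pq/t}\,z_j^{\pm1}$, produces $\prod_{j<k}\Gamma((pq/t)z_j^{\pm1}z_k^{\pm1})=\prod_{j<k}\Gamma(tz_j^{\pm1}z_k^{\pm1})^{-1}$ by the reflection equation, which is precisely the factor turning $\Delta_{II}^{(n)}(t;z)=\Gamma(t)^n\Delta_I^{(n)}(z)\prod_{j<k}\Gamma(tz_j^{\pm1}z_k^{\pm1})$ into $\Gamma(t)^n\Delta_I^{(n)}(z)$. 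The remaining prefactor terms are $z$-independent and I would collect them into the stated $\prod_{i=0}^{n-1}\Gamma(t^it_rt_s,t^iu_ru_s)$.

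\emph{The first equality} is the real content, and I would prove it by induction on $n$. The key observation is that the two balancing conditions \eqref{eqdefs} multiply to $t^{2n-3}\prod_{r=1}^4 t_r\prod_{r=1}^4 u_r=(pq)^2$, which is exactly the hypothesis of the induction enabler, Proposition \ref{propbluebox}, applied to the left-hand side with the eight parameters $t_1,\dots,t_4,u_1,\dots,u_4$ and the distinguished pair $\sqrt{pq/t}\,v^{\pm1}$. Applying Proposition \ref{propbluebox} in the variable $z$ reduces the $n$-dimensional Selberg integral to a double integral whose inner integral is again an $(n-1)$-dimensional ten-parameter Selberg integral of the same shape, with the role of the distinguished pair now played by the newly introduced univariate variable. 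I would likewise apply Proposition \ref{propbluebox} to the $z$-integral of the middle expression, which (as one checks directly from \eqref{eqdefs}) is itself a valid ten-parameter Selberg integral. The induction hypothesis, Proposition \ref{propas} in dimension $n-1$, then identifies the two reduced integrands underneath the surviving univariate integration, and reassembling the pieces (a reverse use of the enabler together with the reflection equation) returns the $n$-dimensional middle expression. The base case is the univariate statement, which may be taken directly from the univariate results of \cite{RainsLittlewood}.

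\emph{Main obstacle.} I expect the genuine difficulties to be twofold. The first is bookkeeping: tracking the long prefactors produced by each application of Proposition \ref{propbluebox} and the Dixon step, and checking that they telescope into the compact products $\prod_{i=0}^{n-1}\Gamma(t^it_rt_s)$, $\prod_{i=0}^{n-2}\Gamma(t^iu_ru_s)$, $\prod_r\Gamma(\sqrt{pq/t}\,u_rv^{\pm1})$ and $\Gamma(pq/(s^2\sqrt t))$; the conditions \eqref{eqdefs} must be used repeatedly to reach this normal form. The second, and the one the paper flags as most delicate, is justifying every interchange of the order of integration, since the parameters cannot all be placed inside the unit disc simultaneously (the same obstruction encountered in Proposition \ref{propbluebox}). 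For this I would invoke Lemma \ref{lemmero}: for each configuration I would draw the associated graph and verify that every path-label, here built only from the cross-term bases $\sqrt{pq/t}$, $\sqrt t$, $t$, $st^{1/4}$ and the half-edge labels involving $t_r,u_r,v$, avoids $p^{\mathbb{Z}_{\le0}}q^{\mathbb{Z}_{\le0}}$, exactly as in the Fubini discussion closing the proof of Proposition \ref{propbluebox}. Only after this verification is the induction, and hence the claimed chain of meromorphic identities, legitimate.
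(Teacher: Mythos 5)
Your overall architecture matches the paper's: the second equality is indeed a single Dixon transformation in $y$ (your check of the balancing condition and of how the Dixon prefactor converts $\Delta_{II}^{(n)}(t;z)$ into $\Delta_I^{(n)}(z)$ is correct), and the first equality is proved by induction on $n$ using the induction enabler. But your induction step has a genuine gap. After applying Proposition \ref{propbluebox} to the left-hand side and then the $(n-1)$-dimensional induction hypothesis to the inner $z$-integral, the resulting triple integral is \emph{not} the one obtained by applying Proposition \ref{propbluebox} to the $z$-integral of the middle expression: the $y$-dependent parts disagree. In the first, the univariate $y$-integral carries the eight parameters $st^{1/4}x^{\pm1}$, $\sqrt{pq/t^{n}}\,v^{\pm1}$, $u_rt^{(n-1)/2}$, while in the second it carries $\sqrt{pq/t^{n}}\,x^{\pm1}$, $st^{1/4}v^{\pm1}$, $\sqrt{pq}\,t^{1/4}/(su_r)$. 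These two configurations are related by a univariate elliptic Selberg ($W(E_7)$) transformation in $y$ --- exactly the step $\stackrel{W(E_7):y}{=}$ in the paper's chain, which is also where the prefactor $\prod_{r}\Gamma(\sqrt{pq/t}\,u_rv^{\pm1})$ is produced. Your claim that ``the induction hypothesis then identifies the two reduced integrands'' is therefore false as stated; without inserting this intermediate transformation the induction does not close.

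Two smaller corrections. The base case cannot simply be ``taken directly from \cite{RainsLittlewood}'': the univariate almost-Selberg identity is not one of Rains's univariate results, and the paper proves it by an explicit two-step computation (a $W(E_7)$ transformation in $y$ followed by a Dixon evaluation in $z$) --- note that already here the $W(E_7)$ step you omit is visible. Finally, your Fubini worry is misplaced for this proposition: unlike for Proposition \ref{propbluebox}, here one \emph{can} choose parameters (e.g.\ $|pq|\ll|t^n|$, $|t|<1$, $s=1$, $u_r=(pq)^{1/4}t^{-n/4+3/8}$) for which every contour is the unit circle, so the interchanges of integration follow from the standard argument plus analytic continuation, and Lemma \ref{lemmero} is not needed.
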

We dubbed this formula the almost-Selberg transformation, because if we remove the terms with $v$ and the $y$-integral on the second expression, and multiply $s$ by right power of $t$ it is the Selberg transformation. Note that the final equality is just a case of applying a Dixon transformation on the $y$'s. The final expression is both uglier (more integrals) and prettier (more apparent symmetries) than the second expression, so we decided to print them both.
\begin{proof}
The proof is  prototypical of how we can use the induction enabler to prove transformations for Selberg integrals with 10 parameters, two of which multiply to $pq/t$. The general idea is of course to use induction and use the induction enabler transformation to turn the $n$-dimensional integral into an $(n-1)$-dimensional one. Proving the univariate case is an relatively easy calculation. 
For the $n$-dimensional case we first apply the induction enabler on the left hand side, use the $(n-1)$-dimensional version of the transformation on the resulting $z$-integral, do some appropriate transformation on the remaining integrals (in this case a Selberg transformation), after which we can do an inverse induction enabler transformation to end up at the desired right hand side. 

Univariately we calculate as follows (starting with the right hand side and going back)
\begin{align*}
\iint & \Delta_{II}^{(1)}(t;z) \Delta_I^{(1)}(y) \Gamma( \sqrt{\frac{pq}{t}} y^{\pm 1} z^{\pm 1}, st^{\frac14}v^{\pm 1} y^{\pm 1})
\prod_{r=1}^4 \Gamma( \frac{t_r}{st^{\frac14}} z^{\pm 1}) \prod_{r=1}^4 \Gamma( st^{\frac14}u_r z^{\pm 1}, \frac{\sqrt{pq}t^{\frac14}}{su_r} y^{\pm 1}) 
\\ &\stackrel{y:W(E_7)}{=}
\prod_{r=1}^4 \Gamma( \frac{\sqrt{pqt}}{u_r} v^{\pm 1})
\iint  \Delta_{II}^{(1)}(t;z) \Delta_I^{(1)}(y) \Gamma( s t^{1/4} y^{\pm 1} z^{\pm 1}, \sqrt{\frac{pq}{t}}v^{\pm 1} y^{\pm 1})
\prod_{r=1}^4 \Gamma( \frac{t_r}{st^{\frac14}} z^{\pm 1}) \prod_{r=1}^4 \Gamma(u_r y^{\pm 1}) 
\\ & \stackrel{z}{=}
\Gamma( s^2 \sqrt{t})
\prod_{r=1}^4 \Gamma( \frac{\sqrt{pqt}}{u_r} v^{\pm 1})
\prod_{1\leq r<s\leq 4} \frac{1}{\Gamma( t_rt_s)}
\int  \Delta_{II}^{(1)}(t;y) \Gamma( \sqrt{\frac{pq}{t}}v^{\pm 1} y^{\pm 1})
\prod_{r=1}^4 \Gamma( t_r y^{\pm 1}) \prod_{r=1}^4 \Gamma(\frac{\sqrt{pq}}{u_r} y^{\pm 1}) 
\end{align*}

Now suppose the theorem holds for $n-1$. Then we can calculate
\begin{align}
\int & \Delta_{II}^{(n)}(t;z) \Gamma( \sqrt{\frac{pq}{t}} v^{\pm 1} z^{\pm 1})\prod_{r=1}^4 
\Gamma(t_r z^{\pm 1}, u_r z^{\pm 1}) 
\nonumber \\ &\stackrel{{\color{blue} \boxtimes } :z  }{=} 
\Gamma(t^n) \prod_{i=1}^{n-1} \prod_{1\leq r<s\leq 4} \Gamma(t^{i-1} t_rt_s, t^{i-1}u_ru_s) 
\prod_{r=1}^4 \prod_{s=1}^4 \Gamma(t^{i-1} t_ru_s) 
\iint \Delta_{II}^{(n-1)}(t;z) \Delta_I^{(1)}(y) 
\label{eqbbb}  \\ & \qquad \times 
\Gamma(\sqrt{\frac{pq}{t}} y^{\pm 1} z^{\pm 1}, \sqrt{\frac{pq}{t^n}} v^{\pm 1} y^{\pm 1}) \prod_{r=1}^4 \Gamma( \frac{\sqrt{pq}}{t_r t^{\frac12 n-1}} z^{\pm 1}, \frac{\sqrt{pq}}{u_r t^{\frac12 n-1}} z^{\pm 1}, t_r t^{\frac12(n-1)} y^{\pm 1},u_rt^{\frac12(n-1)} y^{\pm 1})
\nonumber \\ & \stackrel{IH:z}{=} 
\Gamma(t^n) \prod_{1\leq r<s\leq 4} \Gamma( t_rt_s)  \prod_{i=1}^{n-1}
\prod_{r=1}^4 \prod_{s=1}^4 \Gamma(t^{i-1} t_ru_s) 
\Gamma( \frac{pq}{ s^2 \sqrt{t} })
\iiint \Delta_{II}^{(n-1)}(t;z) \Delta_I^{(1)}(y) \Delta_I^{(1)}(x) \Gamma(\sqrt{\frac{pq}{t}} x^{\pm 1} z^{\pm 1})
\nonumber \\ & \qquad \times 
\Gamma( st^{\frac14} y^{\pm 1} x^{\pm 1}, \sqrt{\frac{pq}{t^n}} v^{\pm 1} y^{\pm 1}) \prod_{r=1}^4 \Gamma( \frac{\sqrt{pq} s}{t_r t^{\frac12 n-\frac54}} z^{\pm 1}, \frac{\sqrt{pq}}{u_r t^{\frac12 n-\frac34}s } z^{\pm 1}, \frac{t_r t^{\frac12 n-\frac34} }{s  } x^{\pm 1}, u_rt^{\frac12(n-1)} y^{\pm 1})
\label{eqIH} \\ & \stackrel{W(E_7):y}{=}
\Gamma(t^n) \prod_{1\leq r<s\leq 4} \Gamma( t_rt_s)  \prod_{i=1}^{n-1}
\prod_{r=1}^4 \prod_{s=1}^4 \Gamma(t^{i-1} t_ru_s) 
\Gamma( \frac{pq}{ s^2 \sqrt{t} })
\prod_{r=1}^4 \Gamma( \sqrt{\frac{pq}{t}} u_r v^{\pm 1})
\nonumber \\ & \qquad \times 
\iiint \Delta_{II}^{(n-1)}(t;z) \Delta_I^{(1)}(y) \Delta_I^{(1)}(x) 
\Gamma(\sqrt{\frac{pq}{t}} x^{\pm 1} z^{\pm 1},\sqrt{\frac{pq}{t^n}}  y^{\pm 1} x^{\pm 1}, st^{\frac14}  v^{\pm 1} y^{\pm 1}) 
\nonumber \\ & \qquad \times \prod_{r=1}^4 \Gamma( \frac{\sqrt{pq} s}{t_r t^{\frac12 n-\frac54}} z^{\pm 1}, \frac{\sqrt{pq}}{u_r t^{\frac12 n-\frac34}s } z^{\pm 1}, \frac{t_r t^{\frac12 n-\frac34} }{s  } x^{\pm 1}, s u_rt^{\frac12n-\frac14} x^{\pm 1},\frac{\sqrt{pq} t^{\frac14}}{s u_r} y^{\pm 1})
\label{eqX} \\ &\stackrel{ {\color{blue} \boxtimes } :z  }{=} 
\prod_{i=0}^{n-1}  \prod_{1\leq r<s\leq 4} \Gamma( t^{i} t_rt_s)
\prod_{i=0}^{n-2} \prod_{1\leq r<s\leq 4} \Gamma(t^{i} u_ru_s  )
\Gamma( \frac{pq}{ s^2 \sqrt{t} })
\prod_{r=1}^4 \Gamma( \sqrt{\frac{pq}{t}} u_r v^{\pm 1})
\nonumber \\ & \qquad \times 
\iint \Delta_{II}^{(n)}(t;z) \Delta_I^{(1)}(y) 
\Gamma(\sqrt{\frac{pq}{t}} y^{\pm 1} z^{\pm 1},st^{\frac14}  v^{\pm 1} y^{\pm 1}) 
 \prod_{r=1}^4 \Gamma(\frac{t_r  }{s t^{\frac14} } z^{\pm 1}, s u_rt^{\frac14} z^{\pm 1},\frac{\sqrt{pq} t^{\frac14}}{s u_r} y^{\pm 1}) 
 \nonumber 
\end{align}
Here we use the induction hypothesis (in the $IH$ step) for $z$ with $\frac{\sqrt{pq}}{u_r t^{\frac12 n-1}}$ in the role of the $t_r$'s. 

Applying Fubini is not a problem in this proof, as we can find parameters for which we can always use unit circle contours. Indeed if $|pq|\ll |t^n|$, $|t|<1$ and $s=1$, we can choose $u_r = (pq)^{\frac14} t^{-\frac14n+\frac38}$, so both $u_r$, and $\sqrt{pq}/u_r$ are very small, and thus less than 1, even if multiplied with some power of $t$. The same holds for the $t_r$. The remaining parameters are $\sqrt{pq/t^n}$, $\sqrt{pq/t}$ (which are also very small), $t$ and $st^{\frac14}$, the latter are fine as $|t|<1$ by assumption. Thus we can use this proof in an non-empty open set of parameters, and by analytical extension it holds in general.
\end{proof}

We will actually mostly use the following lemma, which follows directly from the proof of the almost Selberg transformation.
\begin{lemma}\label{lem52}
Assuming the  conditions of Proposition \ref{propas} we have the following equation
\begin{align*}
& \int  \Delta_{I\!I}^{(n)}(t;z) \Gamma(\sqrt{\frac{pq}{t}} v^{\pm 1} z^{\pm 1})
\prod_{r=1}^4 \Gamma(t_r z^{\pm 1}, u_rz^{\pm 1})
\\ &=\Gamma(t^n,t^{n-1}, \frac{pq}{s^2\sqrt{t}}) 
\prod_{1\leq r<s\leq 4} \prod_{k=0}^{n-2} \Gamma(t^k t_rt_s) \prod_{k=0}^{n-3} 
\Gamma(t^k u_ru_s)
\prod_{r,s=1}^4 \Gamma(t_r u_s)
\\ & \qquad \times 
\iiiint \Delta_{I\!I}^{(n-2)}(t;z) \Delta_I^{(1)}(y) \Delta_I^{(1)}(x) \Delta_I^{(1)}(w) \Gamma(\sqrt{\frac{pq}{t}} w^{\pm 1}z^{\pm 1},
\sqrt{\frac{pq}{t^{n-1}}} x^{\pm 1} w^{\pm 1},
 st^{\frac14}  x^{\pm 1}y^{\pm 1},\sqrt{\frac{pq}{t^n}} v^{\pm 1} y^{\pm 1})
\\ & \qquad \times 
\prod_{r=1}^4 \Gamma( \frac{t_r t^{\frac14}}{ s} z^{\pm 1}, 
s u_r t^{ \frac34}  z^{\pm 1},
 \frac{\sqrt{pq} s t^{\frac14}}{t_r}  w^{\pm 1}, 
\frac{\sqrt{pq}}{s u_r t^{ \frac14} }  w^{\pm 1},
\frac{t_r}{s} t^{\frac12 n-\frac34} x^{\pm 1},  u_r  t^{\frac12n-\frac12}  y^{\pm 1}) 
\end{align*}
\end{lemma}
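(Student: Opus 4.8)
The plan is to read Lemma \ref{lem52} straight off the proof of Proposition \ref{propas}. The four-fold integral in the lemma is exactly what one obtains by halting that proof at the intermediate expression \eqref{eqIH} — the output of the $IH\!:\!z$ step — and then applying the induction enabler, Proposition \ref{propbluebox}, one further time in the variable $z$. So I would reprove nothing up to \eqref{eqIH}, taking that identity as given, and only carry out this single extra step.

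Concretely, in \eqref{eqIH} the inner $z$-integral is an $(n-1)$-dimensional Selberg integral carrying the eight half-edge parameters $\frac{\sqrt{pq}s}{t_r t^{\frac12 n-\frac54}}$ and $\frac{\sqrt{pq}}{u_r t^{\frac12 n - \frac34}s}$ ($r=1,\dots,4$), together with the single coupling $\Gamma(\sqrt{pq/t}\,x^{\pm 1}z^{\pm 1})$ joining $z$ to the variable $x$. This is precisely the shape required by Proposition \ref{propbluebox} in dimension $n-1$, with $x$ in the role of the distinguished variable $v$. First I would verify the enabler's balancing condition: writing $T_1,\dots,T_8$ for these eight parameters, the relations \eqref{eqdefs} give $\prod_{r=1}^8 T_r = (pq)^2/t^{2n-5}$, so that $t^{2(n-1)-3}\prod_r T_r=(pq)^2$ as required. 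Applying the enabler then replaces $\Delta_{II}^{(n-1)}(t;z)$ by $\Delta_{II}^{(n-2)}(t;z)\,\Delta_I^{(1)}(w)$, creates the couplings $\Gamma(\sqrt{pq/t}\,w^{\pm 1}z^{\pm 1})$ and $\Gamma(\sqrt{pq/t^{n-1}}\,x^{\pm 1}w^{\pm 1})$, and splits each $T_r$ into a half-edge on $z$ and one on $w$; a short computation gives the half-edges $\frac{t_r t^{1/4}}{s},\, s u_r t^{3/4}$ on $z$ and $\frac{\sqrt{pq}st^{1/4}}{t_r},\, \frac{\sqrt{pq}}{su_r t^{1/4}}$ on $w$ exactly as in the lemma. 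Every term of \eqref{eqIH} not involving $z$ — the couplings $\Gamma(st^{1/4}x^{\pm 1}y^{\pm 1},\sqrt{pq/t^n}v^{\pm 1}y^{\pm 1})$ and the half-edges $\frac{t_r}{s}t^{\frac12 n-\frac34}$ on $x$ and $u_r t^{\frac12 n-\frac12}$ on $y$ — passes through untouched, matching the lemma verbatim.

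The one genuinely fiddly point, which I expect to be the main obstacle, is the prefactor bookkeeping. The enabler contributes $\Gamma(t^{n-1})\prod_{k=1}^{n-2}\prod_{1\le r<s\le 8}\Gamma(t^{k-1}T_rT_s)$, and this must be multiplied onto the prefactor already standing in \eqref{eqIH} and collapsed to the clean product in the lemma; the reflection equation $\Gamma(z,pq/z)=1$ does the work. Splitting the products $T_rT_s$ by type and using \eqref{eqdefs} (so that $t_1t_2t_3t_4=pqs^2/t^{n-3/2}$ and $u_1u_2u_3u_4=pq/(t^{n-3/2}s^2)$), the ``$tt$'' family becomes $\prod_{k=1}^{n-2}\prod_{r<s}\Gamma(t^k t_rt_s)$, which combines with the $k=0$ factor $\prod_{r<s}\Gamma(t_rt_s)$ already in \eqref{eqIH} to give the lemma's range $\prod_{k=0}^{n-2}$, while the ``$uu$'' family becomes $\prod_{k=0}^{n-3}\prod_{r<s}\Gamma(t^k u_ru_s)$ directly. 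The mixed ``$tu$'' family gives $\prod_{k=1}^{n-2}\prod_{r,s}\Gamma\!\big(pq\,t^{k-n+1}/(t_ru_s)\big)$, which after reflection equals $\prod_{k=1}^{n-2}\prod_{r,s}\Gamma(t^{n-1-k}t_ru_s)^{-1}$; this telescopes against the accumulated $\prod_{i=1}^{n-1}\prod_{r,s}\Gamma(t^{i-1}t_ru_s)$ from \eqref{eqIH}, leaving only $\prod_{r,s=1}^4\Gamma(t_ru_s)$. Together with the scalar gammas $\Gamma(t^n,t^{n-1},pq/(s^2\sqrt t))$ this reproduces the lemma's prefactor exactly.

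Finally, interchanging the order of integration is justified just as in the proof of Proposition \ref{propas}: the derivation reuses the same chain of transformations through \eqref{eqIH} and adds only one further application of Proposition \ref{propbluebox}, itself an identity of meromorphic functions. As there, one can place the parameters in an open region (for instance $|pq|\ll|t^n|$, $|t|<1$, $s=1$, with the $t_r,u_r$ of order $(pq)^{1/4}$) where every contour is the unit circle and then continue analytically, or equivalently check through Lemma \ref{lemmero} that no path in the associated graph has label-product in $p^{\mathbb{Z}_{\le 0}}q^{\mathbb{Z}_{\le 0}}$.
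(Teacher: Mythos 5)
Your proposal is correct and is essentially identical to the paper's own proof, which likewise takes the identity relating the left-hand side to the intermediate expression \eqref{eqIH} from the proof of Proposition \ref{propas} and then applies one further induction enabler transformation in the $z$-variable; your verification of the balancing condition, the resulting half-edge parameters, and the prefactor telescoping (in particular the mixed $t_ru_s$ family collapsing to $\prod_{r,s}\Gamma(t_ru_s)$ via the reflection equation) all check out.
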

\begin{proof}
We take the equality obtained in the proof of the almost Selberg transformation, which relates the left hand side to the result of applying the induction hypothesis (i.e. to the expression \eqref{eqIH}) and subsequently apply an induction enabler transformation (in the $z$-variable) to obtain the desired right hand side.
\end{proof}

\section{The first quadratic transformation}
In this section we prove the first of two quadratic transformations. This is Conjecture Q3 in \cite{RainsLittlewood}, in the case $\lambda=0$. The univariate case was already proven by Rains in that paper. This quadratic transformation is an equation between $(p,q)$-elliptic hypergeometric functions and $(p,q^2)$-elliptic hypergeometric functions. Recall the notation that 
$\tilde \Gamma(x) = \Gamma_{p,q^2}(x)$, while $\Gamma(x)=\Gamma_{p,q}(x)$, with similar notations for the kernels $\Delta$. The gamma functions for these pairs of parameters are related by 
\[
\Gamma(x)=  \tilde \Gamma(x,qx), \qquad \qquad 
\tilde \Gamma(x^2) = \Gamma_{p^2,q^2}(x^2,px^2) = \Gamma(\pm x, \pm p^{1/2} x),
\]
which follows immediately from the product expression for the elliptic gamma function. We will use these equations constantly without notification.

The result is the following 
\begin{thm}\label{thmq5}
Under the balancing condition $t^{2n-1}t_1t_2t_3t_4=pq^2$ the equation 
\begin{align*}
\int  \Delta_{I\!I}^{(n)}(t;z)  &
\prod_{r=1}^4 \Gamma(t_r z^{\pm 1}) \prod_{r=1}^2 \Gamma(\sqrt{\frac{pq}{t}} v^{\pm 1} z^{\pm 1})
\tilde \Gamma(t z^{\pm 2})
\\ & = 
\prod_{i=0}^{n-1} \prod_{1\leq r<s\leq 4}\Gamma(t^{2i} t_r t_s)
\int \tilde \Delta_{I\!I}^{(n)}(t^2;z)
\prod_{r=1}^4 \Gamma(\sqrt{\frac{t}{q}}  t_r z^{\pm 1}) \prod_{r=1}^2 \tilde \Gamma( \sqrt{\frac{pq^2}{t^2}} v^{\pm 1} z^{\pm 1})
\end{align*}
holds. 
\end{thm}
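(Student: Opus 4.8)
The plan is to induct on $n$, taking the univariate identity (established by Rains) as the base case. The key first observation is that the anomalous $(p,q^2)$-factor $\tilde\Gamma(tz^{\pm2})$ on the left is exactly what converts the integrand into an ordinary $(p,q)$-Selberg integrand of the kind handled by the induction enabler. Indeed, the doubling relation $\tilde\Gamma(x^2)=\Gamma(\pm x,\pm p^{1/2}x)$ with $x=\sqrt t\,z$ gives $\tilde\Gamma(tz^{\pm2})=\Gamma(\pm\sqrt t\,z^{\pm1},\pm\sqrt{pt}\,z^{\pm1})$, so the left-hand side becomes a Selberg integral with kernel $\Delta_{II}^{(n)}(t;z)$, the eight parameters $t_1,t_2,t_3,t_4,\pm\sqrt t,\pm\sqrt{pt}$, and the pair $\sqrt{pq/t}\,v^{\pm1}$ that multiplies to $pq/t$. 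The product of those eight parameters is $pt^2\,t_1t_2t_3t_4$, so the induction-enabler balancing $t^{2n-3}\prod_{r=1}^8 t_r=(pq)^2$ is precisely the hypothesis $t^{2n-1}t_1t_2t_3t_4=pq^2$; thus Proposition~\ref{propbluebox} applies verbatim.

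Applying the induction enabler in $z$ peels off one dimension and produces, up to an explicit $\Gamma$-prefactor, a double integral of an $(n-1)$-dimensional $(p,q)$-Selberg integral (same kernel parameter $t$) against a univariate integral, coupled through a $\Gamma(\sqrt{pq/t}\,z^{\pm1}y^{\pm1})$ cross term. The quadratic mechanism now reverses itself: the four parameters descended from $\tilde\Gamma(tz^{\pm2})$ reappear, after the enabler's substitution $t_r\mapsto\sqrt{pq}\,t_r^{-1}t^{1-n/2}$, in the shape $\{\pm c,\pm p^{1/2}c\}$ with $c=\sqrt q\,t^{(1-n)/2}$, and by the reverse doubling $\Gamma(\pm c\,z^{\pm1},\pm p^{1/2}c\,z^{\pm1})=\tilde\Gamma(c^2z^{\pm2})$ they recombine into a single $(p,q^2)$-diagonal factor $\tilde\Gamma(qt^{1-n}z^{\pm2})$. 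Meanwhile the cross term $\Gamma(\sqrt{pq/t}\,z^{\pm1}y^{\pm1})$ plays the role of the $v$-pair for the inner integral, with the outer variable $y$ in the role of $v$ (exactly as in the proof of Proposition~\ref{propas}). Once the diagonal parameter has been brought to the value $t$ demanded by the induction hypothesis --- by an intervening Selberg/almost-Selberg transformation multiplying the four diagonal-origin parameters by $t^{n/2}q^{-1/2}$, which simultaneously rescales the remaining four to $\sqrt p\,q\,t_r^{-1}t^{1-n}$ and preserves the balancing --- the $(n-1)$-dimensional integral is literally the left-hand side of the theorem in dimension $n-1$, and the induction hypothesis flips its base from $(p,q)$ to $(p,q^2)$.

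After the induction hypothesis one is left with an $(n-1)$-dimensional $(p,q^2)$-integral (kernel $\tilde\Delta_{II}^{(n-1)}(t^2;z)$) still coupled to a univariate integral through a $\tilde\Gamma(\sqrt{pq^2/t^2}\,z^{\pm1}y^{\pm1})$ term. Exactly as in the coda of Proposition~\ref{propas}, one rebuilds the $n$-dimensional object by applying the $(p,q^2)$-version of the induction enabler in reverse, interspersed with Dixon and Selberg transformations to line up the univariate factors, the parameters $\Gamma(\sqrt{t/q}\,t_rz^{\pm1})$ and the $v$-pair $\tilde\Gamma(\sqrt{pq^2/t^2}\,v^{\pm1}z^{\pm1})$ with the right-hand side; the relations $\Gamma(x)=\tilde\Gamma(x,qx)$ are used repeatedly to transport individual factors between the two bases. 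The accumulated $\Gamma$-prefactor is then shown to telescope to $\prod_{i=0}^{n-1}\prod_{1\leq r<s\leq 4}\Gamma(t^{2i}t_rt_s)$, which is the one piece of genuinely routine (if lengthy) bookkeeping.

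I expect two points to be the real difficulty. The first, and the crux, is arranging that the recombined diagonal parameter agrees \emph{exactly} with the kernel parameter demanded by the induction hypothesis: the value produced naturally is $qt^{1-n}$ rather than $t$, and closing the recursion cleanly probably requires inducting on a one-parameter-larger family in which the diagonal parameter $\tau$ (balancing $t^{2n-3}\tau^2t_1t_2t_3t_4=pq^2$) is decoupled from $t$, with the stated theorem recovered at $\tau=t$; this is the analogue of the auxiliary family $F_{k,l}$ used to prove Proposition~\ref{propbluebox}. The second is the justification of the many interchanges of integration order: because the balancing links $t$ and the $t_r$, there need not be any choice of parameters for which all contours are simultaneously unit circles, so Fubini must be validated through Lemma~\ref{lemmero}, checking that every closed path and every half-edge-to-half-edge path in the associated graph carries a label product outside $p^{\mathbb{Z}_{\leq 0}}q^{\mathbb{Z}_{\leq 0}}$ --- a check made delicate precisely by the half-integer powers of $t$ and the factors $\pm\sqrt{pt}$ introduced by the doubling.
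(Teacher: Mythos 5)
Your skeleton --- induction on $n$ with Rains' univariate identity as base case, the doubling $\tilde\Gamma(tz^{\pm2})=\Gamma(\pm\sqrt t\,z^{\pm1},\pm\sqrt{pt}\,z^{\pm1})$ to expose a ten-parameter Selberg integral satisfying the enabler's balancing, and the induction enabler to peel off a dimension --- matches the paper's proof, which runs essentially the same chain in the opposite direction (from the $(p,q^2)$ side toward the $(p,q)$ side). Your computation that the descended diagonal parameter is $qt^{1-n}$ rather than $t$ correctly locates the crux. But neither of your two proposed resolutions of that crux works as stated. An ``intervening Selberg transformation multiplying the four diagonal-origin parameters by $t^{n/2}q^{-1/2}$'' is not available: after the enabler the inner $z$-integral carries ten parameters (the eight ordinary ones plus the pair $\sqrt{pq/t}\,y^{\pm1}$ from the coupling to $y$), and the $W(E_7)$ transformation \eqref{eqtrafo2} applies only to eight; the almost-Selberg transformation, Proposition \ref{propas}, does achieve exactly this rescaling, but only at the price of introducing a further coupled univariate integral, which your narrative suppresses --- and managing that extra integral is where most of the remaining work lies. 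The alternative of inducting on a family with the diagonal parameter $\tau$ decoupled from $t$ is almost certainly not a true statement: the identity is quadratic precisely because $\tilde\Gamma(tz^{\pm2})$ merges with the cross terms $\Gamma(tz_j^{\pm1}z_k^{\pm1})=\tilde\Gamma(tz_jz_k,qtz_jz_k)$ of $\Delta_{I\!I}^{(n)}(t;z)$ to form $\tilde\Delta_{I\!I}^{(n)}(t^2;z)$, and for $\tau\neq t$ there is no reason to expect a $(p,q^2)$-integral on the other side.

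The paper's actual resolution is correspondingly more involved: it uses Lemma \ref{lem52} (the doubly unfolded almost-Selberg transformation) with the $u$-parameters specialized to $\pm\sqrt t,\pm\sqrt{pt}$, introduces an auxiliary univariate $w$-integral by a Dixon transformation, applies the $W(E_7)$ transformation to that $w$-integral (where it legitimately has eight parameters), and --- crucially --- applies the induction hypothesis \emph{twice} per inductive step: once in dimension $n-1$ to the $z$-integral and once in dimension $1$ to the coupling $y$-integral, which is how the univariate integral joining the two pieces gets flipped from one base to the other. Your proposal invokes the hypothesis only once and never addresses the base change of the coupling variable. Finally, your Fubini worry is misplaced for this theorem: the paper exhibits an explicit open set of parameters ($p$ small, $t_r=p^{1/4}q^{1/2}t^{-n/2+1/4}$, $t=q^{3/(4n)}$) on which all contours are unit circles, so Lemma \ref{lemmero} is not needed here; it is the second quadratic transformation, Theorem \ref{thm71}, for which no such choice exists.
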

Note that both sides can be expressed as Selberg integrals with 10 parameters (specialized in the right way). 
\begin{proof}
For completeness purposes we recall the proof of the univariate case from \cite[Proposition 5.6]{RainsLittlewood}.
Indeed in that case we consider the double integral
\[
\iint \Delta_I^{(1)}(z) \tilde \Delta_I^{(1)}(y) \Gamma(\sqrt{\frac{t}{q}} y^{\pm 1} z^{\pm 1}) 
\tilde \Gamma( \sqrt{\frac{pq^2}{t^2}} v^{\pm 1} y^{\pm 1}) \prod_{r=1}^4 \Gamma(t_r z^{\pm 1})
\]
and evaluate it either to $y$ or to $z$, which leads to the desired equation.

The more general case can now be obtained using induction and the following calculation
\begin{align*}
\int & \tilde \Delta_{II}^{(n)}(t^2;z) \prod_{r=1}^4 \Gamma( \sqrt{\frac{t}{q}} t_r z^{\pm 1}) \tilde \Gamma( \sqrt{\frac{pq^2}{t^2}} v^{\pm 1} z^{\pm 1}) 
\\ &\stackrel{{\color{blue} \boxtimes}:z}{=}
\tilde \Gamma(t^{2n})  \prod_{k=1}^{n-1} \prod_{1\leq r<s\leq 4} \Gamma( \frac{t^{2k-1} }{q} t_rt_s, t^{2k-1} t_rt_s) \prod_{r=1}^4 \tilde \Gamma(t^{2k-1}t_r^2)
\\ & \qquad \times 
\iint \tilde \Delta_{II}^{(n-1)}(t^2;z)\tilde \Delta_I^{(1)}(y) \tilde \Gamma( \sqrt{\frac{pq^2}{t^2}} y^{\pm 1} z^{\pm 1}, \sqrt{\frac{pq^2}{t^{2n}}} v^{\pm 1} y^{\pm 1}) 
\prod_{r=1}^4 \Gamma( \frac{t_r t^{n-\frac12}}{\sqrt{q}} y^{\pm 1}, \frac{\sqrt{pq}}{t_r t^{n-\frac32}} z^{\pm 1})
\\ & \stackrel{IH:z}{=} 
\tilde \Gamma(t^{2n})  \prod_{k=1}^{n-1} \prod_{1\leq r<s\leq 4} \Gamma( \frac{t^{2k-1} }{q} t_rt_s) \prod_{r=1}^4 \tilde \Gamma(t^{2k-1}t_r^2)
\\ & \qquad \times 
\iint \Delta_{II}^{(n-1)}(t;z)\tilde \Delta_I^{(1)}(y) \Gamma( \sqrt{\frac{pq}{t}} y^{\pm 1} z^{\pm 1}) \tilde \Gamma(\sqrt{\frac{pq^2}{t^{2n}}} v^{\pm 1} y^{\pm 1}) 
\prod_{r=1}^4 \Gamma( \frac{t_r t^{n-\frac12}}{\sqrt{q}} y^{\pm 1}, \frac{\sqrt{p}q}{t_r t^{n-1}} z^{\pm 1})
\tilde \Gamma(t z^{\pm 2})
\\ & \stackrel{{\color{blue} \boxtimes}:z}{=} 
\tilde \Gamma(t^{2n})  \Gamma(t^{n-1}) \prod_{k=1}^{n-1} \prod_{1\leq r<s\leq 4} \Gamma( \frac{t^{2k-1} }{q} t_rt_s) \prod_{r=1}^4 \tilde \Gamma(t^{2k-1}t_r^2)
\\ & \qquad \times \prod_{k=1}^{n-2} \prod_{1\leq r<s\leq 4}  \Gamma( \frac{pq^2}{t_rt_s t^{2n-k-1}}) 
\prod_{r=1}^4 \tilde \Gamma( \frac{pq^2}{t_r^2 t^{2n-2k-1}})
\frac{\tilde \Gamma( t^{2k}, p t^{2k})}{\Gamma(t^{2k},pt^{2k})}
\\ & \qquad \times 
\iiint  \Delta_{II}^{(n-2)}(t;z)\tilde \Delta_I^{(1)}(y)\Delta_I^{(1)}(x) 
\Gamma( \sqrt{\frac{pq}{t}} x^{\pm 1} z^{\pm 1}, \sqrt{\frac{pq}{t^{n-1}}} y^{\pm 1}x^{\pm 1}) \tilde \Gamma(\sqrt{\frac{pq^2}{t^{2n}}} v^{\pm 1} y^{\pm 1}) 
\\ & \qquad \times \prod_{r=1}^4 \Gamma( \frac{t_r t^{n-\frac12}}{\sqrt{q}} y^{\pm 1}, 
\frac{\sqrt{p}q}{t_r t^{\frac12n}} x^{\pm 1}, \frac{t_r t^{\frac12 n+\frac12}  }{\sqrt{q} } z^{\pm 1})
\tilde \Gamma(t^{n-1} x^{\pm 2}, \frac{q}{t^{n-2}} z^{\pm 2})
\\ & \stackrel{w}{=} \tilde \Gamma(t^{2n})  \frac{\Gamma(t^{n-1})}{\Gamma(  \frac{t^{2n-1}}{q})} \prod_{k=1}^{n-1} \prod_{1\leq r<s\leq 4} \Gamma( \frac{t^{2k-1} }{q} t_rt_s) \prod_{r=1}^4 \tilde \Gamma(t^{2k-1}t_r^2)
\\ & \qquad \times \prod_{k=1}^{n-2} \prod_{1\leq r<s\leq 4}  \Gamma( \frac{pq^2}{t_rt_s t^{2n-k-1}}) 
\prod_{r=1}^4 \tilde \Gamma( \frac{pq^2}{t_r^2 t^{2n-2k-1}})
\frac{\tilde \Gamma( t^{2k}, p t^{2k})}{\Gamma(t^{k},pt^{k})}
 \frac{1}{\prod_{1\leq r<s\leq 4} \Gamma(t_rt_s)}
\\ & \qquad \times 
\iiiint  \Delta_{II}^{(n-2)}(t;z)\tilde \Delta_I^{(1)}(y)\Delta_I^{(1)}(x) \Delta_I^{(1)}(w)
\Gamma( \sqrt{\frac{pq}{t}} x^{\pm 1} z^{\pm 1}, \sqrt{\frac{pq}{t^{n-1}}} y^{\pm 1}x^{\pm 1},\frac{t^{n-\frac12}}{\sqrt{q}} w^{\pm 1} y^{\pm 1}) 
\\ & \qquad \times \tilde \Gamma(\sqrt{\frac{pq^2}{t^{2n}}} v^{\pm 1} y^{\pm 1})  \prod_{r=1}^4 \Gamma( t_r w^{\pm 1},  \frac{\sqrt{p}q}{t_r t^{\frac12n}} x^{\pm 1}, \frac{t_r t^{\frac12 n+\frac12}  }{\sqrt{q} } z^{\pm 1})
\tilde \Gamma(t^{n-1} x^{\pm 2}, \frac{q}{t^{n-2}} z^{\pm 2}).
\\ &\stackrel{IH:y}{=}
 \Gamma(t^{n})   \prod_{k=1}^{n-1} \prod_{1\leq r<s\leq 4}\Gamma( \frac{t^{2k-1} }{q} t_rt_s) \prod_{r=1}^4 \tilde \Gamma(t^{2k-1}t_r^2)
\\& \qquad \times 
\prod_{k=1}^{n-2} \prod_{1\leq r<s\leq 4}  \Gamma( \frac{pq^2}{t_rt_s t^{2n-k-1}}) 
\prod_{r=1}^4 \tilde \Gamma( \frac{pq^2}{t_r^2 t^{2n-2k-1}})
\frac{\tilde \Gamma( t^{2k}, p t^{2k})}{\Gamma(t^k,pt^k)}
\frac{1}{\prod_{1\leq r<s\leq 4} \Gamma(t_rt_s)}
\\ & \qquad \times \iiiint \Delta_{II}^{(n-2)}(t;z) \Delta_I^{(1)}(y)\Delta_I^{(1)}(x) \Delta_I^{(1)}(w)
\Gamma( \sqrt{\frac{pq}{t}} x^{\pm 1} z^{\pm 1}, \sqrt{\frac{pq^2}{t^{2n-1}}} y^{\pm 1}x^{\pm 1},t^{\frac12n-\frac12} w^{\pm 1} y^{\pm 1}) 
\\ & \qquad \times
\Gamma(\sqrt{p} t^{\frac12 n}  x^{\pm 1} w^{\pm 1},\sqrt{\frac{pq}{t^{n}}} v^{\pm 1} y^{\pm 1}) 
 \prod_{r=1}^4 \Gamma( t_r w^{\pm 1},  \frac{\sqrt{p}q}{t_r t^{\frac12n}} x^{\pm 1}, \frac{t_r t^{\frac12 n+\frac12}  }{\sqrt{q} } z^{\pm 1})
\tilde \Gamma(t^{n-1} x^{\pm 2}, \frac{q}{t^{n-2}} z^{\pm 2}, t^n y^{\pm 2})
\end{align*}
Continuing as 
\begin{align*}
\\ & \stackrel{W(E_7):w}{=}
 \Gamma(t^{n})   \Gamma(t^{n-1}, p t^n) \prod_{k=1}^{n-1} \prod_{1\leq r<s\leq 4}\Gamma( \frac{t^{2k-1} }{q} t_rt_s) \prod_{r=1}^4 \tilde \Gamma(t^{2k-1}t_r^2)
\\ & \quad \times \prod_{k=1}^{n-2} \prod_{1\leq r<s\leq 4}  \Gamma( \frac{pq^2}{t_rt_s t^{2n-k-1}}) 
\prod_{r=1}^4 \tilde \Gamma( \frac{pq^2}{t_r^2 t^{2n-2k-1}})
\frac{\tilde \Gamma( t^{2k}, p t^{2k})}{\Gamma(t^k,pt^k)}
\\ & \quad \times 
\iiiint \Delta_{II}^{(n-2)}(t;z) \Delta_I^{(1)}(y)\Delta_I^{(1)}(x) \Delta_I^{(1)}(w)
\Gamma( \sqrt{\frac{pq}{t}} x^{\pm 1} z^{\pm 1}, 
\sqrt{ \frac{q}{ t^{n}}} w^{\pm 1} y^{\pm 1},\sqrt{\frac{pq}{t^{n}}} v^{\pm 1} y^{\pm 1},
\sqrt{\frac{p  q }{t^{n-1}}} x^{\pm 1} w^{\pm 1}) 
\\ & \quad \times \prod_{r=1}^4 \Gamma(\sqrt{\frac{t^{2n-1}}{q}} t_r w^{\pm 1},  \frac{\sqrt{p}q}{t_r t^{\frac12n}} x^{\pm 1}, \frac{t_r t^{\frac12 n+\frac12}  }{\sqrt{q} } z^{\pm 1})
\tilde \Gamma(t^{n-1} x^{\pm 2}, \frac{q}{t^{n-2}} z^{\pm 2}, t^n y^{\pm 2})
\end{align*}
Applying Lemma \ref{lem52} to this final expression gives us the desired left hand side. The $u$-parameters in Lemma \ref{lem52} should be specialized as 
$\pm \sqrt{t}$ and $\pm \sqrt{pt}$, while the $t_r$'s are just the $t_r$'s.

As for applications of Fubini, it is quite easy to find a non-empty open set of parameters for which the contours can always be chosen to be unit circles. Indeed the conditions that we can take a unit circle contour are saying that the parameters inside the integrals must be strictly less than 1, so it surely defines an open set, and taking $p$ very small, $t_r= p^{\frac14}q^{\frac12} t^{-\frac12 n+\frac14}$, and $t=q^{\frac{3}{4n}}$ we see that these conditions are always satisfied.
\end{proof}

\section{The second quadratic transformation}
The second quadratic transformation is a transformation relating a ($p,q$)-elliptic hypergeometric integral with an $(\sqrt{p},\sqrt{q})$-elliptic hypergeometric integral. The theorem given is Conjecture Q7 from \cite{RainsLittlewood} in the case $\lambda=0$. As a  corollary we also obtain the $\lambda=0$ version of Conjecture Q1 in loc. cit. 

Recall our notation of $\hat \Gamma$ for $\Gamma_{\sqrt{p},\sqrt{q}}$. The relevant quadratic transformations for gamma functions are 
\[
\Gamma(x) = \hat \Gamma( \pm \sqrt{x}), \qquad 
\hat \Gamma(x) = \Gamma(x,\sqrt{q} x,\sqrt{p} x, \sqrt{pq} x).
\]

\begin{thm}\label{thm71}
Under the balancing conditions $t^{2n-1}t_1t_2t_3t_4=pq $  we have
\begin{multline}\label{eqthm71}
\int \Delta_{I\!I}^{(n)}(t;z) 
\prod_{r=1}^4 \Gamma(t_r z^{\pm 1})  \Gamma(- \sqrt{\frac{pq}{t}}  v^{\pm 2} z^{\pm 1}) \hat \Gamma(\sqrt{t} z^{\pm 1}) 
 \\ =
\prod_{i=0}^{n-1} \prod_{1\leq r<s\leq 4} \Gamma(t^{2i} t_rt_s)
\int \hat \Delta_{I\!I}^{(n)}(t;z)
 \prod_{r=1}^4 \Gamma(-t_r \sqrt{t} z^{\pm 2})  \hat \Gamma(\sqrt{\frac{\sqrt{pq}}{t}} v^{\pm 1} z^{\pm 1}) 
\end{multline}
\end{thm}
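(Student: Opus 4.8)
The plan is to follow the proof of Theorem~\ref{thmq5} essentially line for line, replacing the doubling $(p,q)\leftrightarrow(p,q^2)$ by the halving $(p,q)\leftrightarrow(\sqrt p,\sqrt q)$ and using the two relations $\Gamma(x)=\hat\Gamma(\pm\sqrt x)$ and $\hat\Gamma(x)=\Gamma(x,\sqrt q x,\sqrt p x,\sqrt{pq}x)$ in place of the corresponding relations for $\tilde\Gamma$. The univariate case is already due to Rains (Conjecture Q7 with $\lambda=0$, $n=1$); as with Theorem~\ref{thmq5} it is obtained by writing down a coupled double integral of a one-dimensional $(p,q)$-integral in $z$ and a one-dimensional $(\sqrt p,\sqrt q)$-integral in $y$, tied together by a single cross term, and evaluating it in both orders by the elliptic beta integral. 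Integrating out $z$ (over $(p,q)$) produces the $\hat\Delta_{I\!I}^{(1)}$ side together with the factor $\prod_{r<s}\Gamma(t_rt_s)$, while integrating out $y$ (over $(\sqrt p,\sqrt q)$) produces the $\Delta_{I\!I}^{(1)}$ side; the two relations above are exactly what convert the cross term and the $v$-terms between the two moduli.

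For the inductive step I would run the same string of transformations as in Theorem~\ref{thmq5}. Starting from the right-hand side $\int\hat\Delta_{I\!I}^{(n)}(t;z)\prod_{r=1}^4\Gamma(-t_r\sqrt t\,z^{\pm2})\,\hat\Gamma(\sqrt{\sqrt{pq}/t}\,v^{\pm1}z^{\pm1})$, first apply the induction enabler $\stackrel{\boxtimes:z}{=}$ of Proposition~\ref{propbluebox} (now for the pair $(\sqrt p,\sqrt q)$) to split off a univariate $y$-integral and lower the $z$-integral to dimension $n-1$; then apply the induction hypothesis $\stackrel{IH:z}{=}$ to the resulting $(n-1)$-dimensional $\hat\Delta_{I\!I}$-integral, turning it back into a $\Delta_{I\!I}^{(n-1)}$-integral and regenerating the squared argument; apply the induction enabler a second time, introduce a new variable $w$ by a Dixon transformation $\stackrel{w}{=}$, use the induction hypothesis once more ($\stackrel{IH:y}{=}$), and tidy up the $w$-integral by a Selberg transformation $\stackrel{W(E_7):w}{=}$. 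At that point the integrand should coincide with the right-hand side of Lemma~\ref{lem52}, with the $u_r$ specialized to $\pm\sqrt t$ and $\pm\sqrt{pt}$ exactly as in Theorem~\ref{thmq5} and the $t_r$ kept as the $t_r$, so that one application of that lemma collapses everything onto the desired left-hand side $\int\Delta_{I\!I}^{(n)}(t;z)\prod_{r=1}^4\Gamma(t_rz^{\pm1})\,\Gamma(-\sqrt{pq/t}\,v^{\pm2}z^{\pm1})\,\hat\Gamma(\sqrt t\,z^{\pm1})$. Throughout, the balancing condition $t^{2n-1}t_1t_2t_3t_4=pq$ is what makes the shifted parameters close up at each use of the induction hypothesis and of $W(E_7)$.

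The Fubini bookkeeping should again be the easy kind. As in Theorem~\ref{thmq5}, the conditions permitting unit-circle contours only require the parameters occurring inside each integral to have modulus below $1$, which defines a non-empty open set; taking $p$ small, $t=q^{3/(4n)}$ and $t_r=(pq)^{1/4}t^{-n/2+1/4}$ forces every such parameter (and every power of $t$ times it) to be small, so all contours may be taken to be unit circles there, and analytic continuation extends the identity to all admissible parameters.

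The step I expect to be delicate is not any individual transformation but the sign- and prefactor-bookkeeping forced by the relation $\Gamma(x)=\hat\Gamma(\pm\sqrt x)$, together with reconciling the $v^{\pm2}$-argument on the left with the $v^{\pm1}$-argument on the right. The minus signs in $\Gamma(-t_r\sqrt t\,z^{\pm2})$ and in $\Gamma(-\sqrt{pq/t}\,v^{\pm2}z^{\pm1})$ are presumably engineered so that, after splitting by the square-root relation, the two sign branches reassemble into the single $\hat\Gamma$ demanded by the next move; verifying this at each step, and confirming that the spurious factors generated by the induction enabler, the Dixon step and the Selberg transformation (of the kind already visible in the proof of Theorem~\ref{thmq5}) cancel to leave precisely $\prod_{i=0}^{n-1}\prod_{1\leq r<s\leq4}\Gamma(t^{2i}t_rt_s)$, is the main computational hurdle. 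One must also check, as in Section~\ref{secFubini}, that the specialization of Lemma~\ref{lem52} avoids the $0/0$ loci coming from the path products in the associated graph.
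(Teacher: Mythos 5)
Your chain of transformations (induction enabler, induction hypothesis, second induction enabler, Dixon step in $w$, induction hypothesis in $y$, $W(E_7)$ in $w$, then Lemma \ref{lem52}) is indeed the skeleton of the paper's calculation, but your justification of Fubini is where the proposal genuinely fails. You assert that, as in Theorem \ref{thmq5}, one can find a non-empty open set of parameters on which all contours are unit circles. For this theorem that is impossible: the intermediate integrands contain, simultaneously, factors such as $\hat\Gamma(-t^{1-\frac12 n}z^{\pm 1})$, $\Gamma(-t^{-\frac12 n}w^{\pm 1}y^{\pm 1})$ and $\hat\Gamma(t^{\frac12 n}y^{\pm 1})$, so unit circles would require both $|t^{n/2}|<1$ and $|t^{-n/2}|<1$. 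Worse, even the refined criterion of Lemma \ref{lemmero} fails: for instance the path product $t^{1-\frac12 n}\cdot t^{n-2}\cdot t^{1-\frac12 n}=1$ lies in $p^{\mathbb{Z}_{\leq 0}}q^{\mathbb{Z}_{\leq 0}}$, so the specialization you need lands on the zero divisor of the prefactor, and a chain of identities between meromorphic functions cannot simply be specialized there --- this is exactly the $0/0$ phenomenon of the counterexample in Section \ref{secFubini}, where changing the order of integration changes the value.

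The paper's way around this is a detour your proposal does not contain: it proves several intermediate identities with extra free parameters $u_r$, $b_r$ (a doubly-generalized version of the passage from \eqref{eqVII} to \eqref{eqbbb}, then Lemma \ref{lem73}, Lemma \ref{lem74}, and a double iterate of the induction enabler), for each of which Fubini is checkable, and only specializes $u_r\to\sqrt{t},\sqrt{qt},\sqrt{pt},\sqrt{pqt}$ at the two ends where that specialization is legitimate; in particular two induction enablers must be performed at once because a single one would evaluate an integral at an apparent pole. Two smaller corrections: the $u_r$ of Lemma \ref{lem52} must be specialized to $\sqrt{t},\sqrt{qt},\sqrt{pt},\sqrt{pqt}$ (matching $\hat\Gamma(x)=\Gamma(x,\sqrt{q}x,\sqrt{p}x,\sqrt{pq}x)$) with $v$ replaced by $-v^2$, not to $\pm\sqrt{t},\pm\sqrt{pt}$ as in Theorem \ref{thmq5}; and the calculation requires explicit substitutions $z\to -z$ mid-way and $z\to-z$, $x\to-x$, $w\to-w$ before invoking Lemma \ref{lem52}, which your general remark about sign bookkeeping gestures at but does not supply.
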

Again this is an equation between two Selberg integrals with 10 parameters.
\begin{proof}
The proof is very similar to the proof of Theorem \ref{thmq5}. As in that proof the univariate case was already proven by Rains as \cite[Proposition 5.12]{RainsLittlewood}. Once again we repeat the proof for completeness. Indeed it follows from evaluating the double integral
\[
\iint \Delta_{I}^{(1)}(z) \hat \Delta_I^{(1)}(y) \Gamma(-\sqrt{t} y^{\pm 2} z^{\pm 1})
\prod_{r=1}^4 \Gamma(t_r z^{\pm 1}) \hat \Gamma( \sqrt{\frac{\sqrt{pq}}{t}} v^{\pm 1} y^{\pm 1})
\]
to $y$ and $z$.

For general $n$ we now use induction and the following calculation. 
We start with the right hand side and get 
\begin{align}
\prod_{i=0}^{n-1} & \prod_{1\leq r<s\leq 4} \Gamma(t^{2i} t_rt_s)
\int \hat \Delta_{I\!I}^{(n)}(t;z)
 \prod_{r=1}^4 \Gamma(-t_r \sqrt{t} z^{\pm 2}) \prod_{r=1}^2 \hat \Gamma(\sqrt{\frac{\sqrt{pq}}{t}} v^{\pm 1} z^{\pm 1}) 
\nonumber\\ & \stackrel{{ \color{blue} \boxtimes}:z}{=} 
\hat \Gamma(t^n)  \prod_{k=1}^{n-1} \prod_{1\leq r<s\leq 4} \Gamma(t^{2k-1} t_rt_s) \prod_{r=1}^4  \hat \Gamma(t^{k-\frac12}t_r)
 \prod_{1\leq r<s\leq 4} \Gamma(t_rt_s)
\nonumber \\ & \qquad  \times \iint \hat \Delta_{I\!I}^{(n-1)}(t;z) \hat \Delta_I^{(1)}(y)
 \hat \Gamma(\left(\frac{pq}{t^2}\right)^{\frac14} y^{\pm 1} z^{\pm 1}) 
\prod_{r=1}^4 \Gamma(-\frac{\sqrt{pq}}{t_r t^{n-\frac32}} z^{\pm 2},-t_r t^{n-\frac12} y^{\pm 2})  
\hat \Gamma(\left(\frac{pq}{t^{2n}}\right)^{\frac14} v^{\pm 1} y^{\pm 1}) 
\nonumber \\ & \stackrel{IH:z}{=}
\hat \Gamma(t^n)  \prod_{k=1}^{n-1} \prod_{r=1}^4  \hat \Gamma(t^{k-\frac12}t_r)
\prod_{1\leq r<s\leq 4} \Gamma(t_rt_s)
\iint \Delta_{I\!I}^{(n-1)}(t;z) \hat \Delta_I^{(1)}(y)
\nonumber \\ & \qquad  \times 
  \Gamma(-\sqrt{\frac{pq}{t}}  y^{\pm 2} z^{\pm 1}) 
\prod_{r=1}^4 \Gamma(\frac{\sqrt{pq}}{t_r t^{n-1}} z^{\pm 1},-t_r t^{n-\frac12} y^{\pm 2})  
\hat \Gamma(\left(\frac{pq}{t^{2n}}\right)^{\frac14} v^{\pm 1} y^{\pm 1}) \hat \Gamma(\sqrt{t} z^{\pm 1})
\label{eqIII} \\ & \stackrel{z\to -z}{=} 
\hat \Gamma(t^n)  \prod_{k=1}^{n-1} \prod_{r=1}^4  \hat \Gamma(t^{k-\frac12}t_r)
\prod_{1\leq r<s\leq 4} \Gamma(t_rt_s)
\iint \Delta_{I\!I}^{(n-1)}(t;z) \hat \Delta_I^{(1)}(y)
\nonumber \\ & \qquad \times 
  \Gamma(\sqrt{\frac{pq}{t}}  y^{\pm 2} z^{\pm 1}) 
\prod_{r=1}^4 \Gamma(-\frac{\sqrt{pq}}{t_r t^{n-1}} z^{\pm 1},-t_r t^{n-\frac12} y^{\pm 2})  
\hat \Gamma(\left(\frac{pq}{t^{2n}}\right)^{\frac14} v^{\pm 1} y^{\pm 1}) \hat \Gamma(-\sqrt{t} z^{\pm 1})
\nonumber
\\  & \stackrel{{\color{blue} \boxtimes}:z}{=}
\hat \Gamma(t^n)  \Gamma(t^{n-1}) 
\prod_{r=1}^4 \hat \Gamma( t^{\frac12} t_r)
\prod_{1\leq r<s\leq 4} \Gamma(t_rt_s) 
 \prod_{k=1}^{n-2} \prod_{1\leq r<s\leq 4} \Gamma(t^k t_rt_s) \frac{\hat \Gamma(t^k, \sqrt{pq} t^k)}{\Gamma(t^k,pqt^k)}
\nonumber \\ & \qquad \times 
\iiint  \Delta_{I\!I}^{(n-2)}(t;z) \hat \Delta_I^{(1)}(y) \Delta_{I}^{(1)}(x)
 \Gamma(\sqrt{\frac{pq}{t}}  x^{\pm 1} z^{\pm 1}) 
\prod_{r=1}^4 \Gamma(-t_r t^{\frac12 n+\frac12}  z^{\pm 1},-\frac{\sqrt{pq}}{t_r t^{\frac12 n}} x^{\pm 1} ,-t_r t^{n-\frac12} y^{\pm 2})  
\nonumber \\ & \qquad \times 
\hat \Gamma(\left(\frac{pq}{t^{2n}}\right)^{\frac14} v^{\pm 1} y^{\pm 1}) 
\Gamma(\sqrt{\frac{pq}{t^{n-1}}} x^{\pm 1} y^{\pm 2})
\hat \Gamma( - t^{1-\frac12n}z^{\pm 1},- t^{\frac12n-\frac12} x^{\pm 1})
\nonumber \\ & \stackrel{w}{=}
\hat \Gamma(t^n)  \frac{\Gamma(t^{n-1})}{\Gamma(t^{2n-1})} 
\prod_{r=1}^4 \hat \Gamma(t^{\frac12} t_r)
 \prod_{k=1}^{n-2} \prod_{1\leq r<s\leq 4} \Gamma(t^kt_rt_s ) \frac{\hat \Gamma(t^k, \sqrt{pq}t^k)}{\Gamma(t^k,pqt^k)}
\nonumber \\ & \qquad \times 
\iiiint  \Delta_{I\!I}^{(n-2)}(t;z) \hat \Delta_I^{(1)}(y) \Delta_{I}^{(1)}(x) \Delta_{I}^{(1)}(w) 
 \Gamma(\sqrt{\frac{pq}{t}}  x^{\pm 1} z^{\pm 1},t^{n-\frac12} w^{\pm 1} y^{\pm 2},\sqrt{\frac{pq}{t^{n-1}}} x^{\pm 1} y^{\pm 2})
\nonumber \\ & \qquad \times 
 \hat \Gamma(\left(\frac{pq}{t^{2n}}\right)^{\frac14} v^{\pm 1} y^{\pm 1}) 
\prod_{r=1}^4 \Gamma(-t_r t^{\frac12 n+\frac12}  z^{\pm 1},-\frac{\sqrt{pq}}{t_r t^{\frac12 n}} x^{\pm 1}
,-t_r w^{\pm 1})  
\hat \Gamma( - t^{1-\frac12n}z^{\pm 1}, -t^{\frac12n-\frac12} x^{\pm 1})
\nonumber \\& \stackrel{IH:y}{=}
\Gamma(t^n)  \prod_{r=1}^4 \hat \Gamma(t^{\frac12} t_r) \prod_{k=1}^{n-2} \prod_{1\leq r<s\leq 4} \Gamma(t^kt_rt_s)  
\frac{\hat \Gamma(t^k,\sqrt{pq}t^k)}{\Gamma(t^k,pqt^k)}
\nonumber \\ & \qquad \times 
\iiiint  \Delta_{I\!I}^{(n-2)}(t;z) \Delta_I^{(1)}(y) \Delta_{I}^{(1)}(x) \Delta_{I}^{(1)}(w) 
\nonumber \\ & \qquad \times 
 \Gamma(\sqrt{\frac{pq}{t}}  x^{\pm 1} z^{\pm 1},  \sqrt{pq t^n} x^{\pm 1} w^{\pm 1},-t^{\frac12 n-\frac12} w^{\pm 1} y^{\pm 1},-\sqrt{\frac{pq}{t^{2n-1}}}x^{\pm 1} y^{\pm 1})
\label{eqVII} \\ & \qquad \times \prod_{r=1}^4 \Gamma(-t_r t^{\frac12 n+\frac12}  z^{\pm 1},-\frac{\sqrt{pq}}{t_r t^{\frac12 n}} x^{\pm 1}
,-t_r w^{\pm 1})  
\Gamma(-\sqrt{\frac{pq}{t^n}} v^{\pm 2} y^{\pm 1}) 
\hat \Gamma(-  t^{1-\frac12n}z^{\pm 1}, -t^{\frac12n-\frac12} x^{\pm 1}, t^{\frac12n} y^{\pm 1})
\nonumber 
\end{align}
Continuing as 
\begin{align}
 & \stackrel{W(E_7):w}{=}
\Gamma(t^n, pq t^n, t^{n-1})  
\prod_{r=1}^4 \hat \Gamma(t^{\frac12} t_r) \prod_{k=0}^{n-2}\prod_{1\leq r<s\leq 4} \Gamma(t^kt_rt_s)
\prod_{k=1}^{n-2} \frac{\hat \Gamma(t^k,\sqrt{pq}t^k)}{\Gamma(t^k,pqt^k)}
\nonumber \\ & \qquad \times 
\iiiint  \Delta_{I\!I}^{(n-2)}(t;z) \Delta_I^{(1)}(y) \Delta_{I}^{(1)}(x) \Delta_{I}^{(1)}(w) 
 \Gamma(\sqrt{\frac{pq}{t}}  x^{\pm 1} z^{\pm 1},  \sqrt{\frac{pq}{t^{n-1}}} x^{\pm 1} w^{\pm 1},- t^{-\frac12 n} w^{\pm 1} y^{\pm 1})
\nonumber \\ & \qquad \times \prod_{r=1}^4 \Gamma(-t_r t^{\frac12 n+\frac12}  z^{\pm 1},-\frac{\sqrt{pq}}{t_r t^{\frac12 n}} x^{\pm 1}
,-t_rt^{n-\frac12} w^{\pm 1})  
\Gamma(-\sqrt{\frac{pq}{t^n}} v^{\pm 2} y^{\pm 1}) 
\hat \Gamma( - t^{1-\frac12n}z^{\pm 1},- t^{\frac12n-\frac12} x^{\pm 1}, t^{\frac12n} y^{\pm 1})
\nonumber 
\end{align}
After replacing $z\to -z$, $x\to -x$, $w\to -w$ we can apply Lemma \ref{lem52} to equate the final expression to the desired left hand side. The $v$ in Lemma \ref{lem52} should be replaced by $-v^2$, the $u$-parameters should specialized to $\sqrt{t}$, $\sqrt{qt}$, $\sqrt{pt}$ and $\sqrt{pqt}$, while the $t_r$ remain just $t_r$.

There is no set of parameters for which we can choose unit circle contours everywhere (for example in this last expression we would need both $|t^{-\frac12n}|<1$ and $|t^{\frac12 n}|<1$), thus we need to consider the prefactor in Lemma \ref{lemmero}. The labels of the edges and half-edges only have positive powers of $pq$, so whenever we use an edge with a strictly positive power of $pq$ the product will never be an element of $p^{\mathbb{Z}_{\leq 0}}q^{\mathbb{Z}_{\leq 0}}$ (and for the rest we only need to worry that the product over a path equals 1). If we ignore the labels with a strictly positive power of $pq$, we see that $t_r$ also only occurs in positive powers, so we can ignore labels with a $t_r$ as well. What is left are labels which are powers of $t$ (possibly with a sign). Up till the second application of the induction enabler transformation, all powers of $t$ are positive and we have no problems. After that however we have a term $\Delta_{I\!I}^{(n-2)}(t;z) \hat \Gamma(-t^{1-\frac12n}z^{\pm 1})$, which is problematic, as $t^{1-\frac12n}\cdot t^{n-2} \cdot t^{1-\frac12n}=1$ corresponds to a path of length $n$ starting and ending at this half-edge and moving between the vertices associated to the different $z_i$'s in the graph. In the final equation we have another problem with the terms coming from 
$\Gamma(t^{-\frac12 n} w^{\pm 1} y^{\pm 1}, t^{\frac12n} y^{\pm 1})$. In order to resolve these issues we have to use a trick. 

The trick is do parts of the calculation for more general parameters. The less explicit the coefficients of the elliptic gamma functions, the less chance there is that we would seem to be at an apparent pole of the integral. We can thus prove lemma's showing the equality of two integrals for more general parameters, and as long as the specialization of these two integrals is valid, we can equate them for these special variables. In particular the specializations of the intermediate steps do not have to be valid, that is, we might be specializing at apparent poles. It turns out to be a non-trivial exercise to find the right intermediate lemma's and in particular we have to make an, otherwise unnecessary, detour. First we prove a doubly-generalized version of the equality between \eqref{eqVII} and \eqref{eqbbb} (the latter arising in the proof of Lemma \ref{lem52}). Note that we can specialize both ends at $b_r\to \sqrt{t}, \sqrt{pt}, \sqrt{qt}, \sqrt{pqt}$, while this is not valid for the intermediate steps. We also want to specialize $u_r\to \sqrt{t}, \sqrt{pt},\sqrt{qt},\sqrt{pqt}$, but that is impossible for these two integrals.
\begin{lemma}
Under the balancing conditions $t^{n-\frac32}\prod_{r=1}^4 t_r=pqs^2$, $t^{n-\frac32} s^2 \prod_{r=1}^4 u_r=pq$ and $\prod_{r=1}^4 u_r=\prod_{r=1}^4 b_r$ we have
\begin{align*}
\iiiint & \Delta_{I\!I}^{(n-2)}(t;z) \Delta_I^{(1)}(y)\Delta_I^{(1)}(x)\Delta_I^{(1)}(w) \Gamma( \sqrt{\frac{pq}{t}} z^{\pm 1}x^{\pm 1}, st^{\frac14} \sqrt{\frac{pq}{t^{n-1}}} x^{\pm 1}y^{\pm 1}, \frac{\sqrt{pq}}{st^{\frac14}} x^{\pm 1}w^{\pm 1}, t^{\frac12(n-1)} y^{\pm 1}w^{\pm 1}) 
\\ & \times \Gamma(\sqrt{\frac{pq}{t^n}}v^{\pm 1}y^{\pm 1}) 
\prod_{r=1}^4 \Gamma( \frac{t_rt^{\frac14}}{s} z^{\pm 1}, su_rt^{\frac34} z^{\pm 1}, \frac{\sqrt{pq}st^{\frac14}}{t_r}x^{\pm 1}, \frac{\sqrt{pq}}{su_rt^{\frac14}} x^{\pm 1}, t_rw^{\pm 1}, b_rt^{\frac12n-\frac12} y^{\pm 1})
\\ &=
\prod_{1\leq r<s\leq 4} \Gamma(t_rt_s, t^{n-2}u_ru_s)
\prod_{k=1}^{n-2} \prod_{r,s=1}^4 \Gamma( u_rt_s t^{n-k-1})
\iint \Delta_{I\!I}^{(n-1)}(t;z) \Delta_{I}^{(1)}(y) 
\\ & \qquad \times \Gamma( \sqrt{\frac{pq}{t}} y^{\pm 1}z^{\pm 1}, \sqrt{\frac{pq}{t^n}}v^{\pm 1}y^{\pm 1})
\prod_{r=1}^4 \Gamma( \frac{\sqrt{pq}}{u_r t^{\frac12n-1}} z^{\pm 1}, \frac{\sqrt{pq}}{t_r t^{\frac12n-1}} z^{\pm 1}, t_r t^{\frac12(n-1)} y^{\pm 1}, 
b_r t^{\frac12(n-1)} y^{\pm 1})
\end{align*}
\end{lemma}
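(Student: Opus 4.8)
The plan is to prove this identity by the paper's standard device: to realize the four-fold integral on the left and the two-fold integral on the right as the two endpoints of a single chain of Dixon, Selberg, and induction-enabler transformations, reading it as a generalization of the passage between \eqref{eqbbb} and \eqref{eqVII}. Since the first two balancing hypotheses $t^{n-\frac32}\prod_r t_r=pqs^2$ and $t^{n-\frac32}s^2\prod_r u_r=pq$ are exactly the almost-Selberg balancing \eqref{eqdefs}, I expect the backbone to be: apply the almost-Selberg transformation (the induction hypothesis, Proposition \ref{propas} in dimension $n-1$) to the $z$-integral of the right-hand side, taking the $y$-coupling $\sqrt{pq/t}\,y^{\pm1}z^{\pm1}$ in the role of its $v$-term, and then apply the induction enabler (Proposition \ref{propbluebox}) to the resulting $(n-1)$-dimensional Selberg integral. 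The first step introduces the univariate variable $x$ (coupled to both $z$ and $y$); the second replaces $\Delta_{II}^{(n-1)}(t;z)$ by $\Delta_{II}^{(n-2)}(t;z)$ while splitting off the univariate $w$. Since the graph of the left-hand side carries the extra $y$--$w$ edge and $s$-dependent coupling constants that do not line up immediately, I anticipate needing one intervening Dixon transformation, and possibly a single Selberg transformation \eqref{eqtrafo2}, on the univariate pieces to match the $z$--$x$, $x$--$y$, $x$--$w$ and $y$--$w$ cross terms exactly.

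The decisive structural observation, which I would record first, is that the parameters $b_r$ are spectators. In both integrals they occur only as the half-edges $b_r t^{\frac12(n-1)}y^{\pm1}$ attached to the $y$-vertex, and every transformation in the backbone acts on $z$, $x$, $w$ rather than on those half-edges; likewise the coupling $\sqrt{pq/t^n}\,v^{\pm1}y^{\pm1}$ is never touched. Consequently none of the intermediate balancing conditions constrains the individual $b_r$, and the sole requirement is that the univariate $y$-integral stay balanced, which is precisely the third hypothesis $\prod_r b_r=\prod_r u_r$. This is exactly what upgrades the equality $\eqref{eqbbb}=\eqref{eqVII}$ (the case $b_r=u_r$) to the doubly-generalized statement, and it is what will later license the specialization $b_r\to\sqrt{t},\sqrt{pt},\sqrt{qt},\sqrt{pqt}$ at the two endpoints while $u_r$ remains generic. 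I would then grind through the bookkeeping of the elliptic-gamma prefactors generated by each transformation, matching them against $\prod_{r<s}\Gamma(t_rt_s,t^{n-2}u_ru_s)$ and $\prod_{k=1}^{n-2}\prod_{r,s}\Gamma(u_rt_s t^{n-k-1})$; this is lengthy but routine, with the balancing conditions used repeatedly to cancel $\Gamma(z,pq/z)=1$ pairs.

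The step I expect to be the genuine obstacle is not this algebra but the justification of Fubini, and indeed that is the whole point of keeping $u_r$ generic. For the specialized values used in Theorem \ref{thm71} the graph of \eqref{eqVII} contains a problematic path: the half-edge $-t^{1-\frac12 n}z^{\pm1}$ together with the $t$-edges of $\Delta_{II}^{(n-2)}$ gives the label-product $t^{1-\frac12 n}\cdot t^{n-2}\cdot t^{1-\frac12 n}=1\in p^{\mathbb Z_{\le0}}q^{\mathbb Z_{\le0}}$, so Lemma \ref{lemmero} does not license the interchange of integration order and a specialization there would land on an apparent pole. With $u_r$ (and $b_r$) kept independent, however, this half-edge is replaced by the generic labels $\frac{t_r t^{1/4}}{s}z^{\pm1}$ and $s u_r t^{3/4}z^{\pm1}$, and the degeneracy disappears.

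To close the argument I would verify the genericity claim explicitly for the few families of labels that actually occur, which are built from $t$, $\sqrt{pq/t}$, $u_r$, $t_r$ and $s$. Every cross term is either a strictly positive power of $pq$ times a power of $t$, or a strictly positive power of $t$; every half-edge carries one of the free parameters $u_r,t_r,s$ or the power-of-$t$ labels. For generic $u_r,t_r,s$ a path between half-edges cannot collapse to a pure power $t^0$, a path involving a $u_r$ or $t_r$ cannot cancel, and the cross-term contributions keep the total $pq$-exponent strictly positive along any closed path; hence no $w(\lambda)$ or $w(\mu)$ in the prefactor of Lemma \ref{lemmero} lies in $p^{\mathbb Z_{\le0}}q^{\mathbb Z_{\le0}}$. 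The prefactor is therefore non-vanishing, Fubini is valid at every stage, and the two meromorphic functions agree on a non-empty open set of parameters and hence identically, exactly as in the Fubini discussions concluding the previous proofs.
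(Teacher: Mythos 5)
Your proposal follows essentially the same route as the paper: the paper proves the lemma by running the chain $W(E_7)$-transformation in $w$, inverse induction enabler, and the $(n-1)$-dimensional almost-Selberg transformation from the four-fold integral to the two-fold one, which is exactly your backbone read in the opposite direction, and your observations that the $b_r$ enter only as untouched half-edges at the $y$-vertex (so only $\prod_r b_r=\prod_r u_r$ is needed) and that generic $u_r$ is what makes Fubini licit match the paper's reasoning. No substantive difference to report.
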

\begin{proof}
We follow the proof from \eqref{eqVII}, through the proof of Lemma \ref{lem52} to \eqref{eqbbb}. That is, we first apply an $W(E_7)$-transformation in $w$, then apply an (inverse) induction enabler, and then use the $(n-1)$-dimensional version of the almost-Selberg transformation. It is not hard to check that applying Fubini is valid in all integrals.
\end{proof}
The next lemma relates \eqref{eqIII} to another induction enabler applied to \eqref{eqbbb}. Note that we can specialize this equation in $u_r\to \sqrt{t},\sqrt{qt},\sqrt{pt},\sqrt{pqt}$.
\begin{lemma}\label{lem73}
Under the balancing conditions $t^{2n-1}\prod_{r=1}^4 t_r=pq$ and $\prod_{r=1}^4 u_r=t^2pq$ we have
\begin{align*}
\iint & \Delta_{I\!I}^{(n-1)} (t;z) \hat \Delta_I^{(1)}(y) 
\Gamma(-\sqrt{\frac{pq}{t}} y^{\pm 2} z^{\pm 1}) \prod_{r=1}^4 \Gamma( \frac{\sqrt{pq}}{t_rt^{n-1}}z^{\pm 1}, -t_rt^{n-\frac12} y^{\pm 2}) \hat \Gamma( \left( \frac{pq}{t^{2n}}\right)^{\frac14} v^{\pm 1}y^{\pm 1})
\prod_{r=1}^4 \Gamma(\frac{\sqrt{pq}t}{u_r} z^{\pm 1}) 
\\ &= 
\frac{\Gamma(t^n,t^{n-1})}{\hat \Gamma(t^n)} 
\prod_{1\leq r<s\leq 4} \Gamma(u_ru_s) \prod_{k=1}^{n-2} \prod_{r,s=1}^4 \Gamma(\frac{pq}{t_ru_s t^{n-k-1}})
\\& \qquad \times 
\iiint \Delta_{I\!I}^{(n-2)}(t;z) \Delta_I^{(1)}(y) \Delta_I^{(1)}(x) \Gamma(\sqrt{\frac{pq}{t}} z^{\pm 1}x^{\pm 1}, \sqrt{\frac{pq}{t^{n-1}}} x^{\pm 1}y^{\pm 1}, -\sqrt{\frac{pq}{t^n}} v^{\pm 2}y^{\pm 1}) 
\\ & \qquad \times \prod_{r=1}^4 \Gamma( \sqrt{t}u_r z^{\pm 1},\sqrt{t}t_r z^{\pm 1}, \frac{\sqrt{pq}}{t_r} x^{\pm 1}, \frac{\sqrt{pq}}{u_r} x^{\pm 1}, t_rt^{\frac12(n-1)} y^{\pm 1})
\hat \Gamma( t^{\frac12n} y^{\pm 1})
\end{align*}
\end{lemma}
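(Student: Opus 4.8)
I would read the left-hand side as the $u_r$-generalization of the expression \eqref{eqIII} appearing in the proof of Theorem~\ref{thm71}: under the specialization $u_r\to\sqrt t,\sqrt{qt},\sqrt{pt},\sqrt{pqt}$ the factor $\prod_{r=1}^4\Gamma(\tfrac{\sqrt{pq}\,t}{u_r}z^{\pm1})$ collapses to $\hat\Gamma(\sqrt t\,z^{\pm1})$, which is exactly what sits inside \eqref{eqIII}. Structurally the claim compares a double integral (an $(n-1)$-dimensional Selberg integral over $z$ glued to a univariate $\hat\Delta$-integral over $y$) with a triple integral (an $(n-2)$-dimensional Selberg integral glued to two univariate integrals): the simultaneous gain of one integration and loss of one $z$-dimension is the fingerprint of the induction enabler. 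So the plan is to run the very string of transformations that the main calculation applies after \eqref{eqIII}, carrying the general parameters $u_r$ along, and to cap it with one further induction enabler.

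Concretely, after relabelling $z\to-z$ I would recognise the $z$-integral as a ten-parameter Selberg integral whose special pair $\sqrt{pq/t}\,(-y^2)^{\pm1}$ multiplies to $pq/t$, and apply the induction enabler (Proposition~\ref{propbluebox}) with $n\mapsto n-1$ (the balancing condition $t^{2(n-1)-3}\prod=(pq)^2$ holds), reducing $z$ to dimension $n-2$ and spawning a univariate variable $x$. This leaves the $y$-integral carrying $\hat\Delta_I^{(1)}(y)$, the four fixed arguments $-t_rt^{n-1/2}$, the newly created $x$-coupling, and the $\hat\Gamma$-term in $v$ — one $y^{\pm2}$-argument more than the plain univariate quadratic transformation can absorb. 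I would repackage the four fixed arguments by the elliptic beta integral read backwards (introducing an auxiliary variable $w$), leaving precisely the four parameters supplied by the $w$- and $x$-couplings, and then apply the univariate quadratic transformation in $y$ — the $n=1$ case of Theorem~\ref{thm71}, used as induction hypothesis. This flips $\hat\Delta\to\Delta$, turns the $v^{\pm1}$-term into a $v^{\pm2}$-term, and creates the factor $\hat\Gamma(t^{n/2}y^{\pm1})$; the outcome is the (doubly) generalized \eqref{eqVII}.

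From \eqref{eqVII} I would invoke the preceding lemma — the generalized equality of \eqref{eqVII} and \eqref{eqbbb}, which bundles a Selberg $W(E_7)$ move in $w$, an inverse induction enabler, and the $(n-1)$-dimensional almost-Selberg transformation — at its admissible specialization $b_r\to\sqrt t,\sqrt{qt},\sqrt{pt},\sqrt{pqt}$, thereby clearing $w$ and $x$ and landing on the generalized \eqref{eqbbb}. A final induction enabler in $z$ then produces the right-hand side: applied to the $z$-integral of \eqref{eqbbb} it sends the eight parameters to $\sqrt t\,t_r,\sqrt t\,u_r$ and $\tfrac{\sqrt{pq}}{t_r},\tfrac{\sqrt{pq}}{u_r}$, and the special pair to the coupling $\sqrt{pq/t^{n-1}}\,x^{\pm1}y^{\pm1}$, matching the stated formula. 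All the $\Gamma$-prefactors are gathered along the way and reduced using the two balancing conditions $t^{2n-1}\prod t_r=pq$ and $\prod u_r=t^2pq$.

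The step I expect to be the genuine obstacle is not any single transformation but justifying the repeated interchanges of integration order throughout this detour. Keeping the $u_r$ free (only $\prod u_r=t^2pq$) is exactly what guarantees that every path-product in the graph of Lemma~\ref{lemmero} stays out of $p^{\mathbb Z_{\le0}}q^{\mathbb Z_{\le0}}$, so Fubini applies at each stage; the naive computation specialized at $u_r\in\{\sqrt t,\sqrt{qt},\sqrt{pt},\sqrt{pqt}\}$ would instead drag a contour across apparent poles. The care therefore lies in confirming that the two endpoints — but none of the intermediate expressions — survive the specialization in $u_r$, which is precisely what makes this reformulation usable in the proof of Theorem~\ref{thm71}.
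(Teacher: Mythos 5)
Your proposal follows essentially the same route as the paper: run the main calculation from \eqref{eqIII} to \eqref{eqVII} with the $u_r$ kept general (relabel $z\to-z$, induction enabler in $z$, introduce $w$, apply the univariate quadratic transformation in $y$), then invoke the preceding lemma relating the generalized \eqref{eqVII} to \eqref{eqbbb} at the specialization $b_r\to\sqrt t,\sqrt{pt},\sqrt{qt},\sqrt{pqt}$ (and $v\to-v^2$), and finish with one more induction enabler in $z$. Your identification of why the $u_r$ must be left free — so that the Fubini/path-product criterion of Lemma \ref{lemmero} holds at every intermediate step while only the two endpoints need to survive the specialization — is exactly the point the paper makes.
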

\begin{proof}
We follow the main proof from \eqref{eqIII} to \eqref{eqVII}, subsequently apply the previous lemma specialized in $v\to -v^2$ and $b_r\to \sqrt{t}, \sqrt{pt}, \sqrt{qt}, \sqrt{pqt}$ (so $s\to t^{-\frac12n-\frac12}$), and finally apply an induction enabler. Note that Fubini is valid in all these equations, in particular both sides in the equation of the previous lemma pose no problems when specialized.
\end{proof}
Using this lemma, we can equate the desired right hand side of \eqref{eqthm71} to this doubly induction enabled version of the left hand side.
\begin{lemma}\label{lem74}
Under the balancing condition $t^{2n-1} \prod_{r=1}^4 t_r=pq$ we have
\begin{align*}
\prod_{i=0}^{n-1} & \prod_{1\leq r<s\leq 4} \Gamma(t^{2i} t_rt_s) \int \hat \Delta_{I\!I}^{(n)}(t;z) 
\prod_{r=1}^4 \Gamma(-t_r\sqrt{t} z^{\pm 2}) \hat \Gamma(\left( \frac{pq}{t^2}\right)^{\frac14} v^{\pm 1}z^{\pm 1})
\\ & =
\Gamma(t^n,t^{n-1}) 
\frac{ \hat \Gamma(t,\sqrt{pq}t)}{\Gamma(t,pqt)}
\prod_{1\leq r<s\leq 4} \Gamma(t_rt_s) 
\prod_{r=1}^4 \hat \Gamma(\sqrt{t} t_r)
\\ & \qquad \times 
\iiint \Delta_{I\!I}^{(n-2)}(t;z) \Delta_I^{(1)}(y) \Delta_I^{(1)}(x) \Gamma(\sqrt{\frac{pq}{t}} z^{\pm 1}x^{\pm 1}, \sqrt{\frac{pq}{t^{n-1}}} x^{\pm 1}y^{\pm 1}, -\sqrt{\frac{pq}{t^n}} v^{\pm 2}y^{\pm 1} ) 
\\ & \qquad \times \prod_{r=1}^4 \Gamma( \sqrt{t} t_rz^{\pm 1}, \frac{\sqrt{pq}}{t_r} x^{\pm 1}, t_rt^{\frac12(n-1)} y^{\pm 1}) 
\hat \Gamma(tz^{\pm 1}, \frac{1}{\sqrt{t}} x^{\pm 1}, t^{\frac12n} y^{\pm 1})
\end{align*}
\end{lemma}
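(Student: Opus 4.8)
The plan is to recognize the left-hand side of this lemma as precisely the right-hand side of \eqref{eqthm71}, and then to run the opening moves of the proof of Theorem \ref{thm71} followed by a single application of Lemma \ref{lem73}. First I would apply the induction enabler (Proposition \ref{propbluebox}) in the base $(\sqrt p,\sqrt q)$ to the integral $\int\hat\Delta_{I\!I}^{(n)}(t;z)\cdots$, after rewriting the four factors $\Gamma(-t_r\sqrt t\,z^{\pm2})$ as the eight factors $\hat\Gamma(\pm\sqrt{-t_r\sqrt t}\,z^{\pm1})$ (using $\Gamma(x)=\hat\Gamma(\pm\sqrt x)$) so that the eight-parameter hypothesis of Proposition \ref{propbluebox} applies; the $(\sqrt p,\sqrt q)$-balancing $t^{2n-3}\prod_a\hat t_a=pq$ reduces to $t^{2n-1}\prod_r t_r=pq$, the hypothesis of the lemma. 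I would then apply the induction hypothesis of Theorem \ref{thm71} in dimension $n-1$ to the inner $z$-integral, with $\tau_r=\sqrt{pq}/(t_r t^{n-1})$ in the role of the four parameters (its balancing $t^{2(n-1)-1}\prod_r\tau_r=pq$ follows from the lemma's balancing), converting the inner $\hat\Delta_{I\!I}^{(n-1)}$-integral back into a $\Delta_{I\!I}^{(n-1)}$-integral. This is exactly the passage performed in the main proof down to \eqref{eqIII}, so at this point the left-hand side has become the prefactor $\hat\Gamma(t^n)\prod_{k=1}^{n-1}\prod_r\hat\Gamma(t^{k-\frac12}t_r)\prod_{r<s}\Gamma(t_rt_s)$ times the double integral in \eqref{eqIII}.

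The key observation for the second step is that the factor $\hat\Gamma(\sqrt t\,z^{\pm1})$ appearing in \eqref{eqIII} is exactly the value of $\prod_{r=1}^4\Gamma(\tfrac{\sqrt{pq}\,t}{u_r}z^{\pm1})$ at $u_r\to\sqrt t,\sqrt{qt},\sqrt{pt},\sqrt{pqt}$, since $\hat\Gamma(x)=\Gamma(x,\sqrt q x,\sqrt p x,\sqrt{pq}x)$ and at this specialization $\prod_r u_r=pqt^2$, the second balancing condition of Lemma \ref{lem73}. Thus \eqref{eqIII} is literally the $u_r$-specialization of the left-hand side of Lemma \ref{lem73}, and I would apply that lemma to pass to (the $u_r$-specialization of) its right-hand side. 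On the right-hand side the same quadratic identity collapses $\prod_r\Gamma(\sqrt t\,u_r z^{\pm1})$ to $\hat\Gamma(t z^{\pm1})$ and $\prod_r\Gamma(\tfrac{\sqrt{pq}}{u_r}x^{\pm1})$ to $\hat\Gamma(\tfrac1{\sqrt t}x^{\pm1})$, so the integrand matches that of the claimed right-hand side of the present lemma term for term.

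What remains is to match the scalar prefactors, which is the only real computation. Combining the prefactor carried down to \eqref{eqIII} with the (specialized) prefactor of Lemma \ref{lem73}, the $\hat\Gamma(t^n)$ cancels $1/\hat\Gamma(t^n)$, leaving $\Gamma(t^n,t^{n-1})$ and $\prod_{r<s}\Gamma(t_rt_s)$; moreover $\prod_{r<s}\Gamma(u_ru_s)$ specializes to $\hat\Gamma(t,\sqrt{pq}t)/\Gamma(t,pqt)$. The surviving product $\prod_{k=1}^{n-1}\prod_r\hat\Gamma(t^{k-\frac12}t_r)$ times $\prod_{k=1}^{n-2}\prod_{r,s}\Gamma(\tfrac{pq}{t_r u_s t^{n-k-1}})\big|_{u\text{-spec}}=\prod_{k=1}^{n-2}\prod_r\hat\Gamma(\tfrac{\sqrt{pq}}{t_r t^{n-k-\frac12}})$ then telescopes: by the reflection equation $\hat\Gamma(x)\hat\Gamma(\sqrt{pq}/x)=1$ each factor of the second product equals $1/\hat\Gamma(t_r t^{n-k-\frac12})$, and after reindexing only the $k=1$ term of the first product survives, giving exactly $\prod_r\hat\Gamma(\sqrt t\,t_r)$. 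This reproduces the stated prefactor.

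Finally I would justify the manipulations. The two transformations used in the first step occur before the second induction enabler of the main proof, hence before the troublesome term $\hat\Gamma(-t^{1-\frac n2}z^{\pm1})$ whose length-$n$ loop through the $z_i$ has product $t^{(1-\frac n2)+(n-2)+(1-\frac n2)}=1$; that obstruction lives entirely inside Lemma \ref{lem73} and is already dealt with there, so Step A poses no Fubini problem. The one genuinely delicate point — the reason the whole $u_r$-generalized formulation of Lemma \ref{lem73} was introduced — is that the specialization $u_r\to\sqrt t,\sqrt{qt},\sqrt{pt},\sqrt{pqt}$ must avoid the zero-divisors of the denominators on both sides of that lemma, so that the meromorphic identity may be specialized term by term. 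Since both sides are regular at this point (as recorded when Lemma \ref{lem73} was stated), the specialization is legitimate. I expect this regularity check, rather than any of the gamma-function algebra, to be the main obstacle.
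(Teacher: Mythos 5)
Your proposal is correct and is essentially identical to the paper's own (very terse) proof: the paper likewise runs the main calculation down to \eqref{eqIII} and then applies Lemma \ref{lem73} specialized at $u_r\to\sqrt{t},\sqrt{pt},\sqrt{qt},\sqrt{pqt}$. Your explicit matching of the prefactors (the telescoping of $\prod_{k}\hat\Gamma(t^{k-\frac12}t_r)$ against $\prod_k\hat\Gamma(\sqrt{pq}/(t_rt^{n-k-\frac12}))$ and the identification $\prod_{r<s}\Gamma(u_ru_s)=\hat\Gamma(t,\sqrt{pq}t)/\Gamma(t,pqt)$) and of the legitimacy of the specialization simply fills in details the paper leaves implicit.
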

\begin{proof}
Us the calculation as given above until \eqref{eqIII}. Subsequently apply Lemma \ref{lem73} specialized in $u_r\to \sqrt{t}, \sqrt{pt}, \sqrt{qt}, \sqrt{pqt}$.
\end{proof}
Unfortunately we can't apply the induction enabler transformation to this last equation, as it would lead to an evaluation of an integral at an apparent pole, so we need to perform two induction enablers at once, as described by the following lemma.
\begin{lemma}
Under the balancing condition $t^{2n-3} \prod_{r=1}^8 t_r=(pq)^2$ we have
\begin{align*}
\int & \Delta_{I\!I}^{(n)}(t;z) \prod_{r=1}^8 \Gamma(t_rz^{\pm 1}) \Gamma(\sqrt{\frac{pq}{t}} v^{\pm 1}z^{\pm 1}) 
\\ & = 
\Gamma(t^n,t^{n-1}) \prod_{1\leq r<s\leq 4} \Gamma(t_rt_s) 
 \iiint \Delta_{I\!I}^{(n-2)}(t;z) \Delta_{I}^{(1)}(y) \Delta_{I}^{(1)}(x) 
 \\ & \qquad \times 
\Gamma(\sqrt{\frac{pq}{t}} z^{\pm 1}x^{\pm 1}, \sqrt{\frac{pq}{t^{n-1}}} y^{\pm 1}x^{\pm1}, \sqrt{\frac{pq}{t^n}} v^{\pm 1}y^{\pm 1})
\prod_{r=1}^8 \Gamma(\sqrt{t} t_r z^{\pm 1}, \frac{\sqrt{pq}}{t_r} x^{\pm 1}, t_rt^{\frac12(n-1)} y^{\pm 1})
\end{align*}
\end{lemma}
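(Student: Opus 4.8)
The plan is to derive this ``double induction enabler'' by applying the ordinary induction enabler, Proposition \ref{propbluebox}, twice in the variable $z$, dropping the dimension from $n$ to $n-1$ and then to $n-2$, with the univariate integral created at the first step serving as the spectral parameter of the second. First I would apply Proposition \ref{propbluebox} (i.e.\ $\stackrel{\boxtimes:z}{=}$) directly to the left-hand side. This produces the prefactor $\Gamma(t^n)\prod_{k=1}^{n-1}\prod_{1\le r<s\le 8}\Gamma(t^{k-1}t_rt_s)$ together with a double integral of an $(n-1)$-dimensional $z$-integral and a univariate $y$-integral coupled only through $\Gamma(\sqrt{pq/t}\,z^{\pm1}y^{\pm1})$; on the $z$-integral the eight half-edges have become $T_r:=\sqrt{pq}/(t_rt^{\frac12 n-1})$, while the $y$-integral carries the half-edges $t_rt^{\frac12(n-1)}$ and the displaced spectral pair $\Gamma(\sqrt{pq/t^n}\,v^{\pm1}y^{\pm1})$.

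The key observation is that, read as an integral in $z$ with $y$ held fixed, this inner $(n-1)$-dimensional integral is again exactly of the shape the enabler requires, with $y$ now playing the role of the distinguished parameter $v$: the eight half-edges are the $T_r$ and the sole remaining coupling is $\Gamma(\sqrt{pq/t}\,y^{\pm1}z^{\pm1})$. Before invoking the enabler again I would verify its balancing condition: from $\prod_r t_r=(pq)^2/t^{2n-3}$ one gets $\prod_r T_r=(pq)^2t^{5-2n}$, whence $t^{2(n-1)-3}\prod_r T_r=(pq)^2$, as needed. Applying $\stackrel{\boxtimes:z}{=}$ once more (with $n\mapsto n-1$, $t_r\mapsto T_r$) introduces the second univariate variable $x$, couples $z$ to $x$ by $\Gamma(\sqrt{pq/t}\,z^{\pm1}x^{\pm1})$, and couples $x$ to the old spectral variable $y$ by $\Gamma(\sqrt{pq/t^{n-1}}\,y^{\pm1}x^{\pm1})$. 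The transformed parameters collapse cleanly: the new $z$-half-edges are $\sqrt{pq}/(T_rt^{\frac12(n-1)-1})=\sqrt{t}\,t_r$ and the new $x$-half-edges are $T_rt^{\frac12(n-2)}=\sqrt{pq}/t_r$, while the entire $y$-integral is carried through unchanged. Collecting everything reproduces precisely the integrand on the right-hand side.

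It then remains to simplify the product of the two prefactors and to justify Fubini. Since $T_rT_s=pq/(t_rt_st^{n-2})$, one has $t^{k-1}T_rT_s=pq/(t^{n-1-k}t_rt_s)$, so the reflection equation $\Gamma(z,pq/z)=1$ turns the second enabler's contribution into $\prod_{j=1}^{n-2}\prod_{1\le r<s\le 8}\Gamma(t^jt_rt_s)^{-1}$ (setting $j=n-1-k$); this telescopes against $\prod_{k=1}^{n-1}\prod_{1\le r<s\le 8}\Gamma(t^{k-1}t_rt_s)$ from the first enabler, leaving exactly the exponent-zero term $\prod_{1\le r<s\le 8}\Gamma(t_rt_s)$, so the total prefactor is $\Gamma(t^n,t^{n-1})\prod_{1\le r<s\le 8}\Gamma(t_rt_s)$. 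For Fubini I would apply Lemma \ref{lemmero} to the resulting chain graph (the $z$-cluster, then $x$, then $y$): every chain edge carries a label of the form $\sqrt{pq/t^{\bullet}}$, so the label product along any closed path, or along any half-edge-to-half-edge path not involving a generic $t_r$ or $v$, carries a strictly positive power of $pq$ and hence never lies in $p^{\mathbb{Z}_{\le0}}q^{\mathbb{Z}_{\le0}}$; for generic $t_r$ the remaining paths are nonvanishing as well.

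The step I expect to be most delicate is the prefactor bookkeeping, as it is the reflection-equation telescoping that collapses the two nested products into a single product of $\Gamma(t_rt_s)$. I note that this natural computation produces the product over $1\le r<s\le 8$, not over $1\le r<s\le 4$; the wider product is moreover exactly what is required downstream, for when the lemma is specialized at $t_5,\dots,t_8=\sqrt t,\sqrt{qt},\sqrt{pt},\sqrt{pqt}$ the extra cross terms $\prod_{5\le r<s\le8}\Gamma(t_rt_s)$ and $\prod_{r=1}^4\prod_{s=5}^8\Gamma(t_rt_s)$ are precisely the factors $\tfrac{\hat\Gamma(t,\sqrt{pq}t)}{\Gamma(t,pqt)}\prod_{r=1}^4\hat\Gamma(\sqrt t\,t_r)$ appearing in Lemma \ref{lem74}. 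I would therefore read the stated $\prod_{1\le r<s\le4}$ as $\prod_{1\le r<s\le8}$.
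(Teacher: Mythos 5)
Your proof is exactly the paper's: the paper's own proof of this lemma is the single sentence ``This is the double iterate of the induction enabler transformation, Proposition \ref{propbluebox}'', and you have carried out precisely that iteration in full detail, with the correct balancing check $t^{2(n-1)-3}\prod_r T_r=(pq)^2$, the correct collapse of the transformed half-edge labels to $\sqrt{t}\,t_r$ and $\sqrt{pq}/t_r$, and the correct reflection-equation telescoping of the two prefactors. You are also right that the stated $\prod_{1\le r<s\le 4}\Gamma(t_rt_s)$ is a typo for $\prod_{1\le r<s\le 8}\Gamma(t_rt_s)$: the wider product is both what the double iteration produces and exactly what is needed when the lemma is specialized at $t_5,\dots,t_8=\sqrt t,\sqrt{qt},\sqrt{pt},\sqrt{pqt}$ to recover the prefactor $\frac{\hat\Gamma(t,\sqrt{pq}t)}{\Gamma(t,pqt)}\prod_{1\le r<s\le 4}\Gamma(t_rt_s)\prod_{r=1}^4\hat\Gamma(\sqrt t\,t_r)$ of Lemma \ref{lem74}.
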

\begin{proof}
This is the double iterate of the induction enabler transformation, Proposition \ref{propbluebox}.
\end{proof}
We can now finish the actual proof by starting with the right hand side of \eqref{eqthm71}, applying Lemma \ref{lem74} and finish by using the previous lemma specialized at $u_r\to \sqrt{t},\sqrt{pt},\sqrt{qt},\sqrt{pqt}$. 
\end{proof}

Finally we obtain the result from \cite[Conjecture Q1]{RainsLittlewood} in the case $\lambda=0$ as a corollary (to relate it closer to our theorem, we replaced $p$, $q$, and $t$ in the conjecture by their square roots).
\begin{cor}
Let $t_r\in \mathbb{C}^*$ satisfy the balancing condition $t^{n} \prod_{r=1}^4 t_r=-\sqrt{pq}$. Then we have
\[
\int \Delta_{I\!I}^{(n)}(t;z) \prod_{r=1}^4 \Gamma( t_r^2 z^{\pm 1}) \hat \Gamma(\sqrt{t} z^{\pm 1})
 = 
\prod_{i=0}^{2n-1} \prod_{1\leq r<s\leq 4} \hat \Gamma(-t^{\frac12 i} t_rt_s)
\int \hat \Delta_{I\!I}^{(n)}(t;z) \prod_{r=1}^4 \hat \Gamma( t_r z^{\pm 1}, \sqrt{t}t_r z^{\pm 1})
\]
\end{cor}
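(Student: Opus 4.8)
The plan is to obtain the corollary directly from the second quadratic transformation, Theorem~\ref{thm71}, by specializing its parameters so that its left-hand side coincides with the left-hand side here, and then cleaning up the resulting $(\sqrt p,\sqrt q)$-integral with the Selberg transformation~\eqref{eqtrafo2}. Concretely, I would apply Theorem~\ref{thm71} with its four parameters set to $t_1^2,t_2^2,t_3^2,t\,t_4^2$ and with its $v$ determined by $v^2=-\sqrt{pq/t}\,t_4^{-2}$, equivalently $-\sqrt{pq/t}\,v^2=pq/(t\,t_4^2)$. Then the reflection equation $\Gamma(w,pq/w)=1$ makes $\Gamma(t\,t_4^2z^{\pm1})$ cancel one half of $\Gamma(-\sqrt{pq/t}\,v^{\pm2}z^{\pm1})$, while the surviving half is $\Gamma(t_4^2z^{\pm1})$; the integrand of the theorem's left-hand side thereby collapses to exactly $\prod_{r=1}^4\Gamma(t_r^2z^{\pm1})\,\hat\Gamma(\sqrt t\,z^{\pm1})$. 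The theorem's balancing $t^{2n-1}t_1^2t_2^2t_3^2(t\,t_4^2)=pq$ is the square of the corollary's $t^n\prod_r t_r=-\sqrt{pq}$, so the specialization is consistent, and since it uses only the pole-free reflection identity it is valid as an identity of meromorphic functions.

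Applying Theorem~\ref{thm71} now converts the corollary's left-hand side into an explicit prefactor $\prod_{i=0}^{n-1}\prod_{r<s}\Gamma(t^{2i}T_rT_s)$ times a $(\sqrt p,\sqrt q)$-integral whose integrand contains $\prod_r\Gamma(-T_r\sqrt t\,z^{\pm2})\,\hat\Gamma(\sqrt{\sqrt{pq}/t}\,v^{\pm1}z^{\pm1})$. Using the doubling formula $\Gamma(w)=\hat\Gamma(\pm\sqrt w)$ I would rewrite each $\Gamma(-T_r\sqrt t\,z^{\pm2})=\hat\Gamma(\pm\sqrt{-T_r\sqrt t}\,z^{\pm1})$, exhibiting the right-hand side as a ten-parameter $\hat\Gamma$-Selberg integral whose two distinguished parameters multiply to $\sqrt{pq}/t$ (the base-$(\sqrt p,\sqrt q)$ analogue of $pq/t$). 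A short check shows one of these ten parameters pairs with another to a product $\sqrt{pq}$, so the $\hat\Gamma$-reflection $\hat\Gamma(w,\sqrt{pq}/w)=1$ removes that pair and leaves an eight-parameter $\hat\Gamma$-Selberg integral with parameters of the shape $\{\pm i\,t^{1/4}t_r\}_{r=1}^3\cup\{-i\,t^{3/4}t_4,\ i\,t^{-1/4}t_4\}$, the factors of $i$ arising as square roots of negative quantities.

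It remains to carry this eight-tuple to the corollary's right-hand parameters $\{t_r,\sqrt t\,t_r\}_{r=1}^4$. Both tuples satisfy the same eight-parameter balancing $t^{2(n-1)}\prod=pq$ for the base $(\sqrt p,\sqrt q)$, and they lie in a single $W(E_7)$-orbit: grouping the $+i$-type parameters against the $-i$-type ones, one application of the Selberg transformation~\eqref{eqtrafo2} with $s=i\,t^{1/4}$ sends the eight-tuple to $\{-t_r,-\sqrt t\,t_r\}_{r=1}^4$, and the substitution $z_j\to-z_j$ (under which $\hat\Delta_{II}^{(n)}(t;z)$ is invariant and every parameter flips sign) then yields $\{t_r,\sqrt t\,t_r\}_{r=1}^4$. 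Collecting the prefactors from Theorem~\ref{thm71} and from this Selberg step, and simplifying with the two doubling formulas $\Gamma(x)=\hat\Gamma(\pm\sqrt x)$ and $\hat\Gamma(x)=\Gamma(x,\sqrt q\,x,\sqrt p\,x,\sqrt{pq}\,x)$, should telescope to $\prod_{i=0}^{2n-1}\prod_{r<s}\hat\Gamma(-t^{i/2}t_rt_s)$.

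The step I expect to be the main obstacle is the sign- and branch-bookkeeping. The conversion $\Gamma(w)=\hat\Gamma(\pm\sqrt w)$ introduces the factors of $i$, and it is exactly their products $i t_r\cdot i t_s=-t_rt_s$ that produce the minus sign inside $\hat\Gamma(-t^{i/2}t_rt_s)$; keeping these branches mutually consistent (in particular so that the eight parameters genuinely satisfy the balancing with the correct sign, which is what forces the choices above) and verifying that the two eight-tuples are actually $W(E_7)$-equivalent rather than merely balancing-compatible is the delicate point. A secondary issue, in the spirit of Section~\ref{secFubini}, is to confirm that each specialization is taken away from the apparent poles of the intermediate meromorphic identities, so that all the equalities survive specialization.
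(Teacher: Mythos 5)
Your proposal is correct and follows essentially the same route as the paper: specialize Theorem \ref{thm71} with parameters $t_1^2,t_2^2,t_3^2,t\,t_4^2$ and $v$ chosen so that the reflection equation collapses the left-hand side (the paper takes $v=i t_4(t/pq)^{1/4}$, which is your $v^{-1}$, an immaterial difference by the $v\to v^{-1}$ symmetry), rewrite the right-hand side as an eight-parameter $\hat\Gamma$-Selberg integral with parameters $\{\pm i t^{1/4}t_r\}_{r=1}^3\cup\{-it^{3/4}t_4, it^{-1/4}t_4\}$, and apply the Selberg transformation \eqref{eqtrafo2} with $s=it^{1/4}$ followed by $z\to -z$. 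The grouping of parameters and the resulting tuple $\{-t_r,-\sqrt{t}\,t_r\}$ match the paper's proof exactly.
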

\begin{proof}
In Theorem \ref{thm71} we replace $t_r$ by $t_r^2$ for $r=1,2,3$, $t_4$ by $tt_4^2$, and $v$ by $i t_4 (t/pq)^{\frac14}$. This leads to the identity (where we simplified both sides by removing a pair of elliptic gamma functions using the reflection equation):
\begin{multline*}
\int \Delta_{I\!I}^{(n)}(t;z) 
\prod_{r=1}^4 \Gamma(t_r^2 z^{\pm 1})  
 \hat \Gamma(\sqrt{t} z^{\pm 1}) 
 \\ =
\prod_{i=0}^{n-1} \prod_{1\leq r<s\leq 3} \Gamma(t^{2i} t_r^2t_s^2)
\prod_{r=1}^3 \Gamma( t^{2i+1} t_r^2t_4^2)
\int \hat \Delta_{I\!I}^{(n)}(t;z)
 \prod_{r=1}^3 \Gamma(-t_r^2 \sqrt{t} z^{\pm 2}) 
\hat  \Gamma(- i t^{\frac34} t_4z^{\pm 1},i t_4 t^{-\frac14}  z^{\pm 1}) 
\end{multline*}
Now both sides of the equation are elliptic Selberg integrals with eight parameters, that is, we can use the Selberg transformation \eqref{eqtrafo2} on either side. In particular applying it on the right hand side with the group of four parameters being
$it_1t^{\frac14}$, $it_2 t^{\frac14}$, $it_3t^{\frac14}$, and $it_4^{-\frac14}$, gives us the desired equation.
\end{proof}

\end{document}